\colorlet{darkblue}{blue!50!black}
\colorlet{darkblue}{red!100!black}
\newtheorem{theorem}{Theorem}[section]
\newtheorem{lemma}[theorem]{Lemma}
\newtheorem{definition}[theorem]{Definition}
\newtheorem{remark}[theorem]{Remark}
\newtheorem{hypothesis}[theorem]{Hypothesis}
\DeclareMathOperator*{\esssup}{ess\,sup}
\let\originalleft\left
\let\originalright\right
\renewcommand{\left}{\mathopen{}\mathclose\bgroup\originalleft}
\renewcommand{\right}{\aftergroup\egroup\originalright}
\theoremstyle{definition}
\def\1{\mathcal{O}}
\def\t{t\wedge\tau_R^\e}
\def\s{s\wedge\tau_R^\e}
\def\T{T\wedge\tau_R^\e}
\def\tt{t\wedge\tau_{R,\e}^m}
\def\TT{T\wedge\tau_{R,\e}^m}
\def\vi{\widetilde}
\def\d{\mathrm{d}}
\def\I{\mathrm{I}}
\def\B{\mathrm{B}}
\def\A{\mathrm{A}}
\def\W{\mathrm{W}}
\def\R{\mathbb{R}}
\def\E{\mathbb{E}}
\def\H{\mathbb{H}}
\def\V{\mathbb{V}}
\def\e{\epsilon}
\def\2{\mathcal{E}}
\def\L{\mathrm{L}}
\def\u{\boldsymbol{u}}
\def\v{\boldsymbol{v}}
\def\n{\boldsymbol{n}}
\def\f{\boldsymbol{f}}
\def\C{\mathrm{C}}
\def\X{\mathbf{X}}
\def\Y{\mathbf{Y}} 
\def\F{\mathrm{F}}
\def\P{\mathbb{P}}
\def\N{\mathbb{N}}
\def\x{\boldsymbol{x}}
\def\y{\boldsymbol{y}}
\def\Z{\mathrm{Z}}
\def\U{\mathbb{U}}
\def\x{\boldsymbol{x}}
\def\a{\boldsymbol{a}}
\def\z{\boldsymbol{z}}
\def\b{\boldsymbol{b}}
\newcommand{\Addresses}{{
		\footnote{
			\noindent \textsuperscript{1,2}Department of Mathematics, Indian Institute of Technology Roorkee-IIT Roorkee,
			Haridwar Highway, Roorkee, Uttarakhand 247667, INDIA.\par\nopagebreak
			\noindent  \textit{e-mail:} \texttt{Manil T. Mohan: maniltmohan@ma.iitr.ac.in, maniltmohan@gmail.com.}
			
			\textit{e-mail:} \texttt{Ankit Kumar: akumar14@mt.iitr.ac.in.}
			
			\noindent \textsuperscript{*}Corresponding author.
			
			\textit{Keywords:} Stochastic partial differential equations, locally monotne, small time asymptotics, large deviation princple, Gaussian noise.
			
			Mathematics Subject Classification (2020): Primary 60H15, 60F10; Secondary 76S05, 35R60, 35Q35.

}}}
\begin{document}	
	
	\title[Small time asymptotics for a class of SPDE]{Small time asymptotics for a class of stochastic partial differential equations with fully monotone coefficients forced  by multiplicative Gaussian noise
		\Addresses}

	\author[A. Kumar and M. T. Mohan]
	{Ankit Kumar\textsuperscript{1} and Manil T. Mohan\textsuperscript{2*}}

	\maketitle

	\begin{abstract}
	The main goal of this article is to study  the effect of small, highly nonlinear, unbounded drifts (small time large deviation principle (LDP) based on exponential equivalence arguments)  for a  class of stochastic partial differential equations (SPDEs) with fully monotone coefficients driven by multiplicative Gaussian noise. The small time LDP obtained in this paper is applicable for various quasi-linear and semilinear SPDEs such as  porous medium equations,  Cahn-Hilliard equation, 2D Navier-Stokes equations,  convection-diffusion equation,  2D liquid crystal model,  power law fluids,  Ladyzhenskaya model,  $p$-Laplacian equations, etc., perturbed by multiplicative Gaussian noise.  
	\end{abstract}
	\section{Introduction}\label{Sec1}\setcounter{equation}{0}
	
	In this work, we analyze the small time asymptotics of the following class of stochastic partial differential equations (SPDEs) with fully local monotone coefficients in a  Gelfand triplet $\V\hookrightarrow\H\hookrightarrow\V^*$ driven by a multiplicative Gaussian noise:
	\begin{equation}\label{1.1}
		\left\{
		\begin{aligned}
			\d \Y(t)&=\A(t,\Y(t))\d t+\B(t,\Y(t))\d\W(t), \ \ t\in[0,T],\\
			\Y(0)&=\x\in \H,
		\end{aligned}
		\right.
	\end{equation}
where  $\H$ and $\V$ represent a separable Hilbert space and a reflexive Banach space, respectively such that the continuous  embedding of  $\V\hookrightarrow \H$ is dense. Let $\V^*$ and $\H^*(\cong \H)$ denote  the dual of the spaces $\V$ and $\H$, respectively. The norms of $\H,\V$ and $\V^*$ are denoted by $\|\cdot\|_\H,\|\cdot\|_\V$ and $\|\cdot\|_{\V^*}$, respectively. Let $(\cdot,\cdot)$ represent the inner product in the Hilbert space $\H$ and $\langle \cdot,\cdot\rangle$ denote the duality paring between $\V$ and $\V^*$. Also, we have $\langle \x,\y\rangle =(\x,\y)$, whenever $\x\in\H$ and $\y\in\V$. Let $\U$ be an another Hilbert space, and $\L_2(\U,\H)$ be the space of all Hilbert-Schimdt operators from $\U\to\H$ with the norm $\|\cdot\|_{\L_2}$ and the inner product $(\cdot,\cdot)_{\L_2}$.  The mappings 
	\begin{align*}
		\A:[0,T]\times\V\to\V^* \ \text{ and } \ \B:[0,T]\times\V\to\L_2(\U,\H),
	\end{align*}
are measurable and $\W(\cdot)$ is a $\U$-valued cylindrical Wiener process on a filtered probability space $(\Omega,\mathscr{F},\{\mathscr{F}_t\}_{t\geq 0},\P)$.	

\subsection{Assumptions} We start with a basic definition related to the operator $\A(\cdot,\cdot)$.
\begin{definition}\label{def1}
	An operator $\A$ from $\V$ to $\V^*$ is said to be \textsl{pseudo-monotone} if the following condition holds:  for any sequence $\{\u_n\}_{n\in\N}$ with the weak limit $\u$ in $\V$ and 
	\begin{align}\label{3.2}
		\liminf_{n\to\infty}		\langle \A(\u_n),\u_n-\u\rangle\geq 0,
	\end{align}imply that 
	\begin{align}\label{3.3}
		\limsup_{n\to\infty}	\langle \A(\u_n),\u_n-\v\rangle \leq \langle \A(\u),\u-\v \rangle, \ \text{ for all } \ \v\in\V.
	\end{align}
\end{definition}

\subsubsection{Hypothesis and solvability results} Let first discuss the solvability results for the problem \eqref{1.1}.  The system \eqref{1.1} with locally monotone coefficients is considered in the works \cite{WLMR1,WLMR2,WLMR3}, etc.  The global solvability of the system \eqref{1.1} is proved in \cite{MRSSTZ} under certain assumptions on the coefficients, which are listed below. 
\begin{hypothesis}\label{hyp1} Let $f\in\L^1(0,T;\R^+)$ and $\beta\in(1,\infty)$.
	\begin{itemize}
		\item[(H.1)] (\textsl{Hemicontinuity}). The map $\R\ni \lambda \mapsto \langle \A(t,\u_1+\lambda \u_2),\u\rangle\in\R$ is continuous for any $\u_1,\u_2,\u\in\V$ and for a.e. $t\in[0,T]$.
			\item[(H.2)] (\textsl{Local monotonicity}). There exist non-negative constants $\zeta$ and $C$ such that for any $\u_1,\u_2\in\V$ and a.e. $t\in[0,T]$, 
			\begin{align}\label{1.2}\nonumber
				2\langle \A(t,\u_1)-\A(t,\u_2),\u_1-\u_2\rangle &+\|\B(t,\u_1)-\B(t,\u_2)\|_{\L_2}^2\\& \leq \big(f(t)+\rho(\u_1)+\eta(\u_2)\big)\|\u_1-\u_2\|_\H^2, \\ \label{1.3}
				|\rho(\u_1)|+|\eta(\u_1)| &\leq C(1+\|\u_1\|_\V^\beta)(1+\|\u_1\|_\H^\zeta),
			\end{align} where $\rho$ and $\eta$ are two measurable functions from $\V$ to $\R$.
			\item[(H.3)]	\textsl{(Coercivity)}. There exists a positive constant $C$ such that for any $\u\in\V$ and a.e.  $t\in[0,T]$, 
		\begin{align}\label{1.4}
			\langle \A(t,\u),\u\rangle 
			\leq f(t)(1+\|\u\|_{\H}^2)-L_\A\|\u\|_{\V}^\beta.
		\end{align}
		\item[(H.4)] \textsl{(Growth)}. There exist non-negative constants  $\alpha$ and $C$ such that for any $\u\in\V$ and a.e. $t\in[0,T]$,
		\begin{align}\label{1.5}
			\|\A(t,\u)\|_{\V^*}^{\frac{\beta}{\beta-1}}\leq \big(f(t)+C\|\u\|_{\V}^\beta\big)\big(1+\|\u\|_{\H}^\alpha\big).
		\end{align} 
		\item[(H.5)]For any sequence $\{\u_n\}_{n\in\N}$ and $\u$ in $\V$ with $\|\u_n-\u\|_\H\to0$, as $n\to\infty$, we have 
		\begin{align}\label{1.6}
			\|\B(t,\u_n)-\B(t,\u)\|_{\L_2}\to 0, \  \text{ for a.e. }t\in[0,T]. 
		\end{align}
		Moreover, there exists $g\in \L^1(0,T;\R^+)$  such that for any $\u\in \V$ and a.e. $t\in[0,T]$, 
		\begin{align}\label{1.7}
			\|\B(t,\u)\|_{\L_2}^2\leq g(t)(1+\|\u\|_{\H}^2). 
		\end{align}
	\end{itemize}
\end{hypothesis}
Let us now recall the following definition of the solution.
	\begin{definition}
	Let $\big(\Omega,\mathscr{F},\{\mathscr{F}_t\}_{t\geq 0},\P\big)$	be a stochastic basis and $\x\in\H$. Then, \eqref{1.1} has a \textsl{probabilistically strong solution} if and only if there exists a progressively measurable process $\Y:[0,T]\times\Omega\to\H,\;\P$-a.s., with paths 
	\begin{align*}
		\Y(\cdot,\cdot)\in \C([0,T];\H)\cap \L^\beta(0,T;\V), \ \P\text{-a.s.,  for } \  \beta\in(1,\infty),
	\end{align*}and the following equality holds $\P$-a.s., in $\V^*$, for all $t\in[0,T]$, 
\begin{align*}
	\Y(t)=\x+\int_0^t\A(s,\Y(s))\d s+\int_0^t\B(s,\Y(s))\d \W(s). 
\end{align*}
\end{definition}Let us recall the well-posedness result from \cite{MRSSTZ}.
\begin{theorem}[Theorem 2.6,  \cite{MRSSTZ}]\label{thrm1}
Assume that Hypothesis \ref{hyp1} (H.1)-(H.6) hold and the embedding $\V\subset \H$ is compact.  Then, for any initial data $\x\in\H$, there exists a 	\textsl{probabilistically strong solution} to the system \eqref{1.1}. Furthermore, for any $p\geq 2$, the following energy estimate holds:
\begin{align}\label{1.8}
	\E\bigg[\sup_{t\in[0,T]}\|\Y(t)\|_\H^p\bigg]+\E\bigg[\int_0^T\|\Y(t)\|_\V^\beta\d t\bigg]^\frac{p}{2}<\infty.
\end{align}
\end{theorem}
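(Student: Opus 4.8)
\emph{Outline of the proof.} The plan is to construct a solution by a Galerkin scheme, to close uniform energy estimates from the coercivity bound (H.3), to pass to the limit using the compactness of $\V\hookrightarrow\H$, to identify the limiting drift through a Minty-type (pseudo-monotonicity) argument based on the hemicontinuity (H.1) and local monotonicity (H.2), and finally to promote the resulting martingale solution to a probabilistically strong one via pathwise uniqueness together with the Gyöngy--Krylov characterization.

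First I would fix a Galerkin basis $\{e_k\}_{k\in\N}\subset\V$ that is orthonormal in $\H$ and orthogonal in $\V$, write $\Pi_n$ for the associated projection onto $\H_n:=\mathrm{span}\{e_1,\dots,e_n\}$ (a contraction on $\H$, on $\V$ and on $\V^*$), and solve the finite-dimensional It\^o system
\begin{align*}
\d\Y_n(t)=\Pi_n\A(t,\Y_n(t))\,\d t+\Pi_n\B(t,\Y_n(t))\,\d\W_n(t),\qquad \Y_n(0)=\Pi_n\x ,
\end{align*}
where $\W_n$ is a finite-dimensional truncation of $\W$. By (H.1) and (H.5) the coefficients are continuous on $\H_n$, so a solution exists up to a possible explosion time; It\^o's formula applied to $\|\Y_n(t)\|_\H^p$, the coercivity (H.3) used to absorb $\langle\A(t,\Y_n),\Y_n\rangle$ (keeping the $\|\Y_n\|_\V^\beta$ term on the good side and an $f(t)(1+\|\Y_n\|_\H^2)$ term for Gronwall), the bound \eqref{1.7} together with the Burkholder--Davis--Gundy inequality for the stochastic term, and Gronwall's lemma rule out explosion and give \eqref{1.8} uniformly in $n$. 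Combined with (H.4) and \eqref{1.7}, this also bounds $\A(\cdot,\Y_n)$ in $\L^{\frac{\beta}{\beta-1}}(0,T;\V^*)$ and $\B(\cdot,\Y_n)$ in $\L^2(0,T;\L_2(\U,\H))$, in expectation.

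Next, since $\V\hookrightarrow\H$ is compact, I would extract uniform fractional time-regularity of $\Y_n$ from the equation and invoke an Aubin--Lions--Simon-type lemma together with the compact embedding to obtain tightness of the laws of $(\Y_n,\W_n)$ on a path space carrying the weak topology of $\L^\beta(0,T;\V)$, the norm topology of $\L^2(0,T;\H)$ and that of $\C([0,T];\V^*)$. By Prokhorov's theorem and the Jakubowski--Skorokhod representation theorem there are, on a new probability space, copies $(\widehat{\Y}_n,\widehat{\W}_n)$ converging almost surely to $(\widehat{\Y},\widehat{\W})$, with (along a subsequence) $\Pi_n\A(\cdot,\widehat{\Y}_n)\rightharpoonup\widehat{\A}$ and $\Pi_n\B(\cdot,\widehat{\Y}_n)\rightharpoonup\widehat{\B}$ weakly, and with $\widehat{\Y}$ solving $\d\widehat{\Y}=\widehat{\A}\,\d t+\widehat{\B}\,\d\widehat{\W}$, $\widehat{\Y}(0)=\x$. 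The identification $\widehat{\B}=\B(\cdot,\widehat{\Y})$ then follows at once from the strong $\L^2(0,T;\H)$ convergence (hence a.e.-$(t,\omega)$ convergence in $\H$) and \eqref{1.6}.

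The main obstacle is to show $\widehat{\A}=\A(\cdot,\widehat{\Y})$, since $\A$ is only locally/pseudo-monotone and Minty's trick cannot be applied directly on $\L^\beta(0,T;\V)$. I would apply It\^o's formula to $\|\widehat{\Y}_n(t)\|_\H^2$ and to $\|\widehat{\Y}(t)\|_\H^2$, each weighted by $\psi(t):=\exp\big(-\int_0^t(f(s)+\rho(\widehat{\Y}(s))+\eta(\varphi(s)))\,\d s\big)$, which is finite almost surely by \eqref{1.3} and \eqref{1.8}, subtract the two identities, and use the local monotonicity (H.2) together with weak lower semicontinuity of the resulting quadratic functional to reach
\begin{align*}
\E\int_0^T\psi(t)\,\big\langle\widehat{\A}(t)-\A(t,\varphi(t)),\,\widehat{\Y}(t)-\varphi(t)\big\rangle\,\d t\le 0
\end{align*}
for every adapted $\varphi\in\L^\beta(\Omega\times(0,T);\V)\cap\L^2(\Omega\times(0,T);\H)$. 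Choosing $\varphi=\widehat{\Y}-\lambda\, w\,\v$ with $\v\in\V$, $w$ a bounded adapted scalar, dividing by $\lambda>0$, letting $\lambda\downarrow0$ and invoking the hemicontinuity (H.1) --- this is exactly the role played by the pseudo-monotone structure of Definition~\ref{def1} --- yields $\widehat{\A}=\A(\cdot,\widehat{\Y})$, so $\widehat{\Y}$ is a martingale solution. Pathwise uniqueness follows by applying It\^o's formula to the $\H$-norm of the difference of two solutions and using (H.2), \eqref{1.3} and Gronwall; by the Gyöngy--Krylov characterization, existence of a martingale solution together with pathwise uniqueness produces a probabilistically strong solution on the prescribed stochastic basis, and the estimate \eqref{1.8} for it is inherited via Fatou's lemma.
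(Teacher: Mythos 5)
The theorem is not proved in the paper; it is recalled verbatim as Theorem 2.6 of the cited reference (R\"ockner--Shang--Zhang), so the comparison is with the argument there. Your outline matches their strategy at the level of structure --- Galerkin approximation, a priori bounds from (H.3) giving \eqref{1.8} uniformly, tightness and a Jakubowski--Skorokhod step exploiting the compact embedding $\V\hookrightarrow\H$, identification of the drift, and a Yamada--Watanabe / Gy\"ongy--Krylov upgrade combined with pathwise uniqueness from (H.2) to produce a probabilistically strong solution. The part of your sketch with a genuine gap is exactly the step you yourself flag as the main obstacle: identifying $\widehat{\A}=\A(\cdot,\widehat{\Y})$.

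The exponential-weight Minty argument you propose is the Liu--R\"ockner technique that works when the local monotonicity correction depends on only one of the two arguments, so that the weight is a function of the test process $\varphi$ alone and is unchanged through the $n\to\infty$ and $\lambda\downarrow 0$ limits. Under (H.2) the correction is $\rho(\u_1)+\eta(\u_2)$, and applying (H.2) to the pair $(\widehat{\Y}_n,\varphi)$ produces a term governed by $\rho(\widehat{\Y}_n)$, which is $n$-dependent; your weight $\psi$ uses $\rho(\widehat{\Y})$, leaving the uncontrolled mismatch $\big(\rho(\widehat{\Y}_n)-\rho(\widehat{\Y})\big)\|\widehat{\Y}_n-\varphi\|_\H^2$. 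Sending it to zero would require $\rho(\widehat{\Y}_n)\to\rho(\widehat{\Y})$ in a usable sense, but $\rho$ is assumed only measurable on $\V$ (cf.\ \eqref{1.3}), so the strong $\L^2(0,T;\H)$- and weak $\L^\beta(0,T;\V)$-convergence at your disposal do not suffice. The same difficulty reappears when you send $\lambda\downarrow 0$ in $\varphi=\widehat{\Y}-\lambda w\v$, because $\psi$ itself contains $\eta(\varphi)$ and no continuity of $\eta$ is available to pass the limit inside the exponential. This is precisely the obstruction that makes the fully local case nontrivial and led the cited paper to replace the weighted-Minty identification by a pseudo-monotonicity argument (Definition~\ref{def1}): one shows $\liminf_n\int_0^T\langle\A(t,\widehat{\Y}_n),\widehat{\Y}_n-\widehat{\Y}\rangle\,\d t\geq 0$ from the energy balances together with the strong $\H$-convergence furnished by the compact embedding, and then invokes pseudo-monotonicity pointwise in $(t,\omega)$ to identify the limit, with no $n$-dependent exponential weight entering. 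To salvage your sketch you would either have to impose continuity of $\rho$ and $\eta$ --- which Hypothesis~\ref{hyp1} does not grant --- or substitute the pseudo-monotonicity argument for the weighted-Minty step.
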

\subsection{Literature survey} In the past few decades, the large deviation theory received the required attention due to its applications in the areas like risk management, mathematical finance, fluid mechanics, quantum physics and statistical mechanics, etc. (cf.  \cite{ABPD1,ABPDVM,PLC,ADOZ,MTMLDPJMFM,SSSPS,SRSV1}, etc. and  references therein). The small time large deviation principle (LDP) basically focus on the asymptotic behaviors of the tails of a family of probability measures at a given point in the space when the time is very small. More specifically, it observes the limiting behavior of the solution in a time interval $[0,t]$ as $t$ tends to 0. An inspiration for considering such kind of problems arises from the following Varadhan-type small time asymptotics for the diffusion process $\X$:
\begin{align*}
	\lim_{t\to0}2t\log\P\big\{\X(0)\in\mathrm{E},\; \X(t)\in\mathrm{F}\big\}=-\vartheta^2(\mathrm{E},\mathrm{F}),
\end{align*}
where  $\vartheta$ is an appropriate Riemann distance associated with the diffusion.

 The celebrated theory of small time LDP by Varadhan \cite{SRSV2}  considered the small time asymptotics for the finite dimensional diffusion processes. The small time LDP for infinite dimensional diffusion process has been considered in the works \cite{ZQCSZFTSZ,MHKM,SRSV4,TSZ1}, etc. Many authors contributed in the direction of small time LDP for different types of SPDEs. The most significant results in this direction are the small time LDP for   stochastic 2D Navier-Stokes equations in \cite{TGXTSZ},  stochastic 3D tamed Navier-Stokes equations in \cite{MRTZ1},   stochastic 2D non-Newtonian fluids in \cite{HLCS}, stochastic quasi-geostrophic equations in the sub-critical case in \cite{WLMRXCZ}, 3D stochastic primitive equations in \cite{ZDRZ2}, scalar stochastic conservation laws in \cite{SLWLYX}, stochastic convective Brinkman-Forchheimer equations in \cite{MTMLDPJMFM}, a class of SPDEs with locally monotone coefficients, in \cite{SLWLYX},  stochastic Ladyzhenskaya-Smagorinsky equation in \cite{MTMLDS}, etc. Most of the works discussed above deal with the small time asymptotics  for semilinear type SPDEs except \cite{SLWLYX}.



\subsection{Novelties, difficulties and approaches}  The motivation for this paper comes form \cite{MRSSTZ}, where the authors established the well-posedness for a class of SPDEs with fully local monotone coefficients driven by a multiplicative Gaussian noise. Very recently, in the work \cite{AKMTM4}, the authors established  the well-posedness results   for a class of SPDEs with fully local monotone coefficients driven by L\'evy noise.  The Wentzell-Freidlin type LDP results for the same class of SPDEs driven by Gaussian and L\'evy noises have been discussed in \cite{TPSS,AKMTM5}, respectively. 
Our  aim of this work is to establish a small time LDP for the system \eqref{1.1} which covers a large class of physical models. 

The idea of the proof of our main result has been borrowed from \cite{TSZ1} (cf. \cite{SLWLYX}), which depends on the exponential equivalence arguments of measures. This exponential equivalence arguments is a very powerful tool and has been used by several authors to establish the small time LDP for different types of SPDEs (see \cite{YCHJGLLF,ZDRZ2,HLCS,WLMRXCZ,MRTZ1,TGXTSZ,ZDRZ3}, etc. and references therein). In this method, one has to consider the following  zero drift stochastic evolution equation
\begin{align}\label{12}
	\X^\e(t)=\x+\sqrt{\e}\int_0^t\B(\e s,\X^\e(s))\d \W(s),
\end{align}
 with the same initial data (see \eqref{2.3} below), where the small noise and small  time asymptotic problems are equivalent. Since the small noise LDP for the solution $\X^\e(\cdot)$ of the SPDE \eqref{12}     holds (Theorem 12.9, \cite{DaZ}), then our aim is to prove that the law of $\Y^\e(\cdot)$ and $\X^\e(\cdot)$ are exponentially equivalent (see Theorem 4.2.13, \cite{ADOZ} and \eqref{2.4} below).

In the  variational framework given in \cite{MRSSTZ}, the authors  considered the Gelfand triplet $\V\hookrightarrow\H\hookrightarrow\V^*$, where $\V$ is a reflexive Banach space such that the embedding $\V\hookrightarrow \H$ is compact. In this work, we consider $\V$ as a separable 2-smooth Banach space, as we are using the following Burkholder-Davis-Gundy type inequality  for $p>2$ (see subsection \ref{ss1.4} for the definition of $\gamma$)
\begin{align}\label{112}
	\E\left[\sup_{t\in[0,T]}\left\|\int_0^t\B(\e s,\X^{\e}(s))\d\W(s)\right\|_{\V}^p\right]\leq C\sqrt{p}\E\left[\left(\int_0^T\|\B(\e s,\X^{\e}(s))\|_{\gamma}^2\d s\right)^{\frac{p}{2}}\right],
\end{align}
with the sharp constant $p^{1/2}$  from Theorem 1.1, \cite{JS} to obtain the required estimate in $\V$-norm (see \eqref{2.21} below and we refer to \cite{SLWLYX} for similar techniques).  
Even though It\^o's formula in $2$-smooth Banach spaces are known (cf. Theorem A.1, \cite{ZBSP}, Theorem 2.1, \cite{ZB}, etc.), we are not using it to obtain the estimate \eqref{2.21} as the twice Fr\'echet differentiabilty of the norm in $\V$ is not known.  The  examples of 2-smooth Banach spaces include $\mathbb{L}^p$ spaces with $p\geq2$ and the Sobolev spaces $\mathbb{W}^{s,p}$ with $s\geq 1$ and $p\geq2$ (see \cite{JNMR} for more examples).  Thus, our main result  is applicable to different types of SPDEs such as 2D Navier-Stokes equations, fast-diffusion equations,  porous media equations, $p$-Laplacian equations, Allen-Cahn equations, Burgers equations, 2D Boussinesq system, 3D Leray-$\alpha$ model, 2D Boussinesq model for the Benard convection, 2D magneto-hydrodynamic equations, 2D magnetic B\'enard equations, shell models of turbulence (Sabra, Goy, Dyadic), power law fluids, the Ladyzhenskaya model, 3D tamed Navier-Stokes equations, the Kuramoto-Sivashinsy equations, and many more. 

The major differences with the work \cite{SLWLYX} are 
\begin{enumerate}
	\item [(1)] The local monotonicity coefficients of  $\A(\cdot,\cdot)$  in \eqref{1.1} is allowed to  depend on both the variables $\u_1$ and $\u_2$ (cf. \eqref{1.2} and \eqref{1.3}). This covers many interesting examples like Cahn-Hilliard equation (subsection \ref{ss3.1}), Quasilinear SPDEs (subsection \ref{ss3.3}), 2D Liquid crystal model (subsection \ref{ss3.5}), etc. 
	\item [(2)] The function $f$ appearing in \eqref{1.2} and \eqref{1.5} is assumed to be in $\mathrm{L}^1(0,T;\R^{+})$, whereas in \cite{SLWLYX} it is assumed to be a constant. 
	\item [(3)] The functions $g_1,h$ and $L_B$ appearing in \eqref{1.09}, \eqref{1.10} and \eqref{1.11} below are assumed to be in $\mathrm{L}^{\infty}(0,T;\R^+)$, while they are assumed to be constant in \cite{SLWLYX}. 
\end{enumerate}
We also remark that the results of this work may be extended to 2-smooth UMD (unconditional martingale differences) Banach spaces   (see discussion after Theorem 4.9, \cite{JNMR1}). Even though the maximal inequalities are known (cf. Theorem 1.1, \cite{IY}),  the difficulty in considering UMD Banach spaces is that an estimate similar to \eqref{112} with the sharp constant $p^{1/2}$ is not known in the literature. 
\subsection{Main result} \label{ss1.4}
Our main aim in this work  is to analyze the small time LDP for the solutions of the system \eqref{1.1} with fully local monotone coefficients.  We need to find an estimate for the maximal inequality for  the stochastic integral $\int_0^t\B(s,\cdot(s))\d\W(s)$ in the Banach space $\V$ in an appropriate way.  Therefore, we assume that the space $\V$ is a 2-smooth Banach space. Let $\gamma(\U,\V)$ be the space of all $\gamma$-radonifying operators from $\U$ to $\V$. Let us recall that  if $\mathrm{T}\in\gamma(\U,\V)$ , then the series 
\begin{align*}
	\sum_{j\in\N}\gamma_j \mathrm{T}(\mathbf{e}_j) \text{ converges in } \L^2(\vi{\Omega};\V),
\end{align*}for any sequence $\{\gamma_j\}_{j\in\N}$ of independent Gaussian real-valued random variables defined on a probability space $(\vi{\Omega},\vi{\mathscr{F}},\vi{\P})$ and any orthonormal family $\{\mathbf{e}_j\}_{j\in\N}$ of the Hilbert space $\U$. Then, the  space $\gamma(\U,\V)$ is endowed with the norm 
\begin{align*}
	\|\mathrm{T}\|_{\gamma(\U,\V)} :=\bigg\{\vi{\E}\bigg\|	\sum_{j\in\N }\gamma_j \mathrm{T}(\mathbf{e}_j)\bigg\|_\V^2\bigg\}^\frac{1}{2},
\end{align*}which does not  depend on the choice of $\{\gamma_j\}_{j\in\N}, \{\mathbf{e}_j\}_{j\in\N}$, and form a  Banach space. For convenience we denote the norm of the space $\gamma(\U,\V)$ by $\|\cdot\|_\gamma$ instead of $\|\cdot\|_{\gamma(\U,\V)}$.

\begin{remark}
	It should be noted that if  $\V$ a separable Hilbert space, then $\V$ is 2-smooth, which implies $\gamma(\U,\V)$ consists of all Hilbert-Schmidt operators from $\U$ to $\V$, and $\|\cdot\|_\gamma=\|\cdot\|_{\L_2}$ (see Example 2.8, \cite{XCZ}). Also, examples of 2-smooth Banach spaces include  every Hilbert space, $\mathbb{L}^p$  with $p\geq 2$ and Sobolev spaces $\mathbb{W}^{s,p}$ with $p\geq 2$ and $s\geq1$ (cf. \cite{JNMR}).
\end{remark}
In order to prove small time LDP for the solutions of the system \eqref{1.1}, we need the following  additional assumptions on the noise coefficients:
\begin{hypothesis}\label{hyp2}The coefficients $\A$ and $\B$ satisfy the following assumptions:
	\begin{itemize}
		\item[(H.2)$^*$] (\textsl{Local monotonicity}). There exist non-negative constants $\zeta$,  $C$  and $\xi>0$ such that for any $\u_1,\u_2\in\V$ and a.e. $t\in[0,T]$, 
		\begin{align}\label{1.9}
			2\langle \A(t,\u_1)-\A(t,\u_2),\u_1-\u_2\rangle\nonumber &+\|\B(t,\u_1)-\B(t,\u_2)\|_{\L_2}^2+\xi\|\u_1-\u_2\|_\V^\beta\\& \leq \big(f(t)+\rho(\u_1)+\eta(\u_2)\big)\|\u_1-\u_2\|_\H^2,
		\end{align} where $\rho$ and $\eta$ are the two measurable functions from $\V$ to $\R$ defined in \eqref{1.3}.
	\item[(H.6)] (\textsl{Growth}). There exist functions $g_1,h\in\L^\infty(0,T;\R^+)$ such that for any $\u\in\V$ and a.e. $t\in[0,T]$, 
	\begin{align}\label{1.09}
		\|\B(t,\u)\|_{\L_2}^2 \leq g_1(t)\big(1+\|\u\|_\H^2\big),
	\end{align}and
\begin{align}\label{1.10}
\|\B(t,\u)\|_\gamma^2\leq h(t)\big(1+\|\u\|_\V^2\big).
\end{align}	
Moreover, there exists a function $L_\B\in \L^\infty(0,T;\R^+)$ such that for any $\u_1,\u_2\in\V$ and a.e. $t\in[0,T]$,
		\begin{align}\label{1.11}
			\|\B(t,\u_1)-\B(t,\u_2)\|_{\L_2}^2\leq L_\B(t)\|\u_1-\u_2\|_\H^2.
		\end{align}
	\end{itemize}
\end{hypothesis}
\begin{remark}
	Note that Hypothesis \ref{hyp2} (H.2)$^*$ and (H.6) are stronger than   Hypothesis \ref{hyp1} (H.2)  and (H.5), respectively. 
\end{remark}
\subsubsection{Large deviation principle}
Let $(\mathcal{E},\nu)$ be a Polish space. We are given a family of probability measures $\{{\varrho_\e}\}_{\e>0}$ on $\mathcal{E}$ and a lower semicontinuous function $\I:\mathcal{E}\to[0,+\infty]$, not identically equals to infinity and such that its level sets, 
$$ K_M:=\{\ell\in\mathcal{E}:\I(\ell)\leq M\},\ \ M>0,$$ are compact for any $M\in[0,+\infty]$. The family $\{\varrho_\e\}_{\e>0}$ is said to satisfy the LDP or to have a large deviation property with respect to the rate function $\I(\cdot)$ if 
\begin{itemize}
	\item[(1)] for any closed set $\F \subset  \mathcal{E}$, we have
	\begin{align*}
		\limsup_{\e\to0} \e \log \varrho_\e (\F) \leq -\inf_{\ell\in\F}\I(\ell),
	\end{align*}
	\item[(2)] for any open set $\mathrm{G} \subset   \mathcal{E}$, we have
	\begin{align*}
		\liminf_{\e\to0} \e \log \varrho_\e (\mathrm{G}) \geq -\inf_{\ell\in\mathrm{G}}\I(\ell).
	\end{align*}
\end{itemize}

Define a functional $\I(\ell)$ on $\C([0,T];\H)$ by 
\begin{align}\label{1.12}
	\I(\ell):=\inf_{\ell\in\Lambda_k}\bigg\{\frac{1}{2}\int_0^T\|\dot{\ell}(t)\|_\U^2\d t\bigg\},
\end{align}where 
\begin{align*}
	\Lambda_k=\bigg\{& \ell\in \C([0,T];\H): \ell(\cdot) \text{ is absolutely continuous and such that }\\&\quad k(t)=x+\int_0^t\B(s,k(s))\dot{\ell}(s)\d s,\ t\in[0,T]  \bigg\}.
\end{align*}
For $\e>0$, we aim to study the probabilistic asymptotic behavior for small time process $\{\Y^{\e}(t)\}_{t\geq 0}=\{\Y(\e t)\}_{t\geq 0}$ as $\e\to 0$. Let us now state the  main result of this paper.
\begin{theorem}\label{thrm2}
	Let $\varrho_{\x}^{\e}$ be the law of $\Y^\e(\cdot)$ in $\C([0,T];\H)$. Then  under Hypotheses \ref{hyp1} and \ref{hyp2},  $\varrho_{\x}^{\e}$ satisfies an LDP with the rate function $\I(\cdot)$, that is, 
	\begin{itemize}
		\item[(1)] for any closed set $\F \subset  \C([0,T];\H)$,
		\begin{align*}
			\limsup_{\e\to0} \e \log \varrho_{\x}^{\e} (\F) \leq -\inf_{\ell\in\F}\I(\ell),
		\end{align*}
		\item[(2)] for any open set $\mathrm{O} \subset  \C([0,T];\H)$,
		\begin{align*}
			\liminf_{\e\to0} \e \log \varrho_{\x}^{\e} (\mathrm{O}) \geq -\inf_{\ell\in\mathrm{O}}\I(\ell).
		\end{align*}
	\end{itemize}
\end{theorem}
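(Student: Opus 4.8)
The plan is to prove Theorem~\ref{thrm2} by the exponential-equivalence scheme of \cite{TSZ1,SLWLYX}. First, rescaling time via $\Y^\e(t):=\Y(\e t)$ and using the substitution $s\mapsto\e s$ in the Bochner integral together with the Brownian scaling $\widetilde\W(\cdot):=\e^{-1/2}\W(\e\cdot)$ (again a $\U$-cylindrical Wiener process), one checks that $\Y^\e$ is a probabilistically strong solution of
\begin{align*}
	\Y^\e(t)=\x+\e\int_0^t\A(\e s,\Y^\e(s))\,\d s+\sqrt\e\int_0^t\B(\e s,\Y^\e(s))\,\d\widetilde\W(s),\qquad t\in[0,T],
\end{align*}
so that $\varrho_\x^\e$ is the law of $\Y^\e$ on $\C([0,T];\H)$ and the small-time problem for $\Y$ turns into a joint small-drift/small-noise problem for $\Y^\e$. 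Next I would introduce the zero-drift comparison process $\X^\e$ of \eqref{12}; since \eqref{1.09} and \eqref{1.11} make $\B(t,\cdot)$ extend to a globally Lipschitz map $\H\to\L_2(\U,\H)$ of linear growth, this equation is well posed with paths in $\C([0,T];\H)$, and the classical small-noise LDP (Theorem~12.9, \cite{DaZ}) shows that the law of $\X^\e$ on $\C([0,T];\H)$ satisfies an LDP with rate function $\I(\cdot)$ given by \eqref{1.12}. By Theorem~4.2.13 of \cite{ADOZ} it then suffices to prove the exponential equivalence
\begin{align*}
	\limsup_{\e\to0}\e\log\P\Big(\sup_{t\in[0,T]}\|\Y^\e(t)-\X^\e(t)\|_\H>\delta\Big)=-\infty\qquad\text{for every }\delta>0.
\end{align*}

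To this end, set $\mathbf{Z}^\e:=\Y^\e-\X^\e$, so that $\mathbf{Z}^\e(0)=0$ and
\begin{align*}
	\mathbf{Z}^\e(t)=\e\int_0^t\A(\e s,\Y^\e(s))\,\d s+\sqrt\e\int_0^t\big(\B(\e s,\Y^\e(s))-\B(\e s,\X^\e(s))\big)\,\d\widetilde\W(s),
\end{align*}
and localise by $\tau_R^\e:=\inf\{t\ge0:\|\Y^\e(t)\|_\H^2+\|\X^\e(t)\|_\H^2+\int_0^t(\|\Y^\e(s)\|_\V^\beta+\|\X^\e(s)\|_\V^\beta)\,\d s>R\}$. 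On $[0,t\wedge\tau_R^\e]$ I would apply It\^o's formula for $\|\cdot\|_\H^2$ to $\mathbf{Z}^\e$ — carried out along Galerkin approximations and passed to the limit via the monotone structure, since $\X^\e$ has only $\H$-valued paths — add and subtract $\A(\e s,\X^\e(s))$ in the drift pairing, and use the strong local monotonicity \eqref{1.9}, whose extra coercive term $\xi\|\cdot\|_\V^\beta$ absorbs the $\V$-norm produced when bounding $\langle\A(\e s,\X^\e(s)),\mathbf{Z}^\e(s)\rangle$ through the growth estimate \eqref{1.5}, together with the Lipschitz bound \eqref{1.11} and \eqref{1.3}. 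On $\{\tau_R^\e>T\}$ this yields, for small $\e$, a stochastic Gronwall inequality in which $\sup_{t\in[0,T\wedge\tau_R^\e]}\|\mathbf{Z}^\e(t)\|_\H^2$ is dominated by $\e$ times a deterministic constant $C_R$, plus $\e\int_0^{T\wedge\tau_R^\e}g_R(s)\|\mathbf{Z}^\e(s)\|_\H^2\,\d s$ with $\int_0^T g_R(s)\,\d s<\infty$, plus a martingale whose quadratic variation is at most $4\e\|L_\B\|_{\L^\infty}\int_0^{T\wedge\tau_R^\e}\|\mathbf{Z}^\e(s)\|_\H^4\,\d s$. The decisive point is that the drift correction and the martingale part of $\mathbf{Z}^\e$ are controlled by $\mathbf{Z}^\e$ itself (since $\B$ is Lipschitz), so that $\mathbf{Z}^\e$ is genuinely \emph{of order $\e$} rather than $\sqrt\e$; a moment iteration (equivalently, an exponential martingale argument) then gives, for each fixed $R$, bounds on $\sup_{t\in[0,T]}\|\mathbf{Z}^\e(t\wedge\tau_R^\e)\|_\H$ strong enough that $\limsup_{\e\to0}\e\log\P(\sup_{t\in[0,T]}\|\mathbf{Z}^\e(t\wedge\tau_R^\e)\|_\H>\delta)=-\infty$. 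Finally, It\^o's formula for $\|\Y^\e(t)\|_\H^2$ (using the coercivity \eqref{1.4} and growth \eqref{1.09}) and for $\|\X^\e(t)\|_\H^2$ (using \eqref{1.09}), together with the $\V$-valued Burkholder--Davis--Gundy inequality \eqref{112} and \eqref{1.10} to control the $\V$-norm contributions, provides uniform-in-$\e$ tail estimates for $\sup_{t\in[0,T]}\|\Y^\e(t)\|_\H$, $\sup_{t\in[0,T]}\|\X^\e(t)\|_\H$ and $\int_0^T(\|\Y^\e(s)\|_\V^\beta+\|\X^\e(s)\|_\V^\beta)\,\d s$, hence $\lim_{R\to\infty}\limsup_{\e\to0}\e\log\P(\tau_R^\e\le T)=-\infty$. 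Combining this with $\P(\sup_{t\in[0,T]}\|\mathbf{Z}^\e(t)\|_\H>\delta)\le\P(\sup_{t\in[0,T]}\|\mathbf{Z}^\e(t\wedge\tau_R^\e)\|_\H>\delta)+\P(\tau_R^\e\le T)$ and letting first $\e\to0$ and then $R\to\infty$ completes the exponential equivalence, so Theorem~\ref{thrm2} follows from Theorem~4.2.13 of \cite{ADOZ}.

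The hard part will be the exponential-equivalence estimate above: one must control $\mathbf{Z}^\e$ in the metric of $\C([0,T];\H)$ with a decay rate strong enough to yield $-\infty$ — rather than merely a finite negative number — in $\limsup_{\e\to0}\e\log\P(\cdots)$, while the drift $\A$ is unbounded and only locally monotone and the comparison process $\X^\e$ carries just $\H$-valued regularity. This is what forces the argument to be run along Galerkin truncations using the monotone structure, to exploit the exact scaling ($\e$ on the drift, $\sqrt\e$ on the noise) together with the self-control \eqref{1.11} of the noise of $\mathbf{Z}^\e$ by $\mathbf{Z}^\e$, and to keep every constant appearing in the localisation uniform in $\e$ — which is precisely where the extra strength of Hypothesis~\ref{hyp2} enters (the coercive term $\xi\|\cdot\|_\V^\beta$ in (H.2)$^*$, the $\L^\infty(0,T)$ rather than merely integrable bounds in (H.6), and the sharp constant in the $\V$-valued inequality \eqref{112}).
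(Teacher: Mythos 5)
Your overall scheme (rescale time, compare with the zero--drift process $\X^\e$, invoke Theorem~12.9 of \cite{DaZ} for $\X^\e$, reduce to exponential equivalence via Theorem~4.2.13 of \cite{ADOZ}) is exactly what the paper does, and the individual ingredients you list — coercivity for $\Y^\e$, $(\mathrm{H.2})^*$ to absorb the $\V$-norm, \eqref{1.5} for the correction term, \eqref{1.11} for the noise difference, the $\V$-valued BDG inequality \eqref{112} — are all used. But there is a genuine gap in the way you try to run the direct comparison between $\Y^\e$ and $\X^\e$, and the gap is precisely the reason the paper is organised around an approximation of the initial datum rather than a single-pass estimate.

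The problem is that $\X^\e$, being the solution of $\X^\e(t)=\x+\sqrt\e\int_0^t\B(\e s,\X^\e(s))\,\d\W(s)$ with $\x\in\H$, has paths only in $\C([0,T];\H)$; there is no $\V$-regularity whatsoever. Consequently the quantity $\A(\e s,\X^\e(s))$ is simply not defined (it requires $\X^\e(s)\in\V$), and the term $\langle\A(\e s,\X^\e(s)),\mathbf{Z}^\e(s)\rangle$ that you want to bound via the growth estimate \eqref{1.5} has no meaning. For the same reason your localisation $\tau_R^\e$ involves $\int_0^t\|\X^\e(s)\|_\V^\beta\,\d s$, which is $+\infty$ a.s., so the stopping time degenerates. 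Running the argument ``along Galerkin truncations and passing to the limit via the monotone structure'' does not rescue this: at the Galerkin level everything is finite, but the limiting bound needs $\|\X^\e(s)\|_\V^\beta$ to stay finite, which it does not. Monotonicity can remove the need to evaluate $\A$ on a weak limit in the \emph{drift} of the equation being solved, but here you need a \emph{quantitative} bound on a cross term $\langle\A(\cdot,\X^\e),\Y^\e-\X^\e\rangle$ for a process $\X^\e$ that is not in the domain of $\A$ at all.

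The paper's fix is not cosmetic: it first takes $\x_m\in\V$ with $\x_m\to\x$ in $\H$ (possible since $\V\hookrightarrow\H$ densely), so that the comparison process $\X_m^\e$ with initial datum $\x_m$ genuinely has $\V$-valued paths (this is where $(\mathrm{H.6})$, in particular $\|\B(t,\u)\|_\gamma^2\le h(t)(1+\|\u\|_\V^2)$, and the $2$-smoothness of $\V$ enter, via Lemma~\ref{lemma2}). Then $\A(\e s,\X_m^\e(s))$ is well defined, the localisation in the $\V$-norm of $\X_m^\e$ makes sense, and the estimate you sketched goes through for the pair $(\Y_m^\e,\X_m^\e)$ (Lemma~\ref{lemma6}). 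The price is that one must also prove exponential closeness of $\Y_m^\e$ to $\Y^\e$ (Lemma~\ref{lemma4}, via \eqref{1.2} and \eqref{1.11}, using the uniform moment bound of Lemma~\ref{lemma3}) and of $\X_m^\e$ to $\X^\e$ (Lemma~\ref{lemma5}), and then assemble \eqref{2.4} by a three-term triangle inequality. So the initial-data regularisation and the two extra approximation lemmas are the missing idea, not an implementation detail; without them the key cross term in your It\^o computation is ill-posed.
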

We prove the above theorem in the subsequent section.

\subsection{Organization of the paper}The rest of the article is organized as follows: 
A proof of Theorem \ref{thrm2} is discussed in section \ref{Sec2}. In order to prove Theorem \ref{thrm2}, we provide a sufficient condition \eqref{2.4} (see Theorem 4.2.13, \cite{ADOZ}) on the exponentially equivalency of the probability measures. The proof of the sufficient condition \eqref{2.4} has been divided into a series of Lemmas \ref{lemma1},  \ref{lemma2},  \ref{lemma3},  \ref{lemma4},  \ref{lemma5},   and  \ref{lemma6}, which lead to the proof of our main Theorem \ref{thrm2}. In the final section \ref{Sec3}, we apply Theorem \ref{thrm2} to some important  models like  Cahn-Hilliard equation (subsection \ref{ss3.1}),  2D Navier-Stokes equations (subsection \ref{ss3.2}), Quasilinear SPDEs (subsection \ref{ss3.3}), convection-diffusion equation (subsection \ref{ss3.4}) and 2D Liquid crystal model (subsection \ref{ss3.5}).

\section{Proof of Theorem \ref{thrm2}}\label{Sec2}\setcounter{equation}{0}
In this section, we establish the small time asymptotics for the solutions of the system \eqref{1.1} by establishing an LDP for small time, which is based on the exponential equivalence arguments. Under Hypotheses \ref{hyp1} and \ref{hyp2}, by Theorem \ref{thrm1}, the system \eqref{1.1} has a unique strong solution  with paths in $\C([0,T];\H)\cap \L^\beta(0,T;\V)$, for $\beta\in(1,\infty)$, $\mathbb{P}$-a.s., and the following holds:
\begin{align}\label{2.1}
		\Y(t)=\x+\int_0^t\A(s,\Y(s))\d s+\int_0^t\B(s,\Y(s))\d \W(s),
\end{align}in $\V^*$, for all $t\in[0,T],\ \P$-a.s. Our aim is to prove an LDP for the family $\{\Y(\e \cdot):\e\in(0,1]\}$ for the solution of the system \eqref{1.1}. Using the scaling property of Wiener process, it is easy to see  that the law of $\Y^{\e}(\cdot)=\Y(\e \cdot)$ is the law of the solution of the following stochastic evolution equation:
 \begin{align}\label{2.2}
 	\Y^\e(t)=\x+\e\int_0^t\A(\e s,\Y^\e(s))\d s+\sqrt{\e}\int_0^t\B(\e s,\Y^\e(s))\d \W(s),
 \end{align}in $\V^*$, for all $t\in[0,T],\ \P$-a.s.
\begin{proof}[Proof of  Theorem \ref{thrm2}]
	Let us denote the solution $\X^\e(\cdot)$ for the following stochastic evolution equation:
	\begin{align}\label{2.3}
		\X^\e(t)=\x+\sqrt{\e}\int_0^t\B(\e s,\X^\e(s))\d \W(s),
	\end{align}in $\V^*$, for all $t\in[0,T]$, and we express the law by $\vi{\varrho}_{\x}^\e$ of $\X^\e(\cdot)$ on the space $\C([0,T];\H)$. By Theorem 12.9, \cite{DaZ}, we note that $\X^\e(\cdot)$ satisfies the LDP with the rate function $\I(\cdot)$. Our aim is to show that two families of the measures $\varrho_{\x}^\e$ and  $\vi{\varrho}_{\x}^\e$ are exponentially equivalent, that is, for any $\varpi>0$, 
\begin{align}\label{2.4}
	\lim_{\e\to0}\e\log\P\bigg\{\sup_{t\in[0,T]}\|\Y^\e(t)-\X^\e(t)\|_\H^2>\varpi \bigg\}=-\infty.
\end{align}In view of Theorem 4.2.13, \cite{ADOZ}, if we are able to show that \eqref{2.4} holds then we are done. Since Theorem 4.2.13, \cite{ADOZ} says that if we have two exponentially equivalent families of probability measures and LDP holds for one of them, then it holds for the other probability measure also.
\end{proof}

Now, our main focus is to establish \eqref{2.4} only. We divide the proof into a series of Lemmas. The following lemma provides an estimate of the probability that the solution of \eqref{2.2}  leaves an energy ball.

\begin{lemma}\label{lemma1}
	For any $p\geq 2$, we have 
	\begin{align}\label{2.5}
		\lim_{R\to\infty}\sup_{\e\in(0,1]}\e\log\P\bigg\{\bigg(|\Y^\e|_\H^\V(T)\bigg)^p>R\bigg\}=-\infty,
	\end{align}where 
\begin{align*}
	\bigg(|\Y^\e|_\H^\V(T)\bigg)^p=\sup_{t\in[0,T]}\|\Y^\e(t)\|_\H^p+\frac{pL_\A\e}{2} \int_0^T\|\Y^\e(t)\|_\H^{p-2}\|\Y^\e(t)\|_\V^\beta\d t. 
\end{align*}
\end{lemma}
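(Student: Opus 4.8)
The plan is to prove \eqref{2.5} by combining It\^o's formula with the coercivity bound Hypothesis~\ref{hyp1}~(H.3), the growth bound \eqref{1.09}, and the exponential (Freidlin--Wentzell type) martingale inequality. The feature that makes the estimate uniform in $\e$ is the scaling in \eqref{2.2}: each It\^o-correction term and the quadratic variation of the stochastic integral carry a factor $\e$, the stochastic integral a factor $\sqrt\e$, and since $f\in\L^1(0,T;\R^+)$ and $g_1\in\L^\infty(0,T;\R^+)$ one has $\e\int_0^T\big(f(\e s)+g_1(\e s)\big)\,\d s\le C$ uniformly in $\e\in(0,1]$.

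\emph{Step 1 (reduction to a martingale tail).} Apply It\^o's formula to $\|\Y^\e(t)\|_\H^p$ in the variational framework (legitimate here, e.g.\ after working with $(1+\|\cdot\|_\H^2)^{p/2}$ so that $r\mapsto r^{p/2}$ is $C^2$ at the origin). Using (H.3) to treat $p\e\|\Y^\e\|_\H^{p-2}\langle\A(\e s,\Y^\e),\Y^\e\rangle$ --- keeping $\tfrac{pL_\A\e}{2}\int_0^t\|\Y^\e\|_\H^{p-2}\|\Y^\e\|_\V^\beta\,\d s$ on the left and discarding the other, nonpositive half of the dissipation --- and bounding the remaining It\^o terms with \eqref{1.09} together with $\|\B^*\Y^\e\|_\U^2\le\|\B\|_{\L_2}^2\|\Y^\e\|_\H^2$, one arrives at
\begin{align*}
\|\Y^\e(t)\|_\H^p+\tfrac{pL_\A\e}{2}\int_0^t\|\Y^\e\|_\H^{p-2}\|\Y^\e\|_\V^\beta\,\d s\le\|\x\|_\H^p+C\e\int_0^t\big(f(\e s)+g_1(\e s)\big)\big(1+\|\Y^\e(s)\|_\H^p\big)\,\d s+M^\e(t),
\end{align*}
with $M^\e(t)=p\sqrt\e\int_0^t\|\Y^\e\|_\H^{p-2}(\Y^\e,\B(\e s,\Y^\e)\,\d\W)$ and $\langle M^\e\rangle(t)\le C\e\int_0^t g_1(\e s)\big(1+\|\Y^\e(s)\|_\H^{2p}\big)\,\d s$. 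Taking $\sup_{[0,t]}$ and invoking Gronwall's lemma (licit because $\e\int_0^T(f(\e s)+g_1(\e s))\,\d s\le C$), then re-inserting the resulting bound on $\sup_{[0,T]}\|\Y^\e\|_\H^p$ to estimate the dissipation integral, gives constants $A,B$ with $\big(|\Y^\e|_\H^\V(T)\big)^p\le A+B\sup_{[0,T]}M^\e(t)$. Thus it suffices to bound $\P\{\sup_{[0,T]}M^\e(t)>\rho\}$ and $\P\{\sup_{[0,T]}\|\Y^\e(t)\|_\H^p>\rho\}$ for large $\rho$.

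\emph{Step 2 (the main obstacle: a logarithmic functional).} A direct application of the exponential martingale inequality to $M^\e$ is \emph{not} sufficient, because on $\{\sup_t\|\Y^\e\|_\H^p\le R\}$ the quadratic variation $\langle M^\e\rangle$ is of order $\e R^2$ while the relevant threshold from Step~1 is of order $R$, and the two powers of $R$ cancel, yielding only $\P\{(|\Y^\e|_\H^\V(T))^p>R\}\le e^{-c/\e}$ with $c$ \emph{independent} of $R$. The remedy is to control $\sup_t\|\Y^\e(t)\|_\H^p$ through It\^o's formula for $\log(1+\|\Y^\e(t)\|_\H^2)$: the weight $(1+\|\Y^\e\|_\H^2)^{-1}$ exactly compensates the linear growth in \eqref{1.09}, so the drift is bounded by $\e\big(2f(\e s)+g_1(\e s)\big)$ and the martingale part $\widetilde M^\e(t)=\int_0^t\frac{2\sqrt\e}{1+\|\Y^\e\|_\H^2}(\Y^\e,\B(\e s,\Y^\e)\,\d\W)$ has $\langle\widetilde M^\e\rangle(T)\le 4\e\int_0^T g_1(\e s)\,\d s\le C\e$ \emph{deterministically}, with no power of $R$. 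Hence $\log(1+\|\Y^\e(t)\|_\H^2)\le\log(1+\|\x\|_\H^2)+C+\widetilde M^\e(t)$, and the exponential martingale inequality gives, for $\rho$ large,
\begin{align*}
\P\Big\{\sup_{t\in[0,T]}\|\Y^\e(t)\|_\H^p>\rho\Big\}=\P\Big\{\sup_{t\in[0,T]}\log(1+\|\Y^\e\|_\H^2)>\log(1+\rho^{2/p})\Big\}\le\exp\!\Big(-\tfrac{(c\log\rho-C)^2}{C\e}\Big).
\end{align*}

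\emph{Step 3 (conclusion).} Set $\rho_R:=(R-A)/B\to\infty$ and choose $\rho=\rho(R)$ with $\rho\to\infty$ and $\rho/\rho_R\to0$, say $\rho=\sqrt{\rho_R}$. Stopping $M^\e$ at $\tau^\e_\rho:=\inf\{t\ge0:\|\Y^\e(t)\|_\H^p>\rho\}$ makes $\langle M^\e\rangle(\cdot\wedge\tau^\e_\rho)\le C\e\rho^2$, so splitting over $\{\sup_t\|\Y^\e\|_\H^p\le\rho\}$ and its complement and using Step~2,
\begin{align*}
\P\Big\{\big(|\Y^\e|_\H^\V(T)\big)^p>R\Big\}\le\P\Big\{\sup_tM^\e(t\wedge\tau^\e_\rho)>\rho_R\Big\}+\P\Big\{\sup_t\|\Y^\e\|_\H^p>\rho\Big\}\le\exp\!\Big(-\tfrac{\rho_R^2}{C\e\rho^2}\Big)+\exp\!\Big(-\tfrac{(c\log\rho-C)^2}{C\e}\Big).
\end{align*}
With $\rho=\sqrt{\rho_R}$ the first exponent is of order $\rho_R/(C\e)$ and the second of order $(\log\rho_R)^2/(C\e)$, so for all large $R$, $\e\log\P\{(|\Y^\e|_\H^\V(T))^p>R\}\le\log 2-c(\log\rho_R)^2$, a bound independent of $\e$ that tends to $-\infty$ as $R\to\infty$; this is \eqref{2.5}. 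As indicated, the crux is Step~2: passing from $\|\Y^\e\|_\H^p$ to $\log(1+\|\Y^\e\|_\H^2)$ is what removes the spurious power of $R$ from the quadratic variation and turns the bound $e^{-c/\e}$ into the $e^{-\psi(R)/\e}$ with $\psi(R)\to\infty$ that the conclusion requires.
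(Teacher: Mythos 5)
Your argument is correct, but it takes a genuinely different route from the paper. The paper applies It\^o's formula to $\|\Y^\e(t)\|_\H^p$, takes $\L^q(\Omega)$ moments, controls the stochastic integral with the Barlow--Yor/Davis maximal inequality carrying the sharp constant $Cq^{1/2}$ (their display \eqref{2.9}), and closes the estimate by Gronwall. The whole point is then to choose $q=2/\e$: the product $\sqrt{q\e}=\sqrt 2$ is bounded, so $\big\{\E\big[(|\Y^\e|_\H^\V(T))^{pq}\big]\big\}^{2/q}\le C$ uniformly in $\e\in(0,1]$, and Chebyshev with exponent $q$ yields $\e\log\P\{\cdot>R\}\le -2\log R+\log C\to-\infty$. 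There is no separate exponential martingale step, no stopping, and no logarithmic transformation; the $\e$--uniformity is entirely a byproduct of pairing the $\sqrt q$ BDG constant with $q=2/\e$. Your proof instead performs a pathwise Gronwall, reduces to a tail estimate for $\sup_t M^\e$, and then must confront the homogeneity cancellation you describe in Step~2. That obstacle is real \emph{for the exponential-martingale route}, and your fix --- passing to $\log(1+\|\Y^\e\|_\H^2)$ so that $\langle\widetilde M^\e\rangle(T)\le C\e$ deterministically, then splitting at two scales $\rho=\sqrt{\rho_R}$ --- is a legitimate and rather elegant way around it, reminiscent of Freidlin--Wentzell--type exponential bounds. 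What the paper's approach buys is brevity and the avoidance of the two-scale splitting; what your approach buys is independence from the sharp $q^{1/2}$ BDG constant (ordinary exponential inequalities suffice), which is worth noting since the availability of the sharp constant in more general Banach-space settings is precisely the issue the authors flag when discussing a potential UMD extension. One small cosmetic point: in Step~1 you should carry $\sup_{[0,T]}|M^\e(t)|$ rather than $\sup_{[0,T]}M^\e(t)$ when you re-insert the Gronwall bound into the dissipation integral, since $M^\e$ can be negative; this does not affect the subsequent estimates.
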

\begin{proof}
	Applying infinite dimensional It\^o's formula to the process $\|\Y^\e(\cdot)\|_\H^p$ (see Theorem 1.2, \cite{IGDS}), we find
	\begin{align}\label{2.6}\nonumber
	&	\|\Y^\e(t)\|_\H^p \\&\nonumber=\|\x\|_\H^p+\frac{p\e}{2}\int_0^t\|\Y^\e(s)\|_\H^{p-2}\big[2\langle\A(\e s,\Y^\e(s)),\Y^\e(s)\rangle +\|\B(\e s,\Y^\e(s))\|_{\L_2}^2 \big]\d s\\&\nonumber\quad+\frac{p(p-2)\e}{2}\int_0^t \|\Y^\e(s)\|_\H^{p-4}\|\B(\e s,\Y^\e(s))\circ\Y^\e(s)\|_\U^2\d s\\&\nonumber\quad + p\sqrt{\e}\int_0^t \|\Y^\e(s)\|_\H^{p-2}\big(\B(\e s,\Y^\e(s))\d \W(s),\Y^\e(s)\big)
	 \\&\nonumber\leq \|\x\|_\H^p-\frac{pL_\A\e}{2}\int_0^t \|\Y^\e(s)\|_\H^{p-2}\|\Y^\e(s)\|_\V^\beta\d s+\frac{p\e}{2}\int_0^tf(\e s)\big(1+\|\Y^\e(s)\|_\H^2\big)\|\Y^\e(s)\|_\H^{p-2}\d s\\&\nonumber\quad +\frac{p(p-1)\e}{2}\int_0^tg_1(\e s)\big(1+\|\Y^\e(s)\|_\H^2 \big)\|\Y^\e(s)\|_\H^{p-2}\d s\\&\quad + p\sqrt{\e}\int_0^t \|\Y^\e(s)\|_\H^{p-2}\big(\B(\e s,\Y^\e(s))\d \W(s),\Y^\e(s)\big),
	\end{align}
$\P$-a.s., for all $t\in[0,T]$, where we have used Hypotheses \ref{hyp1} (H.2) and \ref{hyp2} (H.6). Using H\"older's and Young's inequalities in the third and fourth terms of the right hand side of the  inequality \eqref{2.6}, we deduce 
	\begin{align}\label{2.7}\nonumber
	&	\|\Y^\e(t)\|_\H^p +\frac{pL_\A\e}{2}\int_0^t \|\Y^\e(s)\|_\H^{p-2}\|\Y^\e(s)\|_\V^\beta\d s\\&\nonumber\leq \|\x\|_\H^p +C_p  \int_0^{ T}\big(f(s)+g_1(s)\big)\d s+C_p\e \int_0^t\big(f(\e s)+g_1(\e s)\big)\|\Y^\e(s)\|_\H^p\d s\\&\quad + p\sqrt{\e}\int_0^t \|\Y^\e(s)\|_\H^{p-2}\big(\B(\e s,\Y^\e(s))\d \W(s),\Y^\e(s)\big),
		\end{align}$\P$-a.s., for all $t\in[0,T]$. Taking supremum from $0$ to $T$ and for any $q\geq 2$, we find
	\begin{align}\label{2.8}\nonumber
		&	\bigg\{\E\bigg[\bigg(|\Y^\e|_\H^\V(T)\bigg)^{pq}\bigg]\bigg\}^{\frac{1}{q}} \\&\nonumber \leq \|\x\|_\H^p+C_p \int_0^T\big(f(s)+g_1(s)\big)\d s+C_p\e \bigg\{\E\bigg[\int_0^T
		\big(f(\e s)+g_1(\e s)\big)\big(|\Y^\e|_\H^\V(s)\big)^p\d s\bigg]^q\bigg\}^\frac{1}{q}\\&\quad + C_p\sqrt{\e} \bigg\{\E\bigg[\sup_{t\in[0,T]}\bigg|\int_0^t\|\Y^\e(s)\|_\H^{p-2}\big(\B(\e s,\Y^\e(s))\d \W(s),\Y^\e(s)\big)\bigg|^q\bigg]\bigg\}^{\frac{1}{q}}. 
		\end{align}To estimate the final term of the right hand side of the  inequality \eqref{2.8}, we use the following result (cf. \cite{MTBMY,BDA}). There exists a positive constant $C$ such that, for any $q\geq 2$ and for any continuous martingale $\Z=\{[\Z_t]\}_{t\geq 0}$ with $\Z_0=0$, we have 
	\begin{align}\label{2.9}
		\big\{\E\big[|\Z_t^*|^q\big]\big\}^{\frac{1}{q}} \leq Cq^{\frac{1}{2}}\big\{\E\langle \Z\rangle_t^{\frac{q}{2}}\big\}^{\frac{1}{q}},	
	\end{align}where $\Z_t^*=\sup\limits_{s\in[0,t]}|\Z_s|$ and $\langle \Z\rangle_{\cdot}$ is the quadratic variation process. Using \eqref{2.9}, Hypothesis \ref{hyp2} (H.6),  H\"older's, Young's  and Minkowski's inequalities, we arrive at 
\begin{align}\label{2.10}\nonumber
	&\sqrt{\e}\bigg\{\E\bigg[\sup_{t\in[0,T]}\bigg|\int_0^t \|\Y^\e(s)\|_\H^{p-2}\big(\B(\e s,\Y^\e(s))\d \W(s),\Y^\e(s)\big)\bigg|^q\bigg]\bigg\}^{\frac{1}{q}} \\&\nonumber\leq C \sqrt{q\e}\bigg\{\E\bigg[\int_0^T \|\Y^\e(s)\|_\H^{2p-2}\|\B(\e s,\Y^\e(s))\|_{\L_2}^2\d s\bigg]^{\frac{q}{2}}\bigg\}^{\frac{1}{q}}\\&\nonumber\leq C \sqrt{q\e}\bigg\{\E\bigg[\int_0^T \|\Y^\e(s)\|_\H^{2p-2}g_1(\e s)\big(1+\|\Y^\e(s)\|_\H^2\big)\d s\bigg]^{\frac{q}{2}}\bigg\}^{\frac{1}{q}}\\&\nonumber\leq C_p \sqrt{q\e}\bigg\{\E\bigg[\int_0^Tg_1(\e s)\d s+ \int_0^T g_1(\e s)\|\Y^\e(s)\|_\H^{2p}\d s\bigg]^{\frac{q}{2}}\bigg\}^{\frac{1}{q}}\\&\nonumber\leq C_p \sqrt{q}\bigg(\int_0^{T}g_1(s)\d s\bigg)^{\frac{1}{2}}+C_p \sqrt{q\e}\bigg\{\E\bigg[ \int_0^T g_1(\e s)\|\Y^\e(s)\|_\H^{2p}\d s\bigg]^{\frac{q}{2}}\bigg\}^{\frac{1}{q}}\\&\leq C_p \sqrt{q\e T}\bigg(\esssup\limits_{s\in[0,T]}g_1(s)\bigg)^{\frac{1}{2}}+C_p \sqrt{q\e}\bigg\{\int_0^T g_1(\e s)\big\{\E\big[ \|\Y^\e(s)\|_\H^{pq}\big]\big\}^{\frac{2}{q}}\d s\bigg\}^{\frac{1}{2}}.
\end{align}
Substituting \eqref{2.10} in \eqref{2.8}, and for any $q\geq 2,$  we find
\begin{align}\label{2.11}\nonumber
		\bigg\{\E\bigg[\bigg(|\Y^\e|_\H^\V(T)\bigg)^{pq}\bigg]\bigg\}^{\frac{2}{q}} & \leq 2 \|\x\|_\H^{2p}+C_p \bigg(\int_0^T\big(f(s)+g_1(s)\big)\d s\bigg)^2\\&\nonumber\quad+C_p\e^2 \bigg\{\E\bigg[\int_0^T
	\big(f(\e s)+g(\e s)\big)\big(|\Y^\e|_\H^\V(s)\big)^p\d s\bigg]^q\bigg\}^\frac{2}{q}\\&\quad +C_p q\e T\bigg(\esssup\limits_{s\in[0,T]}g_1(s)\bigg)+C_pq\e\int_0^T g_1(\e s)\big\{\E\big[ \|\Y^\e(s)\|_\H^{pq}\big]\big\}^{\frac{2}{q}}\d s.
\end{align}We consider the penultimate term in the right hand side of the  inequality \eqref{2.11} and estimate it using Minkowski's  and H\"older's inequalities as 
\begin{align}\label{2.12}\nonumber
&C_p\e^2 \bigg\{\E\bigg[\int_0^T
	\big(f(\e s)+g_1(\e s)\big)\big(|\Y^\e|_\H^\V(s)\big)^p\d s\bigg]^q\bigg\}^\frac{2}{q}\\&\nonumber\leq C_p\e^2\left[\int_0^T\big(f(\e s)+g_1(\e s)\big)\bigg\{\E\bigg[\bigg(|\Y^\e|_\H^\V(s)\bigg)^{pq}\bigg]\bigg\}^{\frac{1}{q}}\d s\right]^2\\&\leq C_p\e\bigg(\int_0^T\big(f(s)+g_1(s)\big)\d s\bigg)\bigg(\int_0^T\big(f(\e s)+g_1(\e s)\big)\bigg\{\E\bigg[\bigg(|\Y^\e|_\H^\V(s)\bigg)^{pq}\bigg]\bigg\}^{\frac{2}{q}}\d s\bigg).
\end{align}Substituting \eqref{2.12} in \eqref{2.11}, we deduce
\begin{align}\label{2.13}\nonumber
	&	\bigg\{\E\bigg[\bigg(|\Y^\e|_\H^\V(T)\bigg)^{pq}\bigg]\bigg\}^{\frac{2}{q}} \\&\nonumber \leq 2\|\x\|_\H^{2p}+C_p \bigg(\int_0^T\big(f(s)+g_1(s)\big)\d s\bigg)^2\\&\nonumber\quad +C_p\e \bigg(\int_0^T\big(f(s)+g_1(s)\big)\d s\bigg)\bigg(\int_0^T\big(f(\e s)+g_1(\e s)\big)\bigg\{\E\bigg[\bigg(|\Y^\e|_\H^\V(s)\bigg)^{pq}\bigg]\bigg\}^{\frac{2}{q}}\d s\bigg)\\&\nonumber\quad + C_p q\e T\bigg(\esssup\limits_{s\in[0,T]}g_1(s)\bigg)+ C_pq\e\int_0^T g_1(\e s)\big\{\E\big[ \|\Y^\e(s)\|_\H^{pq}\big]\big\}^{\frac{2}{q}}\d s	\\&\nonumber \leq 2\|\x\|_\H^{2p}+C_p \bigg(\int_0^T\big(f(s)+g_1(s)\big)\d s\bigg)^2\\&\nonumber\quad +C_p\e \bigg(\int_0^T\big(f(s)+g_1(s)\big)\d s\bigg)\bigg(\int_0^T\big(f(\e s)+g_1(\e s)\big)\bigg\{\E\bigg[\bigg(|\Y^\e|_\H^\V(s)\bigg)^{pq}\bigg]\bigg\}^{\frac{2}{q}}\d s\bigg)\\&\quad +C_pq\e T\bigg(\esssup\limits_{s\in[0,T]}g_1(s)\bigg)+C_pq\e\int_0^T g_1(\e s)\bigg\{\E\bigg[\bigg(|\Y^\e|_\H^\V(s)\bigg)^{pq}\bigg]\bigg\}^{\frac{2}{q}}\d s.
\end{align}An application of Gronwall's inequality  in \eqref{2.13} yields
\begin{align}\label{2.14}\nonumber
		&	\bigg\{\E\bigg[\bigg(|\Y^\e|_\H^\V(T)\bigg)^{pq}\bigg]\bigg\}^{\frac{2}{q}} \\&\nonumber \leq \bigg\{ 2\|\x\|_\H^{2p}+C_p \bigg(\int_0^T\big(f(s)+g_1(s)\big)\d s\bigg)^2+C_p q\e T\bigg(\esssup\limits_{s\in[0,T]}g_1(s)\bigg)  \bigg\}\\&\qquad\times \exp\bigg\{C_p  \bigg(\int_0^T\big(f(s)+g_1(s)\big)\d s\bigg)^2+C_pq\e T\bigg(\esssup\limits_{s\in[0,T]}g_1(s)\bigg) \bigg\}.
\end{align}An application of Markov's inequality yields 
\begin{align*}
	\P \bigg\{\bigg(|\Y^\e|_\H^\V(T)\bigg)^p>R\bigg\} \leq R^{-q}\E\bigg[\bigg(|\Y^\e|_\H^\V(T)\bigg)^{pq}\bigg].
\end{align*}Taking $q=\frac{2}{\e}$, we get
\begin{align}\label{2.15}\nonumber
	&\e\log\P \bigg\{\bigg(|\Y^\e|_\H^\V(T)\bigg)^p>R\bigg\}  \\&\nonumber\leq -2\log R+\log \bigg\{\E\bigg[\bigg(|\Y^\e|_\H^\V(T)\bigg)^{pq}\bigg]\bigg\}^\frac{2}{q}\\&\nonumber\leq -2\log R+\log\bigg\{ 2\|\x\|_\H^{2p}+C_p \bigg(\int_0^T\big(f(s)+g_1(s)\big)\d s\bigg)^2+2C_p T\bigg(\esssup\limits_{s\in[0,T]}g_1(s)\bigg) \bigg\}\\&\quad + C_p  \bigg(\int_0^T\big(f(s)+g_1(s)\big)\d s\bigg)^2+2C_p T  \bigg(\esssup\limits_{s\in[0,T]}g_1(s)\bigg).
\end{align}Passing $R\to\infty$ and using the fact that $f\in\L^1(0,T;\R^+)$ and $g_1\in\L^\infty(0,T;\R^+)$ in the above inequality \eqref{2.15}, we obtain the required result \eqref{2.5}. 
\end{proof}Since the embedding $\V\hookrightarrow \H$ is dense, there exists a sequence $\{\x_m\}_{m\in\N}$ in $\V$ such that 
\begin{align}\label{2.16}
	\lim_{m\to\infty} \|\x_m-\x\|_\H=0.
\end{align}Let $\Y_m^\e(\cdot)$ be the solution of \eqref{2.2} with the initial data $\x_m$.
 By Lemma \ref{lemma1}, we have
\begin{align}\label{2.17}
	\lim_{R\to\infty}\sup_{m\in\N}\sup_{\e\in(0,1]}\e\log\P\bigg\{\bigg(|\Y_m^\e|_\H^\V(T)\bigg)^p>R\bigg\}=-\infty.
\end{align}Let us denote $\X_m^\e(\cdot)$ as the solution of \eqref{2.3}  with the initial data $\x_m$, that is, 
	\begin{align}\label{2.03}
	\X_m^\e(t)=\x_m+\sqrt{\e}\int_0^t\B(\e s,\X_m^\e(s))\d \W(s).
\end{align}
The following lemma plays a crucial role in the proof of our main result. 
\begin{lemma}\label{lemma2}
	For any $m\in\mathbb{N}$, we have 
	\begin{align}\label{2.18}
		\lim_{R\to\infty}\sup_{\e\in(0,1]}\e\log\P \bigg\{\sup_{t\in[0,T]}\|\X_m^\e(t)\|_\V^2>R\bigg\}=-\infty.
	\end{align}
\end{lemma}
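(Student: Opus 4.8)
The plan is to control a high moment of $\sup_{t\in[0,T]}\|\X_m^\e(t)\|_\V$ and then conclude with Markov's inequality, the essential point being the choice of that moment exponent as a function of $\e$. Since the norm of the $2$-smooth Banach space $\V$ is not known to be twice Fr\'echet differentiable, I cannot apply It\^o's formula to $\|\X_m^\e(\cdot)\|_\V^p$, as was done for $\|\Y^\e(\cdot)\|_\H^p$ in Lemma \ref{lemma1}; instead I would work directly in \eqref{2.03}, taking $\V$-norms and using the Burkholder-Davis-Gundy inequality \eqref{112} in $\V$ (with the sharp constant $p^{1/2}$ of Theorem 1.1, \cite{JS}) together with the growth bound \eqref{1.10}. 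All the estimates below would first be performed after a routine localization --- e.g.\ via Galerkin approximations or the stopping times $\inf\{t\geq0:\|\X_m^\e(t)\|_\V>N\}$ --- which ensures that the moments involved are a priori finite (in particular that $\X_m^\e$ is $\V$-valued); the localization is removed at the end via Fatou's lemma, and I would suppress this step.

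Fix $q\geq2$ and apply \eqref{112} with $p=2q$. Raising \eqref{2.03} to the power $2q$ in the $\V$-norm, taking the supremum over $[0,t]$ and then expectations, and using in turn \eqref{112}, the bound \eqref{1.10} with $h\in\L^\infty(0,T;\R^+)$, H\"older's inequality in the time variable and $(a+b)^q\leq2^{q-1}(a^q+b^q)$, I expect to arrive at an inequality of the form
\begin{align*}
	\E\bigg[\sup_{s\in[0,t]}\|\X_m^\e(s)\|_\V^{2q}\bigg]\leq A_{q,\e}+B_{q,\e}\int_0^t\E\bigg[\sup_{r\in[0,s]}\|\X_m^\e(r)\|_\V^{2q}\bigg]\d s,
\end{align*}
with $B_{q,\e}=\tfrac{1}{4T}(\kappa q\e)^q$ and $A_{q,\e}=2^{2q-1}\|\x_m\|_\V^{2q}+\tfrac14(\kappa q\e)^q$, where $\kappa:=16\,C^2T\esssup_{s\in[0,T]}h(s)$ does not depend on $q$ or $\e$ and $\|\x_m\|_\V<\infty$ since $\x_m\in\V$. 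Gronwall's inequality then gives
\begin{align*}
	\E\bigg[\sup_{t\in[0,T]}\|\X_m^\e(t)\|_\V^{2q}\bigg]\leq\Big(2^{2q-1}\|\x_m\|_\V^{2q}+\tfrac14(\kappa q\e)^q\Big)\exp\Big(\tfrac14(\kappa q\e)^q\Big).
\end{align*}

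By Markov's inequality applied to the $q$-th power of $\sup_t\|\X_m^\e(t)\|_\V^2$, one obtains for every $q\geq2$
\begin{align*}
	\e\log\P\bigg\{\sup_{t\in[0,T]}\|\X_m^\e(t)\|_\V^2>R\bigg\}\leq-q\e\log R+\e\log\Big(2^{2q-1}\|\x_m\|_\V^{2q}+\tfrac14(\kappa q\e)^q\Big)+\frac{\e}{4}(\kappa q\e)^q,
\end{align*}
and the step I expect to be the main obstacle is the choice of $q=q(\e)$. This is where the argument genuinely departs from Lemma \ref{lemma1}: because the sharp BDG constant in $\V$ enters as $p^{p/2}$ (there being no It\^o-formula shortcut), the choice $q=2/\e$ used in Lemma \ref{lemma1} would make the term $\tfrac{\e}{4}(\kappa q\e)^q=\tfrac{\e}{4}(2\kappa)^{2/\e}$ blow up as $\e\to0$ whenever $2\kappa>1$. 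Instead I would fix $\delta\in\big(0,\min\{1,1/\kappa\}\big)$ and take $q=q(\e):=\max\{2,\delta/\e\}$, so that $q\e=\max\{2\e,\delta\}\leq2$; then $(\kappa q\e)^q$ stays bounded uniformly in $\e\in(0,1]$ (it equals $(\kappa\delta)^{\delta/\e}\leq\kappa\delta<1$ as soon as $\e<\delta/2$, and equals $(2\kappa\e)^2\leq4\kappa^2$ otherwise), while $-q\e\log R\leq-\delta\log R$. Consequently the three terms on the right-hand side above are dominated, uniformly in $\e\in(0,1]$, by $-\delta\log R+K$ for a constant $K$ depending only on $\delta$, $\kappa$ and $\|\x_m\|_\V$, and letting $R\to\infty$ yields \eqref{2.18}. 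The only delicate point is thus this bookkeeping of the high-moment exponent through Gronwall's inequality, so that the exponential-in-$p$ BDG constant does not swamp the gain coming from Markov's inequality.
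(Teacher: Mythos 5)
Your argument is correct, but the route through Gronwall is genuinely different from the paper's, and the difference is worth spelling out. The paper does \emph{not} estimate $\E\big[\sup_t\|\X_m^\e(t)\|_\V^{2q}\big]$ directly; instead it works with the $L^{2q}(\Omega)$-norm, i.e.\ with $\big\{\E\big[\sup_t\|\X_m^\e(t)\|_\V^{2q}\big]\big\}^{1/(2q)}$, using Minkowski's inequality (in $\Omega$ and in time) to split the initial datum from the stochastic convolution and to pull the time integral outside the expectation. In that normalization the sharp BDG constant of \cite{JS} enters as $C\sqrt{2q\e}$, so after squaring the Gronwall coefficient is $\mathcal{O}(q\e)$ and, crucially, only \emph{linear} in $q\e$; the Gronwall exponential is then $\exp\{Cq\e T\,\esssup h\}$, which stays bounded with the same choice $q=1/\e$ as in Lemma \ref{lemma1}. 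By not taking roots you are left with the $(\kappa q\e)^q$ constant you describe, which is exponential in $q$, and your fix --- taking $q=\max\{2,\delta/\e\}$ with $\delta<1/\kappa$ so that $q\e=\delta$ is small while $-q\e\log R\leq-\delta\log R$ still diverges --- does salvage the argument. Both proofs are valid; the root-taking version is cleaner in that it keeps the bookkeeping parallel to Lemma \ref{lemma1} and needs no auxiliary parameter, whereas yours makes visible exactly how much of the sharpness of the Seidler constant is actually needed.

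One small correction to the narrative: you attribute the exponential-in-$q$ BDG constant to the absence of an It\^o formula in $\V$. That is not quite the dichotomy at play. The paper's Lemma \ref{lemma2} also has no It\^o formula available (for the reason you correctly give), yet it still keeps the BDG constant linear in $q$ --- the device that achieves this is the Minkowski/$L^q(\Omega)$-norm bookkeeping, not It\^o's formula. The It\^o shortcut in Lemma \ref{lemma1} is a separate convenience; what you genuinely traded away is the $q$-th root, and your $\delta$-trick is the compensating idea.

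Finally, since you took the trouble to display explicit constants $A_{q,\e},B_{q,\e}$: also ensure the term $\e\log A_{q,\e}$ is controlled after substituting $q=\delta/\e$. With $A_{q,\e}=2^{2q-1}\|\x_m\|_\V^{2q}+\tfrac14(\kappa q\e)^q$ one gets $\e\log A_{q,\e}\leq 2\delta\log 2+2\delta\log\|\x_m\|_\V+\mathcal{O}(\e)$ (the second summand being $\leq 1$), which is indeed uniformly bounded in $\e\in(0,1]$; this is consistent with your claim but should be stated, since it is the one place where $\|\x_m\|_\V<\infty$ (rather than $\|\x_m\|_\H<\infty$) is used.
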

\begin{proof}To estimate the stochastic integral in $\V$-norm, we use the  Burkholder-Davis-Gundy type inequality (see Theorem 1.1, \cite{JS}) for 2-smooth Banach spaces. For any $q>1$, we have
	\begin{align}\label{2.19}		\nonumber
	&	\bigg\{\E\bigg[\sup_{t\in[0,T]}\|\X_m^\e(t)\|_\V^{2q}\bigg]\bigg\}^{\frac{1}{2q}} \\&\nonumber\leq \|\x_m\|_\V +\sqrt{\e}\bigg\{\E\bigg[\sup_{t\in[0,T]}\bigg\|\int_0^t\B(\e s,\X_m^\e(s))\d \W(s)\bigg\|_\V^{2q}\bigg]\bigg\}^{\frac{1}{2q}}\\&\nonumber\leq \|\x_m\|_\V +C\sqrt{2q\e}\bigg\{\E\bigg[\int_0^T\|\B(\e s,\X_m^\e(s))\|_\gamma^2\d s\bigg]^q\bigg\}^{\frac{1}{2q}}
	\\&\nonumber 
	\leq \|\x_m\|_\V +C\sqrt{2q\e}\bigg\{\E\bigg[\int_0^Th(\e s)\big(1+\|\X_m^\e(s)\|_\V^2\big)\d s\bigg]^q\bigg\}^{\frac{1}{2q}}\\&\nonumber
	\leq \|\x_m\|_\V +C\sqrt{2q\e T}\bigg(\esssup_{s\in[0,T]}h(s)\bigg)^{\frac{1}{2}}+C\sqrt{2q\e}\left[\bigg\{\E\bigg[\int_0^Th(\e s)\|\X_m^\e(s)\|_\V^2\d s\bigg]^q\bigg\}^{\frac{1}{q}}\right]^{\frac{1}{2}}
	\\&
	\leq \|\x_m\|_\V +C\sqrt{2q\e T}\bigg(\esssup_{s\in[0,T]}h(s)\bigg)^{\frac{1}{2}}+C\sqrt{2q\e}\left[\int_0^Th(\e s)\left\{\E\bigg[\|\X_m^\e(s)\|_\V^{2q}\bigg]\right\}^{\frac{1}{q}}\d s\right]^{\frac{1}{2}},
	\end{align}
where we have used Hypothesis \ref{hyp2} (H.6), Minkowski's and H\"older's inequalities, and the constant $C$ is independent of $\e$ and $q$. Therefore, we obtain 
	\begin{align}\label{2.20}	\nonumber	&\bigg\{\E\bigg[\sup_{t\in[0,T]}\|\X_m^\e(t)\|_\V^{2q}\bigg]\bigg\}^{\frac{1}{q}} 
		\\&	\leq2 \|\x_m\|_\V^2 +Cq\e T\bigg(\esssup_{s\in[0,T]}h(s)\bigg)+Cq\e \bigg(\int_0^Th(\e s )\bigg\{\E\bigg[\|\X_m^\e(s)\|_\V^{2q}\bigg]\bigg\}^{\frac{1}{q}}\d s\bigg).
\end{align}
An application of Gronwall's inequality in \eqref{2.20} yields
\begin{align}\label{2.21}\nonumber
	\bigg\{\E\bigg[\sup_{t\in[0,T]}\|\X_m^\e(t)\|_\V^{2q}\bigg]\bigg\}^{\frac{1}{q}}& \leq \bigg\{2 \|\x_m\|_\V^2+Cq\e T\bigg(\esssup_{s\in[0,T]}h(s)\bigg)\bigg\}\\&\qquad\times\exp\bigg\{Cq\e T\bigg(\esssup_{s\in[0,T]}h(s)\bigg)\bigg\}.
\end{align}Fixing $R$ and taking $q=\frac{1}{\e}$ and applying Markov's inequality, we get
\begin{align}\label{2.22}	\nonumber
&\e\log\P\bigg\{\sup_{t\in[0,T]}\|\X_m^\e(t)\|_\V^2>R\bigg\}	 \\&	\nonumber\leq \e \log \bigg\{R^{-q}\E\bigg[\sup_{t\in[0,T]}\|\X_m^\e(t)\|_\V^{2q}\bigg]\bigg\}\\&	\nonumber\leq  -\log R +\log \bigg\{\E\bigg[\sup_{t\in[0,T]}\|\X_m^\e(t)\|_\V^{2q}\bigg]\bigg\}^{\frac{1}{q}} \\&\leq -\log R+\log\bigg\{2 \|\x_m\|_\V^2+C T\bigg(\esssup_{s\in[0,T]}h(s)\bigg)\bigg\}+C T\bigg(\esssup_{s\in[0,T]}h(s)\bigg). 
\end{align}Passing $R\to\infty$ and  using the fact that $h\in\L^\infty(0,T;\R^+)$ and \eqref{2.16}, we obtain the required result \eqref{2.18}.
\end{proof}
Before going to the exponential convergence part, let us establish the following lemma:
\begin{lemma}\label{lemma3}
	Under Hypotheses \ref{hyp1} and \ref{hyp2}, for the solution $\Y_m^\e(\cdot)$ of the system \eqref{1.1} corresponding to the initial data $\x_m\in\H$, we have
	\begin{align}\label{lem3.1}
\sup_{m\in\N} \E\bigg[\sup_{t\in[0,T]}\|\Y_m^\e(t)\|_\H^p+\int_0^T\|\Y_m^\e(t)\|_\H^{p-2}\|\Y_m^\e(t)\|_\V^\beta\d t\bigg]		 \leq C,
	\end{align}where the constant $C$ is independent of $m$.
\end{lemma}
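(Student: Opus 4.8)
The plan is to repeat the Itô-formula computation from the proof of Lemma \ref{lemma1}, but now keeping track of the $\|\Y_m^\e\|_\V^\beta$ coercivity term instead of discarding it, and to extract a genuine (non-logarithmic) moment bound uniform in $m$. Concretely, I would apply the infinite-dimensional Itô formula to $\|\Y_m^\e(\cdot)\|_\H^p$ exactly as in \eqref{2.6}, use Hypothesis \ref{hyp1} (H.3) for the coercivity term and Hypothesis \ref{hyp2} (H.6) (inequality \eqref{1.09}) for the noise term, to arrive at
\begin{align*}
	\|\Y_m^\e(t)\|_\H^p+\frac{pL_\A\e}{2}\int_0^t\|\Y_m^\e(s)\|_\H^{p-2}\|\Y_m^\e(s)\|_\V^\beta\d s
	&\leq \|\x_m\|_\H^p+C_p\int_0^T\big(f(s)+g_1(s)\big)\d s\\
	&\quad+C_p\e\int_0^t\big(f(\e s)+g_1(\e s)\big)\|\Y_m^\e(s)\|_\H^p\d s\\
	&\quad+p\sqrt{\e}\int_0^t\|\Y_m^\e(s)\|_\H^{p-2}\big(\B(\e s,\Y_m^\e(s))\d\W(s),\Y_m^\e(s)\big),
\end{align*}
$\P$-a.s.\ for all $t\in[0,T]$, which is precisely \eqref{2.7} with $\x$ replaced by $\x_m$. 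Since $\|\x_m-\x\|_\H\to 0$, the sequence $\{\|\x_m\|_\H\}_{m\in\N}$ is bounded, so the first term is bounded uniformly in $m$; the second term is bounded because $f\in\L^1(0,T;\R^+)$ and $g_1\in\L^\infty(0,T;\R^+)$.

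Next I would take expectation and supremum over $[0,T]$. For the stochastic-integral term I would apply the Burkholder--Davis--Gundy inequality together with Young's inequality, estimating
\begin{align*}
	\E\bigg[\sup_{t\in[0,T]}\bigg|\int_0^t\|\Y_m^\e(s)\|_\H^{p-2}\big(\B(\e s,\Y_m^\e(s))\d\W(s),\Y_m^\e(s)\big)\bigg|\bigg]
	&\leq C\,\E\bigg[\int_0^T\|\Y_m^\e(s)\|_\H^{2p-2}\|\B(\e s,\Y_m^\e(s))\|_{\L_2}^2\d s\bigg]^{1/2}\\
	&\leq \frac14\,\E\bigg[\sup_{t\in[0,T]}\|\Y_m^\e(t)\|_\H^p\bigg]+C_p\,\E\bigg[\int_0^Tg_1(\e s)\big(1+\|\Y_m^\e(s)\|_\H^p\big)\d s\bigg],
\end{align*}
using \eqref{1.09} and the splitting $\|\Y_m^\e\|_\H^{2p-2}(1+\|\Y_m^\e\|_\H^2)\le C_p(1+\|\Y_m^\e\|_\H^{2p})$ followed by $ab\le \tfrac14 a^2+Cb^2$. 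Absorbing the $\tfrac14\,\E[\sup\|\Y_m^\e\|_\H^p]$ term into the left-hand side and using $\e\le 1$ leaves an inequality of the form
\begin{align*}
	\E\bigg[\sup_{t\in[0,T]}\|\Y_m^\e(t)\|_\H^p+\int_0^T\|\Y_m^\e(s)\|_\H^{p-2}\|\Y_m^\e(s)\|_\V^\beta\d s\bigg]
	\leq C_1+C_2\int_0^T\big(f(s)+g_1(s)\big)\,\E\bigg[\sup_{r\in[0,s]}\|\Y_m^\e(r)\|_\H^p\bigg]\d s,
\end{align*}
with $C_1,C_2$ independent of $m$ and $\e$, where the integrand controls the running supremum because the right-hand side of the post-Itô inequality is nondecreasing in $t$ up to the martingale term. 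A Grönwall argument (with the integrable weight $f+g_1\in\L^1(0,T;\R^+)$) then closes the estimate with a constant depending only on $\|\x\|_\H$, $\|f\|_{\L^1}$, $\|g_1\|_{\L^\infty}$, $p$, $L_\A$, $T$ — but not on $m$ — which is exactly \eqref{lem3.1}.

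The only genuine subtlety is a standard one: the a priori finiteness of $\E[\sup_{t}\|\Y_m^\e(t)\|_\H^p]$ needed to justify absorbing the BDG term and applying Grönwall. This is guaranteed by Theorem \ref{thrm1} (the energy estimate \eqref{1.8} applied to $\Y_m^\e$, noting that $\Y_m^\e(\cdot)=\Y_m(\e\cdot)$ solves \eqref{2.2} with data $\x_m\in\H$), so for each fixed $m$ the left-hand side of \eqref{lem3.1} is finite; the content of the lemma is the \emph{uniformity} in $m$, which is delivered by the uniform boundedness of $\|\x_m\|_\H$ together with the $m$-independence of all other constants. One may alternatively run the argument with stopping times $\tau_R^{m,\e}=\inf\{t:\|\Y_m^\e(t)\|_\H>R\}$ and let $R\to\infty$ via Fatou's lemma to avoid assuming a priori integrability, but invoking \eqref{1.8} is the cleanest route here. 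I expect no other obstacle.
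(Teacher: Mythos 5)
Your proposal follows essentially the same route as the paper: apply the It\^o formula to $\|\Y_m^\e(\cdot)\|_\H^p$, use (H.3), (H.6), BDG and Young's inequalities, absorb the $\sup$-term into the left-hand side, apply Gronwall with the weight $f+g_1$, and observe that $\{\|\x_m\|_\H\}$ is bounded because $\x_m\to\x$ in $\H$. The one point where you deviate is the justification of the a priori integrability needed to absorb the BDG term: you propose invoking the energy estimate \eqref{1.8} of Theorem~\ref{thrm1} (applied to $\Y_m$ and transferred to $\Y_m^\e(\cdot)=\Y_m(\e\cdot)$), whereas the paper instead localizes with the stopping times $\tau_{m,R}^\e=\inf\{t\ge 0:\|\Y_m^\e(t)\|_\H>R\}\wedge T$ and passes $R\to\infty$ via Fatou's lemma at the end — the alternative you yourself flag. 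Both are valid; the paper's stopping-time route is more self-contained (it does not rely on the $p$-th moment bound from the well-posedness theorem), while yours is marginally shorter given that \eqref{1.8} is already available.
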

\begin{proof}
	Applying infinite dimensional It\^o's formula to the process $\|\Y_m^\e(\cdot)\|_\H^p$, we find
		\begin{align}\label{lem3.2}\nonumber
		&	\|\Y_m^\e(t)\|_\H^p +\frac{pL_\A\e}{2}\int_0^t \|\Y_m^\e(s)\|_\H^{p-2}\|\Y_m^\e(s)\|_\V^\beta\d s\\&\nonumber\leq \|\x_m\|_\H^p +C_p  \int_0^{T}\big(f(s)+g_1(s)\big)\d s+C_p\e \int_0^t\big(f(\e s)+g_1(\e s)\big)\|\Y_m^\e(s)\|_\H^p\d s\\&\quad + p\sqrt{\e}\int_0^t \|\Y_m^\e(s)\|_\H^{p-2}\big(\B(\e s,\Y_m^\e(s))\d \W(s),\Y_m^\e(s)\big),
	\end{align}where we  have used similar arguments as in  \eqref{2.6} and \eqref{2.7}. 

Define the following sequence of stopping times  for $R>0$
\begin{align*}
	\tau_{m,R}^\e:=\inf\{t\geq 0: \|\Y_m^\e(t)\|_\H>R\}\wedge T.
\end{align*}
Then, $\tau_{m,R}^\e \to T,\ \P$-a.s., as $R\to\infty$, for every $m\in\mathbb{N}$. Taking supremum  from 0 to $t\leq r\wedge \tau_{m,R}^\e$ and then taking expectation in \eqref{lem3.2}, we obtain
\begin{align}\label{lem3.3}\nonumber
&	\E\bigg[\sup_{t\in[0,r\wedge \tau_{m,R}^\e]}\|\Y_m^\e(t)\|_\H^p\bigg]+\frac{p L_\A\e }{2}\E\bigg[\int_0^{r\wedge \tau_{m,R}^\e}\|\Y_m(s)\|_\H^{p-2}\|\Y_m(s)\|_\V^\beta\d s\bigg]\\& \nonumber\leq \|\x_m\|_\H^p +C_p \int_0^{ T}\big(f(s)+g_1(s)\big)\d s+C_p\e\E\bigg[ \int_0^{r\wedge \tau_{m,R}^\e}\big(f(\e s)+g_1(\e s)\big)\|\Y_m^\e(s)\|_\H^p\d s\bigg]\\&\quad +\underbrace{ p\sqrt{\e}\E\bigg[\sup_{t\in[0,r\wedge \tau_{m,R}^\e]}\bigg|\int_0^t \|\Y_m^\e(s)\|_\H^{p-2}\big(\B(\e s,\Y_m^\e(s))\d \W(s),\Y_m^\e(s)\big)\bigg|\bigg]}_{=:I(t)}.
\end{align}We consider the final term $I(\cdot)$ of the right hand side of the above inequality \eqref{lem3.3} and estimate it using the Burkholder-Davis-Gundy inequality (see Theorem 1.1, \cite{DLB}), Hypothesis \ref{hyp2} (H.6), H\"older's and Young's inequalities as
\begin{align}\label{lem3.4}\nonumber
	I(t)& \leq C_p\sqrt{\e} \E\bigg[\int_0^{r\wedge \tau_{m,R}^\e}\|\Y_m^\e(s)\|_\H^{2p-2}\|\B(\e s,\Y_m^\e(s))\|_{\L_2}^2\d s\bigg]^{\frac{1}{2}} \\&\nonumber\leq C_p\sqrt{\e} \E\bigg[\sup_{s\in[0,r\wedge \tau_{m,R}^\e]}\|\Y_m^\e(s)\|_\H^p\int_0^{r\wedge \tau_{m,R}^\e}\|\Y_m^\e(s)\|_\H^{p-2}\|\B(\e s,\Y_m^\e(s))\|_{\L_2}^2\d s\bigg]^\frac{1}{2} \\&\nonumber\leq  \frac{1}{2} \E\bigg[\sup_{s\in[0,r\wedge \tau_{m,R}^\e]}\|\Y_m^\e(s)\|_\H^p\bigg]+C_p\e \E\bigg[\int_0^{r\wedge \tau_{m,R}^\e}\|\Y_m^\e(s)\|_\H^{p-2}\|\B(\e s,\Y_m^\e(s))\|_{\L_2}^2\d s\bigg]\\&\leq  \frac{1}{2} \E\bigg[\sup_{s\in[0,r\wedge \tau_{m,R}^\e]}\|\Y_m^\e(s)\|_\H^p\bigg]+C_p\int_0^Tg_1(s)\d s+C_p\e\bigg( \E\bigg[\int_0^{r\wedge \tau_{m,R}^\e}g_1(\e s)\|\Y_m^\e(s)\|_\H^p\d s\bigg]\bigg).
\end{align}Substituting \eqref{lem3.4} in \eqref{lem3.3}, we deduce 
\begin{align}\label{lem3.5}\nonumber
	&	\E\bigg[\sup_{t\in[0,r\wedge \tau_{m,R}^\e]}\|\Y_m^\e(t)\|_\H^p\bigg]+\frac{p L_\A\e }{2}\E\bigg[\int_0^{r\wedge \tau_{m,R}^\e}\|\Y_m(s)\|_\H^{p-2}\|\Y_m(s)\|_\V^\beta\d s\bigg]\\& \leq 
2	\|\x_m\|_\H^p +C_p \int_0^{ t}\big(f(s)+g_1(s)\big)\d s+C_p\e\E\bigg[ \int_0^{r\wedge \tau_{m,R}^\e}\big(f(\e s)+g_1(\e s)\big)\|\Y_m^\e(s)\|_\H^p\d s\bigg]. 
\end{align}Applying Gronwall's inequality in  \eqref{lem3.5} and substituting back in \eqref{lem3.5}, we deduce
\begin{align}\label{lem3.6}\nonumber
	&	\E\bigg[\sup_{t\in[0,r\wedge \tau_{m,R}^\e]}\|\Y_m^\e(t)\|_\H^p\bigg]+\frac{p L_\A\e }{2}\E\bigg[\int_0^{r\wedge \tau_{m,R}^\e}\|\Y_m(s)\|_\H^{p-2}\|\Y_m(s)\|_\V^\beta\d s\bigg]
	 \\&\leq \bigg(2	\|\x_m\|_\H^p +C_p  \int_0^{ T}\big(f(s)+g_1(s)\big)\d s \bigg)e^{C_p\int_0^{ T}(f(s)+g_1(s))\d s }\leq C,
	\end{align}where we have used the fact that  $f\in \L^1(0,T;\R^+)$ and $g_1\in\L^\infty(0,T;\R^+)$. 

Now our aim is show that the constant $C$ is independent of $m$. In view of \eqref{lem3.6}, it is enough to show that the bound of $\|\x_m\|_\H$ is independent of $m$.   We know that the sequence $\{\x_m\}_{m\in\N}$ is in the space $\V$ with $\lim\limits_{m\to\infty}\|\x_m-\x\|_\H=0$, which implies for any given $\delta>0$, there exists a natural number $M$ such that $$\|\x_m-\x\|_\H\leq \delta, \ \text{ for all } \ m\geq M.$$ 
Then, for all $m\geq M$, we have
\begin{align}\label{lem3.7}
	\|\x_m\|_\H \leq \|\x_m-\x\|_\H+\|\x\|_\H \leq \delta+\|\x\|_\H <\infty.
\end{align}For $m< M$, we have 
\begin{align}\label{lem3.8}
	\|\x_m\|_\H \leq \|\x_m\|_\H+2\|\x\|_\H \leq \max_{1\leq m\leq M}\|\x_m\|_\H+2\|\x\|_\H<\infty.
\end{align}Combining \eqref{lem3.6}-\eqref{lem3.8}, we conclude that the constant $C$ is independent of $m$. Passing $M\to\infty$, and applying Fatou's lemma, we obtain the required result \eqref{lem3.1}.
\end{proof}

Let us establish the exponential convergence of $\Y_m^\e(\cdot)-\Y^\e(\cdot)$.
\begin{lemma}\label{lemma4}
For any $\varpi>0$, we have 
\begin{align}\label{2.23}
	\lim_{m\to\infty}\sup_{\e\in(0,1]}\e\log\P\bigg\{\sup_{t\in[0,T]}\|\Y_m^\e(t)-\Y^\e(t)\|_\H^2>\varpi\bigg\}=-\infty.
\end{align}
\end{lemma}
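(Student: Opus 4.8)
\emph{Proof plan.} Write $\boldsymbol\zeta_m^\e(\cdot):=\Y_m^\e(\cdot)-\Y^\e(\cdot)$; subtracting the two copies of \eqref{2.2} shows that $\boldsymbol\zeta_m^\e$ solves a \emph{homogeneous} equation (it vanishes identically when $\x_m=\x$), and the whole argument is powered by the smallness \eqref{2.16} together with the uniform energy bound \eqref{2.17}. First I would apply the infinite-dimensional It\^o formula to $\|\boldsymbol\zeta_m^\e(\cdot)\|_\H^2$ and use the local monotonicity \eqref{1.2} to get, $\P$-a.s. for all $t\in[0,T]$,
\begin{align*}
	\|\boldsymbol\zeta_m^\e(t)\|_\H^2\leq\|\x_m-\x\|_\H^2+\e\int_0^t\varphi_m^\e(s)\|\boldsymbol\zeta_m^\e(s)\|_\H^2\,\d s+2\sqrt\e\int_0^t\!\big((\B(\e s,\Y_m^\e(s))-\B(\e s,\Y^\e(s)))\d\W(s),\boldsymbol\zeta_m^\e(s)\big),
\end{align*}
with $\varphi_m^\e(s):=f(\e s)+|\rho(\Y_m^\e(s))|+|\eta(\Y^\e(s))|\geq0$. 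I would then localise by
$$\tau_{m,R}^\e:=\inf\Big\{t\geq0:\big(|\Y_m^\e|_\H^\V(t)\big)^2\vee\big(|\Y^\e|_\H^\V(t)\big)^2>R\Big\}\wedge T,$$
using the functional of Lemma~\ref{lemma1} with $p=2$. On $[0,\tau_{m,R}^\e]$ the growth estimate \eqref{1.3} (with $\|\Y_\bullet^\e(s)\|_\H\le\sqrt R$), the bound $\e\int_0^{\tau_{m,R}^\e}\|\Y_\bullet^\e(s)\|_\V^\beta\,\d s\leq R/L_\A$, and $\e\int_0^Tf(\e s)\,\d s\leq\|f\|_{\L^1(0,T)}$ (here $\e\leq1$) give an $(\e,m)$-independent constant $C_R$ with $\e\int_0^{t\wedge\tau_{m,R}^\e}\varphi_m^\e(s)\,\d s\leq C_R$. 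Since
$$\P\Big\{\sup_{t\in[0,T]}\|\boldsymbol\zeta_m^\e(t)\|_\H^2>\varpi\Big\}\leq\P\Big\{\sup_{t\in[0,T]}\|\boldsymbol\zeta_m^\e(t\wedge\tau_{m,R}^\e)\|_\H^2>\varpi\Big\}+\P\big\{\tau_{m,R}^\e<T\big\},$$
and $\sup_m\sup_{\e\in(0,1]}\e\log\P\{\tau_{m,R}^\e<T\}\to-\infty$ as $R\to\infty$ by \eqref{2.17} (applied to $\Y_m^\e$) and Lemma~\ref{lemma1} (applied to $\Y^\e$), the task reduces, for each fixed $R$, to making the first probability $\e\log$-small uniformly in $\e$ as $m\to\infty$.

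To dispose of the \emph{random, a priori unbounded} drift multiplier $\varphi_m^\e$, I would use an integrating factor: set $\lambda(t):=\e\int_0^{t\wedge\tau_{m,R}^\e}\varphi_m^\e(s)\,\d s$ (continuous, adapted, nondecreasing, bounded by $C_R$) and apply the product rule to $e^{-\lambda(t)}\|\boldsymbol\zeta_m^\e(t\wedge\tau_{m,R}^\e)\|_\H^2$. The drift becomes nonpositive and one is left with
$$\big\|\boldsymbol\zeta_m^\e(t\wedge\tau_{m,R}^\e)\big\|_\H^2\leq e^{C_R}\Big(\|\x_m-\x\|_\H^2+\sup_{r\in[0,t]}\big|\widetilde M_m^\e(r)\big|\Big),\qquad\widetilde M_m^\e(r):=\int_0^{r\wedge\tau_{m,R}^\e}e^{-\lambda(s)}\,\d M_m^\e(s),$$
where $M_m^\e$ is the stochastic integral above; by the localisation $M_m^\e$ has bounded quadratic variation, so $\widetilde M_m^\e$ is a genuine continuous martingale, and by \eqref{1.11}, $\langle\widetilde M_m^\e\rangle_T\leq4\e\,\|L_\B\|_{\L^\infty(0,T)}\int_0^{T\wedge\tau_{m,R}^\e}\|\boldsymbol\zeta_m^\e(s)\|_\H^4\,\d s$.

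With $V_m^\e(t):=\sup_{r\in[0,t]}\|\boldsymbol\zeta_m^\e(r\wedge\tau_{m,R}^\e)\|_\H^2$, I would now split $\|\boldsymbol\zeta_m^\e\|_\H^4=\|\boldsymbol\zeta_m^\e\|_\H^2\cdot\|\boldsymbol\zeta_m^\e\|_\H^2$, bound one factor by $V_m^\e(T)$, and combine the moment inequality \eqref{2.9} (Burkholder--Davis--Gundy with the sharp constant $q^{1/2}$) with the Cauchy--Schwarz, Young and Minkowski inequalities to reach, for every $q\geq2$, a \emph{linear} Gronwall inequality
$$\big\{\E\big[(V_m^\e(t))^q\big]\big\}^{1/q}\leq 2e^{C_R}\|\x_m-\x\|_\H^2+Ce^{2C_R}\|L_\B\|_{\L^\infty(0,T)}\,q\e\int_0^t\big\{\E\big[(V_m^\e(s))^q\big]\big\}^{1/q}\,\d s$$
(all moments being finite because $V_m^\e(T)\leq4R$ on $[0,\tau_{m,R}^\e]$), hence $\{\E[(V_m^\e(T))^q]\}^{1/q}\leq2e^{C_R}\|\x_m-\x\|_\H^2\exp(Ce^{2C_R}\|L_\B\|_{\L^\infty}q\e T)$. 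The point is that the right-hand side is \emph{proportional to} $\|\x_m-\x\|_\H^2$ with no additive constant, and that the exponent enters only through $q\e$; choosing $q:=\max\{2,1/\e\}$ (so $q\e\in[1,2]$) makes the bound $\e$-independent, say $\leq\Gamma_R\|\x_m-\x\|_\H^2$, and Markov's inequality gives
$$\e\log\P\big\{V_m^\e(T)>\varpi\big\}\leq-q\e\log\varpi+q\e\log\big(\Gamma_R\|\x_m-\x\|_\H^2\big)\leq 2|\log\varpi|+\log\big(\Gamma_R\|\x_m-\x\|_\H^2\big),$$
independent of $\e$ and, by \eqref{2.16}, tending to $-\infty$ as $m\to\infty$. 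To conclude \eqref{2.23}, I would use $\e\log(a+b)\leq\log2+\max(\e\log a,\e\log b)$: given $N>0$, first fix $R$ with $\sup_m\sup_{\e\in(0,1]}\e\log\P\{\tau_{m,R}^\e<T\}<-N$, then fix $M$ with $\sup_{\e\in(0,1]}\e\log\P\{V_m^\e(T)>\varpi\}<-N$ for $m\geq M$, which yields $\sup_{\e\in(0,1]}\e\log\P\{\sup_{[0,T]}\|\boldsymbol\zeta_m^\e\|_\H^2>\varpi\}\leq\log2-N$ for all $m\geq M$.

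The main obstacle is the interplay between the \emph{unbounded} local-monotonicity coefficients $\rho,\eta$ and the moment estimate. Because of \eqref{1.2}--\eqref{1.3} the $\boldsymbol\zeta_m^\e$-equation carries an uncontrolled random drift, forcing the localisation of Step~1, but $\tau_{m,R}^\e$ only bounds $\e\int_0^\cdot\varphi_m^\e$ by the possibly large $C_R$, which enters \emph{exponentially} through the integrating factor; one must therefore arrange the Gronwall step so that its exponent stays proportional to $q\e$ (rather than picking up a factor like $q^{q/2}$), which is exactly what the $\|\boldsymbol\zeta_m^\e\|_\H^4=\|\boldsymbol\zeta_m^\e\|_\H^2\cdot\|\boldsymbol\zeta_m^\e\|_\H^2$ splitting achieves, so that the choice $q\sim1/\e$ survives and the limits $R\to\infty$, $m\to\infty$ can be taken. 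The integrating factor itself is what keeps the estimate cleanly proportional to $\|\x_m-\x\|_\H^2$ and avoids the otherwise awkward correlation between $\varphi_m^\e(s)$ and $\sup_{r\leq s}\|\boldsymbol\zeta_m^\e(r)\|_\H^2$.
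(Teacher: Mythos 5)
Your proposal is correct and follows essentially the same route as the paper: apply It\^o's formula to the squared $\H$-norm of the difference, kill the local-monotonicity drift with the integrating factor $\exp\big(-\e\int_0^{\cdot}[f(\e s)+|\rho(\Y_m^\e(s))|+|\eta(\Y^\e(s))|]\,\d s\big)$, control the random exponent by stopping at an energy level $R$ via the functional of Lemma~\ref{lemma1}, run a BDG-Gronwall estimate with the sharp constant $\sqrt{q}$ from \eqref{2.9}, take $q\sim1/\e$, and finish with Markov's inequality together with the uniform tail bound \eqref{2.17}. The only cosmetic differences from the paper's version are that you extract the bounded factor $e^{C_R}$ before the martingale estimate rather than after the Gronwall step, you work with $|\rho|+|\eta|$ (so that the integrating factor is $\le1$, which you use to bound $\langle\widetilde M_m^\e\rangle$) while the paper uses $\rho+\eta$ directly (and only needs $\phi_m^{-1}\le e^{C_R}$ on the stopped set), and you merge the two stopping times $\tau_{1,R}^\e,\tau_{2,R}^\e$ of \eqref{2.24} into a single one built from $|\cdot|_\H^\V(\cdot)$ with $p=2$; none of these change the substance of the argument.
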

\begin{proof}
	For any $R>0$, we define a sequence of stopping times as 
	\begin{align}\label{2.24}	\nonumber
		\tau_{1,R}^\e&:=\inf\bigg\{t\geq 0:\e\int_0^t\|\Y^\e(s)\|_\H^{p-2}\|\Y^\e(s)\|_\V^\beta\d s>R,\  \text{ or }\  \|\Y^\e(t)\|_\H^2>R\bigg\} ,\\
			\tau_{2,R}^\e&:=\inf\bigg\{t\geq 0: \e\int_0^t\|\Y_m^\e(s)\|_\H^{p-2}\|\Y_m^\e(s)\|_\V^\beta\d s>R, \ \text{ or }\  \|\Y_m^\e(t)\|_\H^2>R\bigg\},
	\end{align}where the sequence of stopping times $\tau_{2,R}^\e$ is possible because of Lemma \ref{lemma3}. 

Set $\tau_R^\e =\tau_{1,R}^\e\wedge \tau_{2,R}^\e$. Applying infinite dimensional It\^o's formula to the process $\phi_m(\cdot)\|\Y^\e(\cdot)-\X^\e(\cdot)\|_\H^2$ (see Theorem 1.2, \cite{IGDS}), where $\phi_m(\cdot)$ is given by $$\phi_m(t)=\exp\bigg(-\e\int_0^t\big[f(\e s)+\rho(\Y_m^\e(s))+\eta(\Y^\e(s))\big]\d s\bigg),$$ to find 
\begin{align}\label{2.25}	\nonumber
	&\phi_m(\t)\|\Y_m^\e(t)-\Y^\e(t)\|_\H^2\\&	\nonumber= \|\x_m-\x\|_\H^2+\e\int_0^{\t}\phi_m(s)\bigg(2\langle \A(\e s,\Y_m^\e(s))-\A(\e s,\Y^\e(t)),\Y_m^\e(s)-\Y^\e(s)\rangle\\&\qquad	\nonumber +\| \B(\e s,\Y_m^\e(s))-\B(\e s,\Y^\e(s))\|_{\L_2}^2-\big[f(\e s)+\rho(\Y_m^\e(s))+\eta(\Y^\e(s))\big]\bigg)\d s\\&\nonumber\quad +2\sqrt{\e}\int_0^{\t}\phi_m(s)\big((\B(\e s,\Y_m^\e(s))-\B(\e s,\Y^\e(s)))\d \W(s),\Y_m^\e(s)-\Y^\e(s)\big) \\&\nonumber\leq 
	\|\x_m-\x\|_\H^2 \\&\quad+2\sqrt{\e}\int_0^{\t}\phi_m(s)\big((\B(\e s,\Y_m^\e(s))-\B(\e s,\Y^\e(s)))\d \W(s),\Y_m^\e(s)-\Y^\e(s)\big),
\end{align}where we have used Hypothesis \ref{hyp1} (H.2). Moreover, we have 
\begin{align}\label{2.26}\nonumber
&	\bigg\{\E\bigg[\sup_{s\in[0,\t]} \phi_m(s)\|\Y_m^\e(s)-\Y^\e(s)\|_\H^2\bigg]^q\bigg\}^\frac{2}{q} \\&\nonumber\leq 2\|\x_m-\x\|_\H^4\\&\quad+\underbrace{4\e \bigg\{\E\bigg[\sup_{s\in[0,\t]}\bigg|\int_0^s \phi_m(s)\big((\B(\e s,\Y_m^\e(s))-\B(\e s,\Y^\e(s)))\d \W(s),\Y_m^\e(s)-\Y^\e(s)\big)\bigg| \bigg]^q\bigg\}^{\frac{2}{q}}}_{=:J(t)}
\end{align}
Using the result \eqref{2.9}, Hypothesis \ref{hyp2} (H.6), Minkowski's, H\"older's and Young's inequalities in the final term $J(\cdot)$ of the above inequality \eqref{2.26},  we arrive at 
\begin{align}\label{2.27}\nonumber
	J(t) &\leq 4C\e\sqrt{q} \bigg\{\E\bigg[\int_0^{\t} \phi_m^2(s)\|\B(\e s,\Y_m^\e(s))-\B(\e s,\Y^\e(s))\|_{\L_2}^2\|\Y_m^\e(s)-\Y^\e(s)\|_\H^2\d s\bigg]^{\frac{q}{2}}\bigg\}^{\frac{2}{q}}\\&\nonumber\leq 4C\e\sqrt{q} \bigg\{\E\bigg[\sup_{s\in[0,\t]}\phi_m(s)\|\Y_m^\e(s)-\Y^\e(s)\|_\H^2\\&\nonumber\qquad\times\int_0^{\t} \phi_m(s)\|\B(\e s,\Y_m^\e(s))-\B(\e s,\Y^\e(s))\|_{\L_2}^2\d s\bigg]^{\frac{q}{2}}\bigg\}^{\frac{2}{q}}\\&\nonumber\leq  \frac{1}{2}\bigg\{\E\bigg[\sup_{s\in[0,\t]}\phi_m(s)\|\Y_m^\e(s)-\Y^\e(s)\|_\H^2\bigg]^q\bigg\}^\frac{2}{q}\\&\nonumber\quad+8Cq\e^2 \bigg\{\E\bigg[\int_0^{\t}\phi_m(s)\|\B(\e s,\Y_m^\e(s))-\B(\e s,\Y^\e(s))\|_{\L_2}^2\d s\bigg]^q\bigg\}^{\frac{2}{q}}
	\\&\nonumber\leq  \frac{1}{2}\bigg\{\E\bigg[\sup_{s\in[0,\t]}\phi_m(s)\|\Y_m^\e(s)-\Y^\e(s)\|_\H^2\bigg]^q\bigg\}^\frac{2}{q}\\&\nonumber\quad+8Cq\e^2 \bigg(\int_0^{\t}L_\B(\e s )\bigg\{\E\bigg[\phi_m(s)\|\Y_m^\e(s)-\Y^\e(s)\|_\H^2\bigg]^q\bigg\}^{\frac{1}{q}}\d s\bigg)^2 \\&\nonumber\leq \frac{1}{2}\bigg\{\E\bigg[\sup_{s\in[0,\t]}\phi_m(s)\|\Y_m^\e(s)-\Y^\e(s)\|_\H^2\bigg]^q\bigg\}^\frac{2}{q}\\&\quad+8Cq\e \bigg(\int_0^{T}L_\B(s)\d s\bigg) \bigg(\int_0^{t}L_\B(\e s ) \bigg\{\E\bigg[\sup_{r\in[0,\s]}\phi_m(s)\|\Y_m^\e(s)-\Y^\e(s)\|_\H^2\d s\bigg]^q\bigg\}^{\frac{2}{q}}\d s\bigg).
\end{align}
Substituting \eqref{2.27} in \eqref{2.26}, and then applying Gronwall's inequality, we deduce
\begin{align}\label{2.28}\nonumber
	&	\bigg\{\E\bigg[\sup_{s\in[0,\t]} \phi_m(s)\|\Y_m^\e(s)-\Y^\e(s)\|_\H^2\bigg]^q\bigg\}^\frac{2}{q} \\&\leq 4\|\x_m-\x\|_\H^4\exp\bigg\{16Cq\e T^2 \bigg(\esssup_{s\in[0,T]}L_\B(s)\bigg)^2\bigg\}.
	\end{align}Therefore, we have 
\begin{align}\label{2.29}\nonumber
		&	\bigg\{\E\bigg[\sup_{s\in[0,\t]} \|\Y_m^\e(s)-\Y^\e(s)\|_\H^{2q}\bigg]\bigg\}^\frac{2}{q} \\&\nonumber\leq 
		\left\{\E\bigg[\sup_{s\in[0,\T]}\bigg\{e^{-\e\int_0^t[f(\e s)+\rho(\Y_m^\e(s))+\eta(\Y^\e(s))]\d s} \|\Y_m^\e(s)-\Y^\e(s)\|_\H^2  \bigg\}^q \right.  \\&\nonumber \left.\qquad \times e^{q\e\int_0^t[f(\e s)+\rho(\Y_m^\e(s))+\eta(\Y^\e(s))]\d s}\bigg]\right\}^\frac{2}{q}
		\\&\nonumber\leq 
		e^{2\big[\e\int_0^Tf(\e s)\d s+C(1+R^\frac{\zeta}{2})(\e T+R)\big]}\bigg\{\E\bigg[\sup_{s\in[0,\T]}e^{-\e\int_0^t[f(\e s)+\rho(\Y_m^\e(s))+\eta(\Y^\e(s))]\d s} \|\Y_m^\e(s)-\Y^\e(s)\|_\H^2  \bigg]^q \bigg\}^\frac{2}{q}		
		 \\&\leq 4\|\x_m-\x\|_\H^4\exp\bigg\{2 C(1+R^\frac{\zeta}{2})(\e T+R)+2\int_0^Tf(s)\d s+ 16Cq\e T^2 \bigg(\esssup_{s\in[0,T]}L_\B(s)\bigg)^2\bigg\},
\end{align}where we have used \eqref{1.3}, \eqref{2.28} and the definition of stopping times $\tau_R^\e$.

 Fixing $R$ and taking $q=\frac{2}{\e},$ and applying  Markov's  inequality, we conclude
\begin{align}\label{2.30}\nonumber
&	\sup_{\e\in(0,1]} \e\log\P\bigg\{\sup_{t\in[0,\T]}\|\Y_m^\e(t)-\Y^\e(t)\|_\H^2>\varpi\bigg\} \\&\nonumber\leq \sup_ {\e\in(0,1]} \e\log \bigg\{\varpi^{-q}\E\bigg[\sup_{t\in[0,\T]}\|\Y_m^\e(t)-\Y^\e(t)\|_\H^{2q}\bigg]\bigg\} \\&\nonumber\leq -2\log\varpi+\log\left\{ 4\|\x_m-\x\|_\H^4\right\}+ 2C(1+R^\frac{\zeta}{2})(T+R)+ \int_0^Tf(s)\d s \nonumber\\&\quad\nonumber+ 32C T^2 \bigg(\esssup_{s\in[0,T]}L_\B(s)\bigg)^2
\\&\to -\infty \ \text{ as } \ m\to\infty,
\end{align}where we have used the fact that $f\in \L^1(0,T;\R^+)$ and  $L_\B\in \L^\infty(0,T;\R^+)$ and \eqref{2.16}. 

Applying Lemma \ref{lemma1}, we find for any $M>0$, there exists a constant $R>0$ such that for any $\e\in(0,1]$, the following holds:
\begin{align}\label{2.31}
\P\bigg\{\bigg(|\Y^\e|_\H^\V(T)\bigg)^p>R\bigg\}\leq e^{-\frac{M}{\e}}.	
\end{align}For such a constant $R$, combining \eqref{2.30} and the definition of stopping times \eqref{2.24}, there exists an integrer $N>0$, such that for any $m\geq N$, 
\begin{align}\label{2.32}\nonumber
	&\sup_{\e\in(0,1]}\e \log\P \bigg\{\sup_{t\in[0,T]}\|\Y_m^\e(t)-\Y^\e(t)\|_\H^2>\varpi,\; \bigg(|\Y^\e|_\H^\V(T)\bigg)^p\leq R\bigg\} \\&\leq \sup_{\e\in(0,1]}\e \log\P \bigg\{\sup_{t\in[0,\T]}\|\Y_m^\e(t)-\Y^\e(t)\|_\H^2>\varpi\bigg\} \leq -M.
\end{align}
Combining \eqref{2.31} and \eqref{2.32}, we conclude that there exists an integer $N>0$ such that for any $m\geq N$, $\e\in (0,1]$, 
\begin{align}\label{2.33}
	&\sup_{\e\in(0,1]}\e \log\P \bigg\{\sup_{t\in[0,T]}\|\Y_m^\e(t)-\Y^\e(t)\|_\H^2>\varpi\bigg\} \leq -2 M.
\end{align}
Since the choice of the constant $M$ is arbitrary, the proof follows.
\end{proof}
\begin{lemma}\label{lemma5}
	For any $\varpi>0$, we have 
	\begin{align}\label{2.34}
		\lim_{m\to\infty} \sup_{\e\in(0,1]} \e\log \P\bigg\{\|\X_m^\e(t)-\X^\e(t)\|_\H^2>\varpi\bigg\}=-\infty.
	\end{align}
\end{lemma}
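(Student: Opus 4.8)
The plan is to mimic the proof of Lemma \ref{lemma4}, but in a substantially simpler form, since $\X_m^\e(\cdot)$ and $\X^\e(\cdot)$ solve the \emph{zero-drift} equations \eqref{2.03} and \eqref{2.3}. First I would subtract the two equations to obtain
\begin{align*}
\X_m^\e(t)-\X^\e(t)=(\x_m-\x)+\sqrt{\e}\int_0^t\big(\B(\e s,\X_m^\e(s))-\B(\e s,\X^\e(s))\big)\d\W(s),
\end{align*}
in $\V^*$, for all $t\in[0,T]$, $\P$-a.s., and apply the infinite dimensional It\^o formula (Theorem 1.2, \cite{IGDS}) to $\|\X_m^\e(\cdot)-\X^\e(\cdot)\|_\H^2$. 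Since there is no contribution from $\A(\cdot,\cdot)$, the only drift term produced is $\e\int_0^t\|\B(\e s,\X_m^\e(s))-\B(\e s,\X^\e(s))\|_{\L_2}^2\d s$, which by Hypothesis \ref{hyp2} (H.6), specifically \eqref{1.11}, is bounded by $\e\int_0^t L_\B(\e s)\|\X_m^\e(s)-\X^\e(s)\|_\H^2\d s$. In particular, neither the exponential weight $\phi_m(\cdot)$ nor the localizing stopping times $\tau_R^\e$ from the proof of Lemma \ref{lemma4} are needed here.

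Next I would take the supremum over $[0,T]$, raise to the power $q\geq2$, take expectations, and estimate the stochastic integral exactly as in \eqref{2.26}--\eqref{2.27}: apply the martingale moment inequality \eqref{2.9}, factor out $\sup_{s}\|\X_m^\e(s)-\X^\e(s)\|_\H^2$ from the quadratic variation, use Young's inequality to absorb half of the supremum term into the left-hand side, and then use \eqref{1.11}, Cauchy--Schwarz and Minkowski's inequality on what remains. This produces an inequality of the form
\begin{align*}
\bigg\{\E\bigg[\sup_{t\in[0,T]}\|\X_m^\e(t)-\X^\e(t)\|_\H^{2q}\bigg]\bigg\}^{\frac{2}{q}}\leq 4\|\x_m-\x\|_\H^4+Cq\e\bigg(\int_0^T L_\B(s)\d s\bigg)\int_0^T L_\B(\e s)\bigg\{\E\bigg[\sup_{r\in[0,s]}\|\X_m^\e(r)-\X^\e(r)\|_\H^{2q}\bigg]\bigg\}^{\frac{2}{q}}\d s,
\end{align*}
to which Gronwall's inequality applies, yielding
\begin{align*}
\bigg\{\E\bigg[\sup_{t\in[0,T]}\|\X_m^\e(t)-\X^\e(t)\|_\H^{2q}\bigg]\bigg\}^{\frac{2}{q}}\leq 4\|\x_m-\x\|_\H^4\exp\bigg\{Cq\e T^2\bigg(\esssup_{s\in[0,T]}L_\B(s)\bigg)^2\bigg\}.
\end{align*}

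Finally, I would take $q=\frac{2}{\e}$ and apply Markov's inequality to get, uniformly in $\e\in(0,1]$,
\begin{align*}
\e\log\P\bigg\{\sup_{t\in[0,T]}\|\X_m^\e(t)-\X^\e(t)\|_\H^2>\varpi\bigg\}\leq-2\log\varpi+\log\big(4\|\x_m-\x\|_\H^4\big)+2CT^2\bigg(\esssup_{s\in[0,T]}L_\B(s)\bigg)^2,
\end{align*}
and then let $m\to\infty$, using $L_\B\in\L^\infty(0,T;\R^+)$ and \eqref{2.16} (so that $\|\x_m-\x\|_\H\to0$) to send the right-hand side to $-\infty$; this proves \eqref{2.34}, indeed with $\sup_{t\in[0,T]}$, which is what the triangle-inequality argument of the following lemma will require. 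I do not anticipate a genuine obstacle: this step is strictly easier than Lemma \ref{lemma4} because the zero-drift structure eliminates the $\rho,\eta$ terms and hence the stopping times. The one point needing attention is that the Young/Gronwall bookkeeping must leave the final bound \emph{purely multiplicative} in $\|\x_m-\x\|_\H^4$, with no additive remainder surviving the Gronwall step; this is exactly what lets the $m\to\infty$ limit annihilate the estimate, and it works precisely because the absence of $\A$ removes the additive $\int_0^T(f+g_1)\d s$ contribution that appears in the energy estimates of Lemmas \ref{lemma1} and \ref{lemma3}.
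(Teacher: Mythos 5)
Your proposal matches the paper's proof almost verbatim: both subtract \eqref{2.3} from \eqref{2.03}, apply It\^o's formula to the squared $\H$-norm of the difference, observe that the zero-drift structure eliminates the need for the exponential weight $\phi_m$ and the stopping times of Lemma \ref{lemma4}, estimate the martingale term via \eqref{2.9}, bound the quadratic variation using the Lipschitz condition \eqref{1.11}, close with Gronwall to obtain the purely multiplicative bound \eqref{2.38}, and then take $q=2/\e$ and apply Markov's inequality and $\|\x_m-\x\|_\H\to0$. Your observation that the bound must have no additive remainder surviving Gronwall is exactly the structural point the paper's \eqref{2.38} relies on, and your remark that the conclusion should and does hold with $\sup_{t\in[0,T]}$ inside the probability (needed for \eqref{2.52}) correctly identifies the statement \eqref{2.34} as missing a supremum that the argument in fact delivers.
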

\begin{proof}
	From \eqref{2.3} and \eqref{2.03}, we have 
	\begin{align}\label{2.35}
		\X_m^\e(t)-\X^\e(t)=\x_m-\x+\sqrt{\e} \int_0^t\big(\B(\e s,\X_m^\e(s))-\B(\e s,\X^\e(s))\big)\d \W(s).
	\end{align}
Applying infinite dimensional It\^o's formula to the proces $\|	\X_m^\e(\cdot)-\X^\e(\cdot)\|_\H^2$, we find 
\begin{align}\label{2.36}\nonumber
	\|	\X_m^\e(t)-\X^\e(t)\|_\H^2&= \|\x_m-\x\|_\H^2+\e\int_0^t \|\B(\e s,\X_m^\e(s))-\B(\e s,\X^\e(s))\|_{\L_2}^2\d s\\&\quad + 2\sqrt{\e}\int_0^t\big(\B(\e s,\X_m^\e(s))-\B(\e s,\X^\e(s))\d \W(s),\X_m^\e(s)-\X^\e(s) \big).
\end{align}Then, for any $q>1$, we have 
\begin{align}\label{2.37}\nonumber
&	\bigg\{\E\bigg[	\|	\X_m^\e(t)-\X^\e(t)\|_\H^{2q}\bigg]\bigg\}^\frac{2}{q} \\&\leq 4\|\x_m-\x\|_\H^4+16Cq\e \bigg(\int_0^TL_\B(s)\d s\bigg)\bigg(\int_0^TL_\B(\e s)\bigg\{\E\bigg[\|	\X_m^\e(s)-\X^\e(s)\|_\H^{2q}\bigg]\bigg\}^\frac{2}{q}\d s\bigg),
\end{align}
where we have used a  similar calculation as in \eqref{2.27} and the constant $C$ is independent of $\e$ and $q$.

An application of Gronwall's inequality in \eqref{2.37} yields
\begin{align}\label{2.38}
	\bigg\{\E\bigg[	\|	\X_m^\e(t)-\X^\e(t)\|_\H^{2q}\bigg]\bigg\}^\frac{2}{q} \leq 4\|\x_m-\x\|_\H^4 \exp\bigg\{16Cq\e T^2 \bigg(\esssup_{s\in[0,T]}L_\B(s)\bigg)^2\bigg\}.
\end{align}Using the same arguments as in the proof of \eqref{2.30} in Lemma \ref{lemma4}, we obtain the required result \eqref{2.34}. 
\end{proof}
The following lemma deals with the exponential equivalency of two families $\{\Y_m^\e\}$ and $\{\X_m^\e\}$.
\begin{lemma}\label{lemma6}
	For any $\varpi>0$, and any $m\in\N$, we have 
	\begin{align}\label{2.39}
		\lim_{\e\to0}\e\log\P\bigg\{\sup_{t\in[0,T]}\|\Y_m^\e(t)-\X_m^\e(t)\|_\H^2>\varpi\bigg\}=-\infty.
	\end{align}
\end{lemma}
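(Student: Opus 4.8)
The plan is to run the scheme of Lemma~\ref{lemma4}, the essential difference being that $\Y_m^\e(\cdot)$ and $\X_m^\e(\cdot)$ now start from the \emph{same} point $\x_m$, so the smallness that sends $\e\log\P\{\cdot\}$ to $-\infty$ must be extracted from the factor $\e$ in front of the drift of \eqref{2.2}. Subtracting \eqref{2.03} from \eqref{2.2} and writing $\Z_m^\e:=\Y_m^\e-\X_m^\e$, one has in $\V^*$
\begin{align*}
	\Z_m^\e(t)=\e\int_0^t\A(\e s,\Y_m^\e(s))\d s+\sqrt{\e}\int_0^t\big(\B(\e s,\Y_m^\e(s))-\B(\e s,\X_m^\e(s))\big)\d\W(s),
\end{align*}
with $\Z_m^\e(0)=\x_m-\x_m=0$. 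For $R>0$ I would localize with the stopping time
\begin{align*}
	\tau_R^\e:=\inf\bigg\{t\geq0:\ \|\Y_m^\e(t)\|_\H^2+\e\int_0^t\|\Y_m^\e(s)\|_\V^\beta\d s>R\ \text{ or }\ \|\X_m^\e(t)\|_\V^2>R\bigg\}\wedge T,
\end{align*}
which is admissible by Lemmas~\ref{lemma3} and~\ref{lemma2}; moreover, combining \eqref{2.17} (with $p=2$) with Lemma~\ref{lemma2}, for every $M>0$ there is $R_0=R_0(M)$ with $\sup_{\e\in(0,1]}\e\log\P\{\tau_{R_0}^\e<T\}\leq-M$.

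The core step is to apply the infinite-dimensional It\^o formula (Theorem 1.2, \cite{IGDS}) to $\phi_m(\t)\|\Z_m^\e(\t)\|_\H^2$ with
\begin{align*}
	\phi_m(t):=\exp\bigg(-\e\int_0^t\big[f(\e s)+\rho(\Y_m^\e(s))+\eta(\X_m^\e(s))\big]\d s\bigg).
\end{align*}
Writing $\langle\A(\e s,\Y_m^\e),\Z_m^\e\rangle=\langle\A(\e s,\Y_m^\e)-\A(\e s,\X_m^\e),\Z_m^\e\rangle+\langle\A(\e s,\X_m^\e),\Z_m^\e\rangle$ and using the strong local monotonicity Hypothesis~\ref{hyp2}~(H.2)$^*$, namely \eqref{1.9}, the $\d\phi_m$--term cancels the $\big(f(\e s)+\rho(\Y_m^\e)+\eta(\X_m^\e)\big)\|\Z_m^\e\|_\H^2$ contribution, and the surplus dissipation $-\e\xi\|\Z_m^\e\|_\V^\beta$ is used, via Young's inequality in the $\V$--$\V^*$ duality, to absorb the genuinely new drift term $2\e\langle\A(\e s,\X_m^\e),\Z_m^\e\rangle$ up to $C\e\|\A(\e s,\X_m^\e)\|_{\V^*}^{\beta/(\beta-1)}$. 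On $[0,\tau_R^\e]$ the weight $\phi_m$ lies between $e^{-\widehat{C}_R}$ and $e^{\widehat{C}_R}$ (by \eqref{1.3} and $f\in\L^1(0,T;\R^+)$), and Hypothesis~\ref{hyp1}~(H.4) together with Lemma~\ref{lemma2} gives $\|\A(\e s,\X_m^\e(s))\|_{\V^*}^{\beta/(\beta-1)}\leq C_R\big(1+f(\e s)\big)$ there; hence
\begin{align*}
	\phi_m(\t)\|\Z_m^\e(\t)\|_\H^2\leq K_R(\e)+2\sqrt{\e}\int_0^{\t}\phi_m(s)\big((\B(\e s,\Y_m^\e(s))-\B(\e s,\X_m^\e(s)))\d\W(s),\Z_m^\e(s)\big),
\end{align*}
where $K_R(\e):=Ce^{\widehat{C}_R}C_R\big(\e T+\int_0^{\e T}f(u)\d u\big)$. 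The decisive observation is that $K_R(\e)\to0$ as $\e\to0$, since $\e T\to0$ and $\int_0^{\e T}f(u)\d u\to0$ by absolute continuity of the Lebesgue integral.

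From here the computation copies Lemma~\ref{lemma4}: for $q>1$ I would take $\sup_{t\in[0,r]}$ and $q$-th moments, handle the martingale term by \eqref{2.9}, bound $\|\B(\e s,\Y_m^\e)-\B(\e s,\X_m^\e)\|_{\L_2}^2\leq L_\B(\e s)\|\Z_m^\e(s)\|_\H^2$ via \eqref{1.11}, apply H\"older's, Minkowski's and Young's inequalities exactly as in \eqref{2.27}, and use Gronwall's inequality; undoing the weight through $\phi_m\geq e^{-\widehat{C}_R}$ on $[0,\tau_R^\e]$ gives
\begin{align*}
	\bigg\{\E\bigg[\sup_{t\in[0,\T]}\|\Z_m^\e(t)\|_\H^{2q}\bigg]\bigg\}^{\frac{2}{q}}\leq 4e^{2\widehat{C}_R}K_R(\e)^2\exp\bigg\{Cq\e T^2\Big(\esssup_{s\in[0,T]}L_\B(s)\Big)^2\bigg\}.
\end{align*}
Markov's inequality with $q=2/\e$ then yields, for fixed $R$,
\begin{align*}
	\e\log\P\bigg\{\sup_{t\in[0,\T]}\|\Z_m^\e(t)\|_\H^2>\varpi\bigg\}\leq-2\log\varpi+\log\big(4e^{2\widehat{C}_R}\big)+2\log K_R(\e)+2CT^2\Big(\esssup_{s\in[0,T]}L_\B(s)\Big)^2,
\end{align*}
whose right-hand side tends to $-\infty$ as $\e\to0$ because $K_R(\e)\to0$. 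Finally, for arbitrary $M>0$ I would fix $R=R_0(M)$ as above, use $\{\sup_{t\in[0,T]}\|\Z_m^\e(t)\|_\H^2>\varpi\}\subseteq\{\sup_{t\in[0,\T]}\|\Z_m^\e(t)\|_\H^2>\varpi\}\cup\{\tau_R^\e<T\}$ together with $\e\log(a+b)\leq\e\log2+\max(\e\log a,\e\log b)$, and let $\e\to0$ to get $\limsup_{\e\to0}\e\log\P\{\sup_{t\in[0,T]}\|\Z_m^\e(t)\|_\H^2>\varpi\}\leq-M$; since $M$ is arbitrary, \eqref{2.39} follows.

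The step I expect to demand the most care is the It\^o bookkeeping of the second paragraph: one has to exploit the strengthened monotonicity (H.2)$^*$ to manufacture exactly enough $\V$-dissipation to swallow the new drift term $2\e\langle\A(\e s,\X_m^\e),\Z_m^\e\rangle$, control that drift through the growth hypothesis (H.4) and the pathwise $\V$-bound on $\X_m^\e$ supplied by Lemma~\ref{lemma2}, and --- most importantly --- verify that the resulting deterministic bound $K_R(\e)$ truly vanishes as $\e\to0$. This last fact is what produces the $-\infty$ limit here, replacing the role played by $\|\x_m-\x\|_\H\to0$ in Lemma~\ref{lemma4}.
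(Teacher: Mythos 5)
Your proposal is correct and follows essentially the same route as the paper: same stopping times built from Lemmas \ref{lemma2} and \ref{lemma3}/\eqref{2.17}, same decomposition of the drift into $\langle\A(\e s,\Y_m^\e)-\A(\e s,\X_m^\e),\Z_m^\e\rangle+\langle\A(\e s,\X_m^\e),\Z_m^\e\rangle$, same use of the surplus $\xi$-dissipation in (H.2)$^*$ plus Young's inequality to absorb the second piece via (H.4) and the pathwise $\V$-bound on $\X_m^\e$, same BDG/Markov machinery with $q=2/\e$, and the same key observation that $\e T+\int_0^{\e T}f\to0$ drives the bound to $-\infty$. The only cosmetic difference is that you carry the exponential weight $\phi_m$ inside It\^o's formula from the outset, whereas the paper applies It\^o to $\|\Z_m^\e\|_\H^2$ first and then pulls out the factor $\exp\{\e\int_0^{\cdot}(f+\rho+\eta)\d s\}$ via Gronwall's lemma in \eqref{2.43}; these are interchangeable bookkeeping choices.
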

\begin{proof}
	For any $R>0$, we define the following sequence of stopping times:
	\begin{align*}
		\tau_{R,\e}^{1,m}&:=\inf\bigg\{t\geq0:\e\int_0^t\|\Y_m^\e(s)\|_\H^{p-2}\|\Y_m^\e(s)\|_\V^\beta\d s>R, \ \text{ or }\  \|\Y_m^\e(t)\|_\H^2>R\bigg\},\\
			\tau_{R,\e}^{2,m}&:=\inf\big\{t\geq0: \|\X_m^\e(t)\|_\V^2>R\big\}.
	\end{align*}
Setting $\tau_{R,\e}^m=\tau_{R,\e}^{1,m}\wedge	\tau_{R,\e}^{2,m}$, and applying It\^o's formula to the process $\|\Y_m^\e(\cdot) -\X_m^\e(\cdot)\|_\H^2$, we find
\begin{align}\label{2.40}\nonumber
&\|\Y_m^\e(\tt) -\X_m^\e(\tt)\|_\H^2 \\&\nonumber= \e\int_0^{\tt}2\langle \A(\e s,\X_m^\e(s)),\Y_m^\e(s) -\X_m^\e(s)\rangle\d s\\&\nonumber \quad +\e\int_0^{\tt}\big[2\langle \A(\e s, \Y_m^\e(s))-\A(\e s,\X_m^\e(s)),\Y_m^\e(s) -\X_m^\e(s)\rangle \d s \\&\nonumber\qquad+\|\B(\e s,\Y_m^\e(s))-\B(\e s,\X_m^\e(s))\|_{\L_2}^2\big] \d s\\&\quad +2\sqrt{\e} \int_0^{\tt}\big((\B(\e s,\Y_m^\e(s))-\B(\e s,\X_m^\e(s)))\d\W(s),\Y_m^\e(s) -\X_m^\e(s)\big). 
\end{align}Let us consider the  term $2\langle \A(\e \cdot,\X_m^\e),\Y_m^\e-\X_m^\e\rangle$, and estimate it using the Cauchy-Shwarz inequality, Hypothesis \ref{hyp1} (H.4), Young's inequality and the fact that $\V\hookrightarrow\H$ as 
\begin{align}\label{2.41}\nonumber
&\big|2\langle \A(\e\cdot,\X_m^\e),\Y_m^\e -\X_m^\e\rangle\big| \\&\nonumber\leq 2\|\A(\e\cdot,\X_m^\e)\|_{\V^*}\|\Y_m^\e -\X_m^\e\|_\V \\&\nonumber\leq 
C\|\A(\e\cdot,\X_m^\e)\|_{\V^*}^{\frac{\beta}{\beta-1}}+\theta \|\Y_m^\e -\X_m^\e\|_\V^\beta  \\&\nonumber\leq C\big(f(\e\cdot )+C\|\X_m^\e\|_\V^\beta\big)\big(1+\|\X_m^\e\|_\H^\alpha\big)+\theta \|\Y_m^\e -\X_m^\e\|_\V^\beta  \\&\leq C f(\e\cdot) \big(1+\|\X_m^\e\|_\V^\alpha\big)+C \|\X_m^\e\|_\V^\beta+C \|\X_m^\e\|_\V^{\beta+\alpha}+\theta \|\Y_m^\e -\X_m^\e\|_\V^\beta,
\end{align}for $\theta\in(0,\xi)$. Substituting \eqref{2.41} in \eqref{2.40}, and then using Hypothesis \ref{hyp2} (H.2)$^*$, we find 
\begin{align}\label{2.42}\nonumber
	&\|\Y_m^\e(\tt) -\X_m^\e(\tt)\|_\H^2 \\&\nonumber
	\leq \e\int_0^{\tt}\big[C f(\e s) \big(1+\|\X_m^\e(s)\|_\H^\alpha\big)+C \|\X_m^\e(s)\|_\V^\beta+C \|\X_m^\e(s)\|_\V^{\beta+\alpha} \big]\d s\\&\nonumber\quad +\e\int_0^{\tt}\bigg(\big(f(\e s)+\rho(\Y_m^\e(s))+\eta(\X_m^\e(s))\big)\|\Y_m^\e(s) -\X_m^\e(s)\|_\H^2\\&\nonumber\qquad- (\xi-\theta)\|\Y_m^\e(s) -\X_m^\e(s)\|_\V^\beta\bigg)\d s\\&\quad+2\sqrt{\e} \bigg| \int_0^{\tt}\big((\B(\e s,\Y_m^\e(s))-\B(\e s,\X_m^\e(s)))\d\W(s),\Y_m^\e(s) -\X_m^\e(s)\big) \bigg|. 
	\end{align} Applying Gronwall's inequality in the  inequality \eqref{2.42}, we deduce
\begin{align}\label{2.43}\nonumber
		&\|\Y_m^\e(\tt) -\X_m^\e(\tt)\|_\H^2 \\&\nonumber \leq \bigg\{C\e  \bigg[\int_0^{\tt}\big[ f(\e s) \big(1+\|\X_m^\e(s)\|_\V^\alpha\big)+ \|\X_m^\e(s)\|_\V^\beta+ \|\X_m^\e(s)\|_\V^{\beta+\alpha} \big]\d s\bigg]\\&\nonumber\qquad +2\sqrt{\e} \bigg| \int_0^{\tt}\big((\B(\e s,\Y_m^\e(s))-\B(\e s,\X_m^\e(s)))\d\W(s),\Y_m^\e(s) -\X_m^\e(s)\big) \bigg|\bigg\} \\&\quad \times \exp\bigg\{\e\int_0^{\tt}\big(f(\e s)+\rho(\Y_m^\e(s))+\eta(\X_m^\e(s))\big)\d s\bigg\}.
\end{align}
Then, for any $q>1$, we conclude that  
\begin{align}\label{2.44}\nonumber
	&	\bigg\{\E\bigg[	\sup_{s\in[0,\t]}\|	\Y_m^\e(t)-\X_m^\e(t)\|_\H^{2q}\bigg]\bigg\}^\frac{2}{q} \\&\nonumber\leq \left\{ C \e^2\bigg[\big(R^\frac{\alpha}{2}+1\big)\int_0^{T}f(\e s)\d s+R^\frac{\beta}{2}T+R^\frac{\alpha+\beta}{2}T\bigg]^2 \right.\\&\left.\nonumber\qquad+16Cq\e \bigg(\int_0^TL_\B(s)\d s\bigg)\bigg(\int_0^TL_\B(\e s)\bigg\{\E\bigg[\sup_{r\in[0,s\wedge \tau_{R,\e}^m]}\|	\Y_m^\e(r)-\X_m^\e(r)\|_\H^{2q}\bigg]\bigg\}^\frac{2}{q}\d s\bigg)\right\}\\&\qquad\times e^{\big[\int_0^{T}f(s)\d s+C(1+R^\frac{\zeta}{2})(\e T+R)+\e C_{R,T}^1\big]},
\end{align}where we have used a calculation similar to \eqref{2.27} and the fact that $\V\hookrightarrow\H$, and the constant $C_{R,T}^1=CT(1+R^\frac{\beta}{2}+R^\frac{\zeta}{2}+R^\frac{\zeta+\beta}{2})$.
Using Gronwall's inequality in \eqref{2.44}, we arrive at 
\begin{align}\label{2.45}\nonumber
	&	\bigg\{\E\bigg[	\sup_{s\in[0,\t]}\|	\Y_m^\e(t)-\X_m^\e(t)\|_\H^{2q}\bigg]\bigg\}^\frac{2}{q} \\&\nonumber\leq e^{\big[\int_0^{T}f(s)\d s+C(1+R^\frac{\zeta}{2})(\e T+R)+\e C_{R,T}^1\big]}  C \bigg[\big(R^\frac{\alpha}{2}+1\big)\int_0^{\e T}f(s)\d s+\e R^\frac{\beta}{2}T+\e R^\frac{\alpha+\beta}{2}T\bigg]^2\\&\qquad\times\exp\bigg\{16Cq\e T^2 \bigg(\esssup_{s\in[0,T]}L_\B(s)\bigg)^2\bigg\}.
\end{align}Fixing $R>0$, and taking $q=\frac{2}{\e},$ and applying  Markov's  inequality, we deduce
\begin{align}\label{2.46}\nonumber
&	\e\log \P\bigg\{\sup_{t\in[0,\TT]}\|\Y_m^\e(t)-\X_m^\e(t)\|_\H^2>\varpi \bigg\}\\&\nonumber\leq  \e \log\bigg\{\varpi^{-q}\E\bigg[\sup_{t\in[0,\TT]}\|\Y_m^\e(t)-\X_m^\e(t)\|_\H^{2q}\bigg]\bigg\} \\&\nonumber\leq -2\log\varpi+ \log \bigg\{C \bigg[\big(R^\frac{\alpha}{2}+1\big)\int_0^{\e T}f(s)\d s+ \e R^\frac{\beta}{2}T+\e R^\frac{\alpha+\beta}{2}T\bigg]^2\bigg\}+\int_0^{T}f(s)\d s\\&\nonumber\quad +C(1+R^\frac{\zeta}{2})( \e T+R)+ \e C_{R,T}^1+32C T^2 \bigg(\esssup_{s\in[0,T]}L_\B(s)\bigg)^2\\& \to -\infty \ \text{ as } \ \e \to0,
	\end{align} 
where we have used  the fact that  $f\in\L^1(0,T;\R^+)$, $L_\B\in\L^\infty(0,T;\R^+)$ and the absolute continuity of the Lebesgue integral.

Using \eqref{2.17} and Lemma \ref{lemma2}, for any $M>0$, there exists a  constant $R>0$ such that the following inequalities hold:
\begin{align}\label{2.47}
	\sup_{\e\in(0,1]} \e\log \P \bigg\{	\bigg(|\Y_m^\e|_\H^\V(T)\bigg)^p>R\bigg\} &\leq -M, \\ \label{2.48}
		\sup_{\e\in(0,1]}\e\log \P \bigg\{\sup_{t\in[0,T]}\|\X_m^\e(t)\|_\V^2>R\bigg\} &\leq -M.
\end{align}For such a constant $R>0$, using \eqref{2.46} and the definition of stopping time $\tau_{R,\e}^m$, there exists $\e_0>0$ such that for every $\e$ satisfying $\e\in(0,\e_0]$, 
\begin{align}\label{2.49}\nonumber
	&\P \bigg\{\sup_{t\in[0,T]}\|\Y_m^\e(t)-\X_m^\e(t)\|_\H^2>\varpi, \bigg(|\Y_m^\e|_\H^\V(T)\bigg)^p\leq R,\sup_{t\in[0,T]}\|\X_m^\e(t)\|_\V^2\leq R \bigg\}\\&\leq \P \bigg\{\sup_{t\in[0,\T]}\|\Y_m^\e(t)-\X_m^\e(t)\|_\H^2>\varpi\bigg\}\leq e^{-\frac{M}{\e}}.
\end{align}Now, combining \eqref{2.47}-\eqref{2.49}, we conclude that there exists $\e_0$, such that for every $\e$ satisfying $\e\in(0,\e_0]$, 
\begin{align}\label{2.50}
		&\P \bigg\{\sup_{t\in[0,T]}\|\Y_m^\e(t)-\X_m^\e(t)\|_\H^2>\varpi\bigg\} \leq e^{-\frac{3M}{\e}}.
\end{align}Since $M$ is arbitrary, we obtain the required result \eqref{2.39}.
\end{proof}To finish the proof of our main result (Theorem \ref{thrm2}), we need to verify \eqref{2.4}.
\begin{proof}[Verification of \eqref{2.4}]
By Lemmas \ref{lemma4} and \ref{lemma5}, for any $M>0$, there exists a positive integer $M_0$ satisfying 
\begin{align}\label{2.51}
\P\bigg\{\sup_{t\in[0,T]}\|\Y_{M_0}^\e(t)-\Y^\e(t)\|_\H^2>\frac{\varpi}{3}\bigg\} \leq e^{-\frac{M}{\e}} , \ \text{ for any } \ \e\in(0,1],
\end{align}and 
\begin{align}\label{2.52}
	\P\bigg\{\sup_{t\in[0,T]}\|\X_{M_0}^\e(t)-\X^\e(t)\|_\H^2>\frac{\varpi}{3}\bigg\} \leq e^{-\frac{M}{\e}} , \ \text{ for any } \ \e\in(0,1]. 
\end{align}For such an $M_0$, using Lemma \ref{lemma6}, there exists an $\e_0>0$ such that for every $\e$ satisfying $\e\in(0,\e_0]$, we have
\begin{align}\label{2.53}
	\P\bigg\{\sup_{t\in[0,T]}\|\Y_{M_0}^\e(t)-\X_{M_0}^\e(t)\|_\H^2>\frac{\varpi}{3}\bigg\} \leq e^{-\frac{M}{\e}}.
\end{align}Combining \eqref{2.51}-\eqref{2.53}, for any $\e\in(0,\e_0]$, we have 
\begin{align}\label{2.54}
	\P\bigg\{\sup_{t\in[0,T]}\|\Y^\e(t)-\X^\e(t)\|_\H^2>\varpi\bigg\} \leq e^{-\frac{3M}{\e}}.
\end{align}Since $M$ is arbitrary, we find
\begin{align}\label{2.55}
	\lim_{\e\to0}\e\log\P\bigg\{\sup_{t\in[0,T]}\|\Y^\e(t)-\X^\e(t)\|_\H^2>\varpi\bigg\} =-\infty,
\end{align}which is the required result \eqref{2.4}. Hence Theorem \ref{thrm2} holds using the exponential equivalence result of LDP (cf. Theorem 4.2.13, \cite{ADOZ}).
\end{proof}
\section{Applications}\label{Sec3}\setcounter{equation}{0}
The results obtained in this paper is applicable to a large class of SPDEs with fully local monotone coefficients. One should note that all the models considered in the works (cf. \cite{HBEH,ICAM1,WL4,WLMR1,WLMR2,SLWLYX,EM2,CPMR,MRSSTZ}, etc.) can be covered by our framework, including 2D Navier-Stokes equations, fast-diffusion equations,  porous media equations, $p$-Laplacian equations, Allen-Cahn equations, Burgers equations, 2D Boussinesq system, 3D Leray-$\alpha$ model, 2D Boussinesq model for the Benard convection, 2D magneto-hydrodynamic equations, 2D magnetic B\'enard equations, some shell models of turbulence (Sabra, Goy, Dyadic), power law fluids, the Ladyzhenskaya model, 3D tamed Navier-Stokes equations and the Kuramoto-Sivashinsy equations. In this section, we discuss some important examples which can be covered by the formulation of this paper. The well-posedness  of the following SPDEs is discussed in different works but the formulation provided in \cite{MRSSTZ} cover all such cases. Thus, one can obtain the global solvability results from the work \cite{MRSSTZ} (see Section 4).

\subsection{Cahn-Hilliard equation} \label{ss3.1} The well-known Chan-Hilliard equations were framed in \cite{JWCJEH}, which describe a phase separation in a binary alloy. This model is one of the fundamental equations in material science. This model can be read as 
\begin{equation}\label{3.1}
\left\{	\begin{aligned}
		\partial_tu(t)&=-\Delta^2u(t)+\Delta\varphi(u(t)), \ \ \text{ in } \ \ (0,T)\times\mathcal{O},\\
		\nabla u(t)\cdot\nu &=\nabla (\Delta u(t))\cdot\nu =0,\ \ \text{ on } \ \  [0,T)\times\partial \1,\\
		u(0)&=u_0,\ \text{ in }\ \mathcal{O},
	\end{aligned}
\right.
\end{equation}where $u:[0,T]\times \1\to \R$ denotes the scaled concentration, $\1$ is a bounded domain in $\R^n$ for $n=1,2,3,$ with smooth boundary $\partial\1$ and $\nu$ is the outward drawn unit normal vector on the boundary $\partial \1$. 
Let us assume that the nonlinear term $\varphi$ satisfies the following conditions:
\begin{enumerate}
	\item $\varphi\in\C^1(\R,\R)$,
	\item there exists a positive constant $C$ and $p\in\left[2,\frac{n+4}{n}\right]$ such that for any $x,y\in\R$, 
	\begin{align*}
		\varphi'(x)\geq -C\  \text{  and  }\  |\varphi(x)| \leq C(1+|x|^p),
	\end{align*}and 
	\begin{align*}
		|\varphi(x)-\varphi(y)|\leq C(1+|x|^{p-1}+|y|^{p-1})|x-y|.
	\end{align*}
\end{enumerate}
Let $\H=\mathbb{L}^2(\1)$ and $\V=\{u\in\H^2(\1):\nabla u\cdot\nu=\nabla(\Delta u)\cdot\nu=0\  \text{ on }\  \partial\1\}$. Then, we have the Gelfand triplet $\V\hookrightarrow\H\hookrightarrow\V^*$ and the embedding $\V\hookrightarrow \H$ is compact. Set \begin{align*}
	\A(u)=-\Delta^2u+\Delta\varphi(u).
\end{align*}Then from \cite{MRSSTZ} (see Example 4.4 or Example 5.2.27, \cite{WLMR2}),  we infer that Hypothesis \ref{hyp1} (H.1)-(H.4) hold. More precisely, the local monotonicity property (H.2) can be stated as  
\begin{align*}
&	\langle \A(u)-\A(v),u-v\rangle\\& \leq -\frac{1}{2}\|u-v\|_\V^2+C \big(1+\rho(u)+\eta(v)\big)\|u-v\|_\H^2,
\end{align*}where $\rho(\cdot)$ and $ \eta(\cdot)$ are such that 
\begin{align*}
	|\rho(u)|+|\eta(u)| \leq C\|u\|_\V^{\frac{n(p-1)}{2}}\|u\|_\H^{\frac{(4-n)(p-1)}{2}}.
\end{align*}  One should note that $\frac{n(p-1)}{2}\leq 2 \Leftrightarrow p\leq \frac{n+4}{n}$. For $d=1,2$, the function $\varphi(\cdot)$ can be chosen as $\varphi(x)=x^3-x$, which is the derivative of the double well potential $F(x)=\frac{1}{4}(x^2-1)^2$.

Under the above assumption on $\varphi(\cdot)$, the stochastic counterpart to the system \eqref{3.1} can be stated as follows:
\begin{equation}\label{3.2}
	\left\{\begin{aligned}
	\d \mathrm{Y}(t)&=\big[\A(\mathrm{Y}(t))+\Delta\varphi(\mathrm{Y}(t))\big]\d t+\B(t,\mathrm{Y}(t))\d \W(t), \ \ t\in (0,T),\\
	\mathrm{Y}(0)&=u_0\in\H,
	\end{aligned}
	\right.
\end{equation}where $\W(\cdot)$ is a $\U$-cylindrical Wiener process on the probability space $(\Omega,\mathscr{F},\{\mathscr{F}_t\}_{t\geq0},\P)$ and $\B(\cdot)$ is Lipschitz from $\H$ to $\L_2(\U,\H)$. The well-posedness of the system \eqref{3.2} is established in \cite{MRSSTZ} (see Theorem 2.6). 

Now, consider the small time process  $\mathrm{Y}(\e \cdot)$ and $\varrho_{u_0}^\e$ be the law of $\mathrm{Y}(\e \cdot)$ on $\C([0,T];\H)$. Then using Theorem \ref{thrm2}, we obtain the small time LDP for the problem  \eqref{3.2}.
\begin{theorem}\label{thrm3}
Assume that Hypotheses \ref{hyp1} and \ref{hyp2} hold, and the embedding $\V\hookrightarrow \H$ is compact. Then, \eqref{3.2} has a unique solution $\mathrm{Y}(\cdot)$ for the initial data $u_0\in\H$ and $\varrho_{u_0 }^{\e}$ satisfies the LDP with the rate function $\I(\cdot)$ given by \eqref{1.12}.  
\end{theorem}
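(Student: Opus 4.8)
The plan is to realize the stochastic Cahn-Hilliard system \eqref{3.2} as a particular case of the abstract evolution equation \eqref{1.1} and then to invoke Theorem \ref{thrm1} for well-posedness and Theorem \ref{thrm2} for the small time LDP. First I would fix the Gelfand triplet with $\H=\mathbb{L}^2(\1)$ and $\V=\{u\in\H^2(\1):\nabla u\cdot\nu=\nabla(\Delta u)\cdot\nu=0\ \text{on}\ \partial\1\}$. Being a closed subspace of the separable Hilbert space $\H^2(\1)$, the space $\V$ is itself a separable Hilbert space, hence $2$-smooth, so that $\gamma(\U,\V)=\L_2(\U,\V)$ with $\|\cdot\|_\gamma=\|\cdot\|_{\L_2(\U,\V)}$ and the Burkholder-Davis-Gundy type inequality \eqref{112} is available; moreover, on the bounded domain $\1\subset\R^n$ with $n\le 3$, the embedding $\V\hookrightarrow\H$ is compact by the Rellich-Kondrachov theorem, which is the compactness hypothesis needed in Theorem \ref{thrm1}. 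Setting $\A(u)=-\Delta^2 u+\Delta\varphi(u)$, the system \eqref{3.2} becomes exactly \eqref{1.1} with $\beta=2$.

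Next I would verify Hypothesis \ref{hyp1}. Parts (H.1)-(H.4) are precisely the content of Example 4.4 of \cite{MRSSTZ} (see also Example 5.2.27 of \cite{WLMR2}): the operator $\A$ is hemicontinuous, coercive in $\V$ with exponent $\beta=2$, of polynomial growth as in \eqref{1.5}, and it satisfies the local monotonicity estimate
\begin{align*}
\langle\A(u)-\A(v),u-v\rangle\leq-\frac{1}{2}\|u-v\|_\V^2+C\big(1+\rho(u)+\eta(v)\big)\|u-v\|_\H^2,
\end{align*}
with $|\rho(u)|+|\eta(u)|\leq C\|u\|_\V^{n(p-1)/2}\|u\|_\H^{(4-n)(p-1)/2}$. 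Here the one step that needs genuine care is the exponent bookkeeping: the standing assumption $p\in[2,\frac{n+4}{n}]$ is exactly equivalent to $\frac{n(p-1)}{2}\leq 2=\beta$, which is what forces \eqref{1.3} to hold, with $\zeta=\frac{(4-n)(p-1)}{2}$. Part (H.5), together with the linear-growth bound \eqref{1.7}, follows at once from the assumption that $\B$ is Lipschitz and of linear growth from $\H$ into $\L_2(\U,\H)$.

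Then I would check the strengthened Hypothesis \ref{hyp2}. For (H.2)$^*$, doubling the monotonicity estimate above and adding the Lipschitz bound $\|\B(u)-\B(v)\|_{\L_2}^2\leq L_\B\|u-v\|_\H^2$ gives, after relabeling $\rho$ and $\eta$ by fixed constant multiples,
\begin{align*}
& 2\langle\A(u)-\A(v),u-v\rangle+\|\B(u)-\B(v)\|_{\L_2}^2+\xi\|u-v\|_\V^2\\
&\qquad\leq\big(f+\rho(u)+\eta(v)\big)\|u-v\|_\H^2
\end{align*}
for every $\xi\in(0,1)$, with $f$ a constant absorbing $2C+L_\B$, in particular $f\in\L^1(0,T;\R^+)$. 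For (H.6), the bounds \eqref{1.09} and \eqref{1.11} are just the linear growth and the Lipschitz continuity of $\B$ on $\H$, with constant (hence $\L^\infty(0,T;\R^+)$) coefficients $g_1$ and $L_\B$; the remaining bound \eqref{1.10} encodes the additional $\V$-regularity built into Hypothesis \ref{hyp2}, namely that $\B$ also maps $\V$ into $\gamma(\U,\V)=\L_2(\U,\V)$ with at most linear growth in $\|\cdot\|_\V$.

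Finally I would conclude. Since Hypothesis \ref{hyp1} holds and $\V\hookrightarrow\H$ is compact, Theorem \ref{thrm1} yields, for every $u_0\in\H$, a unique probabilistically strong solution $\mathrm{Y}(\cdot)\in\C([0,T];\H)\cap\L^2(0,T;\V)$ of \eqref{3.2} satisfying the energy estimate \eqref{1.8}; and since both Hypotheses \ref{hyp1} and \ref{hyp2} are satisfied and $\V$ is $2$-smooth, Theorem \ref{thrm2} applies verbatim to the small time process $\mathrm{Y}(\e\cdot)$, so that its law $\varrho_{u_0}^\e$ on $\C([0,T];\H)$ satisfies the LDP with rate function $\I(\cdot)$ given by \eqref{1.12}. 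The main obstacle in the whole argument is really just the single dimension-dependent compatibility condition $n(p-1)\leq 4$ between the degree $p$ of the nonlinearity $\varphi$ and the structural exponent $\beta=2$; once this is in place, the proof is a direct application of \cite{MRSSTZ} and of the machinery developed in Section \ref{Sec2}.
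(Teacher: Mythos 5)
Your proposal is correct and mirrors the paper's own (brief) treatment: both reduce Theorem~\ref{thrm3} to the abstract results by fixing the Gelfand triplet $\V\hookrightarrow\H\hookrightarrow\V^*$, citing Example 4.4 of \cite{MRSSTZ} (equivalently Example 5.2.27 of \cite{WLMR2}) for (H.1)--(H.4) with $\beta=2$ and the exponent constraint $n(p-1)/2\leq 2\Leftrightarrow p\leq\frac{n+4}{n}$, and then invoking Theorems \ref{thrm1} and \ref{thrm2}. You flesh out a few steps the paper leaves implicit --- that $\V$, being a separable Hilbert space, is automatically $2$-smooth so that $\gamma(\U,\V)=\L_2(\U,\V)$ and \eqref{112} applies; and that doubling the stated local monotonicity estimate and adding the Lipschitz bound on $\B$ yields (H.2)$^*$ with a genuine $\xi\|u-v\|_\V^2$ term (in fact one can take $\xi=1$, not merely $\xi\in(0,1)$) --- but the route is the same.
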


\subsection{Stochastic 2D Navier-Stokes equations} \label{ss3.2}
The classical Navier-Stokes equations is a very important model in fluid mechanics which describe the time evolution of fluids. Here we discuss only for two dimensional case and it can be expresses as follows: Let $\1\subset\R^2$ be any bounded domain with smooth boundary $\partial\1$. Consider 
\begin{equation}\label{3.3}
	\left\{	\begin{aligned}
		\partial_t\u(t)&=\nu\Delta\u(t)-(\u(t)\cdot\nabla)\u(t) -\nabla p +\f(t),\ \text{ in }\ (0,T)\times\mathcal{O},\\
	\nabla\cdot\u(t)&=0,\ \text{ in } \ (0,T)\times\1,\\	
	\u&=0, \ \text{ on }\ [0,T)\times\partial \1,  \\	\u(0)&=\u_0,\ \text{ in }\ \mathcal{O},
	\end{aligned}
	\right.
\end{equation}
where $\u(t)=\big(u_1(t,x),u_2(t,x)\big)$ denotes the velocity of the fluid, $p$ is the pressure, $\nu$ is the kinematic viscosity and $f$ is the external forcing of the fluid, and $\u\cdot\nabla=\sum\limits_{i=1}^{2}u_i\partial_i$.

Define \begin{align*}
	\V :=\bigg\{\v\in \mathbb{W}_0^{1,2}(\1;\R^2): \nabla\cdot \v=0, \ \|\v\|_\V^2:=\int_\1|\nabla\v(x)|^2\d x\bigg\}, 
\end{align*} and $\H$ is the closure of the above space $\V$ in the norm $\|\v\|_\H^2 :=\displaystyle \int_\1 |\v(x)|^2\d x$.

Let us define the Stokes operator $\A:\mathbb{W}^{2,2}(\1;\R^2)\cap\V\to\H$ as 
\begin{align*}
	\A(\u)=-\mathrm{P}_\H\Delta\u, \ \text{ for all } \ \u \in \mathbb{W}^{2,2}(\1;\R^2)\cap\V, 
\end{align*}
where $\mathrm{P}_\H$ is the Helmholtz-Leray projection from $\mathbb{L}^2(\1;\R^2)$ to $\H$, and the nonlinear operator 
\begin{align*}
	\mathcal{B}(\u,\v)=\mathrm{P}_\H\big[(\u\cdot\nabla)\v\big], \	\mathcal{B}(\u):=	\mathcal{B}(\u,\u).
\end{align*}
Then, we can write the stochastic counterpart to the system \eqref{3.3}  as
\begin{equation}\label{3.4}
	\left\{	\begin{aligned}
		\d \Y(t)&=-\big[\nu\A(\Y(t)) +\mathcal{B}(\Y(t))-\mathrm{P}_{\H}\f(t)\big]\d t +\B(t,\Y(t))\d\W(t),\ t\in(0,T),\\
				\Y(0)&=\x\in\H,
	\end{aligned}
	\right.
\end{equation} where $\W(\cdot)$ is a $\U$-cylindrical Wiener process on the probability space $(\Omega,\mathscr{F},\{\mathscr{F}_t\}_{t\geq0},\P)$. From \cite{WLMR1} (see Example 3.3), we know that the coefficients of stochastic 2D Navier-Stokes equations satisfy Hypothesis \ref{hyp1} (H.1), (H.4) and \ref{hyp2} (H.2)$^*$.   The well-posedness of the system \eqref{3.4} has been obtained in Theorem 2.6, \cite{SSSPS}. Let $\varrho^\e$ be the law of $\Y(\e\cdot)$ on $\C([0,T];\H)$. Then using Theorem \ref{thrm2}, we obtain the small time LDP for the stochastic 2D Navier-Stokes equations \eqref{3.4}.
 
 \begin{theorem}\label{thrm4}
 	Assume that Hypotheses \ref{hyp1} and \ref{hyp2} hold. Then, \eqref{3.4} has a unique solution $\Y(\cdot)$ for the initial data $\u_0\in\H$ and $\varrho_{\u_0}^\e$ satisfies the LDP with the rate function $\I(\cdot)$ given in \eqref{1.12}. 
 \end{theorem}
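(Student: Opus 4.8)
The plan is to obtain Theorem \ref{thrm4} as a direct consequence of the abstract result Theorem \ref{thrm2}: it suffices to check that the stochastic 2D Navier--Stokes system \eqref{3.4} fits the Gelfand-triple framework of \eqref{1.1} together with Hypotheses \ref{hyp1} and \ref{hyp2}. First I would record that, with $\H$ and $\V$ as defined above, $\V$ is a separable Hilbert space, hence trivially $2$-smooth; consequently $\gamma(\U,\V)$ coincides with $\L_2(\U,\V)$, the norm $\|\cdot\|_\gamma$ equals $\|\cdot\|_{\L_2(\U,\V)}$, and the Burkholder--Davis--Gundy inequality \eqref{112} with sharp constant (Theorem 1.1, \cite{JS}) is at our disposal. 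The drift is $\A(t,\u)=-\nu\A(\u)-\mathcal{B}(\u)+\mathrm{P}_\H\f(t)$ (with $\A$ the Stokes operator and $\mathcal{B}(\u)=\mathcal{B}(\u,\u)$ as above). Hemicontinuity (H.1) is immediate since $\lambda\mapsto\langle\A(t,\u_1+\lambda\u_2),\u\rangle$ is a quadratic polynomial in $\lambda$; coercivity (H.3) and the growth bound (H.4) with $\beta=2$ follow from the boundedness of the Stokes form, the cancellation $\langle\mathcal{B}(\u),\u\rangle=0$, $\f\in\L^2$, and the 2D Ladyzhenskaya inequality $\|\u\|_{\mathbb{L}^4}^2\le C\|\u\|_\H\|\u\|_\V$, exactly as in Example 3.3 of \cite{WLMR1}.

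The main step is the enhanced local monotonicity (H.2)$^*$. Writing $\w=\u_1-\u_2$ and using $\langle\mathcal{B}(\u_1,\w),\w\rangle=0$, one has $\langle\mathcal{B}(\u_1)-\mathcal{B}(\u_2),\w\rangle=\langle\mathcal{B}(\w,\u_2),\w\rangle$, and then, by Ladyzhenskaya's inequality and Young's inequality,
\begin{align*}
|\langle\mathcal{B}(\u_1)-\mathcal{B}(\u_2),\w\rangle|\le\frac{\nu}{2}\|\w\|_\V^2+C\|\u_2\|_\H^2\|\u_2\|_\V^2\|\w\|_\H^2.
\end{align*}
Since the Stokes operator contributes $2\langle-\nu\A(\u_1)+\nu\A(\u_2),\w\rangle=-2\nu\|\w\|_\V^2$, and since the noise is assumed $\H$-Lipschitz so that $\|\B(t,\u_1)-\B(t,\u_2)\|_{\L_2}^2\le L_\B(t)\|\w\|_\H^2$ (cf. \eqref{1.11}), we arrive at
\begin{align*}
2\langle\A(t,\u_1)-\A(t,\u_2),\w\rangle+\|\B(t,\u_1)-\B(t,\u_2)\|_{\L_2}^2+\nu\|\w\|_\V^2\le\big(f(t)+\eta(\u_2)\big)\|\w\|_\H^2,
\end{align*}
with $\rho\equiv0$ and $\eta(\u)=C\|\u\|_\H^2\|\u\|_\V^2$, which obeys \eqref{1.3} with $\beta=2$, $\zeta=2$; this is (H.2)$^*$ with $\xi=\nu$ (and in particular (H.2)). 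The remaining parts of Hypothesis \ref{hyp2} --- the strong continuity (H.5)/(H.1) of $\B$ on $\H$ and the growth bounds \eqref{1.09}--\eqref{1.11}, including $\|\B(t,\u)\|_\gamma^2=\|\B(t,\u)\|_{\L_2(\U,\V)}^2\le h(t)(1+\|\u\|_\V^2)$ --- are imposed directly on the noise coefficient in the statement.

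With Hypotheses \ref{hyp1} and \ref{hyp2} in force, Theorem \ref{thrm1} (equivalently Theorem 2.6 of \cite{SSSPS}) furnishes the unique probabilistically strong solution $\Y(\cdot)\in\C([0,T];\H)\cap\L^2(0,T;\V)$, $\P$-a.s., together with the energy estimate \eqref{1.8}, and Theorem \ref{thrm2} applied to \eqref{3.4} then yields that $\varrho_{\u_0}^\e$, the law of $\Y(\e\cdot)$ on $\C([0,T];\H)$, satisfies the LDP with the rate function $\I(\cdot)$ of \eqref{1.12}. I expect the only nontrivial point to be the monotonicity estimate above: one must verify that the Stokes dissipation strictly dominates the convective term so that a \emph{positive} multiple of $\|\u_1-\u_2\|_\V^2$ survives on the left-hand side --- this is exactly what upgrades the plain local monotonicity (H.2) to (H.2)$^*$; the rest transfers verbatim from \cite{WLMR1,SSSPS}.
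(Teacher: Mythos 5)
Your proposal is correct and follows essentially the same route as the paper: verify that $\V$ is $2$-smooth (trivially, being a Hilbert space), check Hypotheses \ref{hyp1} and \ref{hyp2} for the Stokes/convection drift and the assumed noise coefficient, and then invoke Theorem \ref{thrm2}. The paper handles the hypothesis check purely by citation (Example 3.3 of \cite{WLMR1} and Theorem 2.6 of \cite{SSSPS}), whereas you spell out the key local monotonicity estimate via skew-symmetry of the trilinear form, Ladyzhenskaya's inequality and Young's inequality to exhibit $\xi=\nu$ in (H.2)$^*$ explicitly; your computation is correct and fills in precisely the detail the paper leaves to the reference.
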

\begin{remark}
The small time LDP for the stochastic Navier-Stokes equations have been obtained in \cite{TGXTSZ}. We provide it here since our formulation also covers the system \eqref{3.4}.
\end{remark}

\subsection{Quasilinear SPDEs}\label{ss3.3} Let $\1$ be a bounded domain in $\R^n$ with the smooth boundary $\partial\1$. Let us consider the following quasilinear partial differential equations:
\begin{align}\label{3.5}
	\partial_t u(t,x)=\nabla\cdot \a(t,x,u(t,x),\nabla u(t,x))-\a_0(t,x,u(t,x),\nabla u(t,x)), \ \text{ in } \ (0,T)\times \1,
\end{align}
with the zero Dirichlet boundary conditions (the case of other boundary conditions can be handled in a similar way), where $u:[0,T]\times \1\to\R$, the vector $\nabla u(t,x)=\big(\partial_ju(t,x)\big)_{j=1}^n$ is the gradient of $u(\cdot)$ with respect to the spatial variable $x$ and $\a=(\a_1,\a_2,\ldots,\a_n)$ is a vector with $\a_j:[0,T]\times\1\times\R\times\R^n\to\R^n$ for each $j=0,1,\ldots,n$.

\begin{hypothesis}\label{hyp3}
	Let us assume that $\a_j,\ j=0,1,\ldots,n$ satisfies the following conditions: There exists a constant $\beta >1$ if $n=1,2$ and $\beta\geq \frac{2n}{n+2}$ if $n\geq 3$, such that  the following assumptions hold:
\begin{itemize}
	\item[(A.1)] \textsf{$\a_j$ satisfies the Carath\'eodory conditions:}  for a.e. fixed $(t,x)\in[0,T]\times\1$, $\a_j(t,x,u,z)$ is continuous in $(u,z)\in\R\times\R^n$, for each fixed $(u,z)\in\R\times\R^n$, $\a_j(t,x,u,z)$ is measurable with respect to $(t,x)\in[0,T]\times\1$.
	\item[(A.2)] There exist constants $c_1,c_2\geq0$ and a function $f_1\in \L^\frac{\beta}{\beta-1}((0,T)\times \1;\R^+)$ such that for a.e. $(t,x)\in[0,T]\times\1$ and all $(u,z)\in\R\times\R^n$, $j=1,\ldots,n$,
	\begin{align}\label{3.6}
		|\a_j(t,x,u,z)|\leq c_1|z|^{\beta-1}+c_2|u|^\frac{(\beta-1)(n+2)}{n}+f_1(t,x).
	\end{align}
\item[(A.3)] There exist constants $c_3,c_4\geq0$ and a function $f_2\in\L^1((0,T)\times\1;\R^+)$ such that for a.e. $(t,x)\in[0,T]\times\1$ and all $(u,z)\in \R\times\R^n$,
\begin{align}\label{3.7}
	\sum_{j=1}^n \a_j(t,x,u,z)z_j+\a_0(t,x,u,z)u \geq c_3|z|^\beta-c_4|u|^2-f_2(t,x).
\end{align}
\item[(A.4)] For a.e. $(t,x)\in [0,T]\times\1$ and all $u\in\R$ and $z^1,z^2\in\R^n$, we have 
\begin{align}\label{3.8}
	\sum_{j=1}^n \big[\a_j(t,x,u,z^1)-\a_j(t,x,u,z^2)\big](z^1_j-z^2_j)>0.
\end{align}For a.e. $(t,x)\in[0,T]\times\1$, $z\in\R^n$ and for any $R>0$,
\begin{align}\label{3.9}
	\lim_{|z|\to\infty} \frac{\sup\limits_{|\u|\leq R}\sum\limits_{j=1}^{n}\a_j(t,x,u,z)z_j}{|z|+|z|^{\beta-1}}=\infty.
\end{align}
\end{itemize}
	\end{hypothesis}

Let $\H:=\mathbb{L}^2(\1)$ and $\V:=\mathbb{W}_0^{1,\beta}(\1)$. Using the Sobolev embedding we have the Gelfand triplet $\V\hookrightarrow\H\hookrightarrow\V^*$, with the compact embedding $\V\hookrightarrow\H$. 

For $u,v\in\V$, the operator $\A(\cdot)$ is defined as follows: 
\begin{align}\label{3.10}
	\langle \A(t,u),v\rangle &=-\int_\1\bigg\{\sum_{j=1}^{n}\a_j(t,x,u(x),\nabla u(x))\partial_ju(x)+\a_0(t,x,u(x),\nabla u(x))v(x)\bigg\}\d x.
\end{align} Using the Gagliardo-Nirenberg inequality for $1\leq p\leq \infty$, we have 
\begin{align}\label{3.11}
	\|u\|_{\mathbb{L}^p}\leq C\|\nabla u\|_{\mathbb{L}^\beta}^\delta\|u\|_{\mathbb{L}^2}^{1-\delta}, \text{ where } \delta\in[0,1] \text{ and } \frac{1}{p}=\left(\frac{1}{\beta}-\frac{1}{n}\right)\delta+\frac{1-\delta}{2}.
\end{align}
Then, form (A.2) and \eqref{3.11}, we obtain that   $\A(\cdot)$ is a measurable  mapping from $[0,T]\times\V$ to $\V^*$. Also, we have 
\begin{align}\label{3.12}
	\|\A(t,u)\|_{\V^*} \leq c_1\|u\|_\V^\beta+c\|u\|_\V^\beta\|u\|_\H^\frac{2\beta}{n}+F(t),
\end{align}where the function $ F(t)=\displaystyle\int_\1 f_1(t,x)^{\frac{\beta}{\beta-1}}\d x$ is in the space $\L^1(0,T;\R^+)$, which implies Hypothesis \ref{hyp1} (H.4).
 
 Using Hypothesis \ref{hyp3} (A.1) and (A.2), we obtain the hemicontinuity property (H.1). Combining (A.3) and \eqref{3.11}, we obtain the coercivity property (H.3). Again, by (A.1), (A.2) and (A.4), one can show that the operator $\A(\cdot)$ is pseudo-monotone for a.e. $t\in[0,T]$ (see Theorems 10.65 and 10.63, \cite{MRRCR} or Theorem 2.8, \cite{JLL}). By Corollary 2.7, \cite{MRSSTZ}, we obtain the existence of a \textsf{probabilistically weak solution} to the corresponding stochastic partial differential equations.
 
 A typical example of the formulation \eqref{3.5} is the $p$-Laplacian for $p\geq2$, 
 \begin{align}\label{3.13}
 	\partial_tu=\nabla\cdot\big(|\nabla u|^{p-2}\nabla u\big)-c|u|^{p-2}u,
 \end{align}where $c>0$.  Fix $\beta=p$, and one can see that Hypothesis \ref{hyp3} remains valid. 
 
 Since our aim is to establish LDP, therefore we need the existence of \textsf{probabilistically strong solutions}. For that we have to modify Hypothesis \ref{hyp3} (A.4), by the following condition: 
 \begin{hypothesis}\label{hyp4}
 \begin{itemize}
 	\item[(A.4)$^*$]  For $\beta\geq n$, $\gamma\in \left[0,\beta\big(1+\frac{2}{n}\big)-2\right]$ and $f_3\in\L^1(0,T;\R^+)$,  there exists a constant $c>0$ such that for a.e. $(t,x)\in[0,T]\times\1$ and all $u^1,u^2\in\R$ and $z^1,z^2\in\R^n$, 
\begin{align}\label{3.14}\nonumber
	&\sum_{j=1}^n \big[\a_j(t,x,u^1,z^1)-\a_j(t,x,u^2,z^2)\big](z^1_j-z^2_j)\\&+\big[\a_0(t,x,u^1,z^1)-\a_0(t,x,u^2,z^2)\big](u^1-u^2)>-c\big(f_3(t)+|u^1|^\gamma+|u^2|^\gamma \big)|u^1-u^2|^2.
\end{align}
\end{itemize}
\end{hypothesis}
Under the above assumption, we find   
\begin{align}\label{3.15}
	\langle \A(t,u)-\A(t,v),u-v\rangle \leq \big(f_3(t)+\rho(u)+\eta(v)\big)\|u-v\|_\H^2,
\end{align}where $\rho(\cdot)$ and $\eta(\cdot)$ are such that 
\begin{align*}
	|\rho(u)|+|\eta(u)|\leq C\|u\|_\V^{\gamma\delta}\|u\|_\H^{\gamma(1-\delta)},
\end{align*}
with $\delta=\frac{\beta n}{\beta n+2\beta-2n}$. Thus, the local monotonicity property (H.2) is verified.  Then, there exists a unique \textsf{probabilistically strong solution} (see Theorem 2.6, \cite{MRSSTZ}).
 
 Let $\varrho^\e$ be the law of small time process $\mathrm{Y}(\e\cdot)$ on $\C([0,T];\H)$. Then, using Theorem \ref{thrm2}, we establish the small time LDP for the stochastic counterpart to the system \eqref{3.5} with Hypothesis \ref{hyp2} (H.6) for the noise  coefficients $\B(\cdot,\cdot)$. Thus, we have  the following theorem:
 
  \begin{theorem}\label{thrm5}
 Let  Hypotheses \ref{hyp1} and  \ref{hyp2} be satisfied. Then there exists a unique solution $\mathrm{Y} (\cdot)$ of the stochastic counterpart of the system \eqref{3.5} for the initial data $u_0\in\H$ and $\varrho_{u_0}^\e$ satisfies the LDP with the rate function $\I(\cdot)$ given in \eqref{1.12}. 
 \end{theorem}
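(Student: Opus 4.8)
The plan is to show that the quasilinear operator $\A(\cdot,\cdot)$ of \eqref{3.10}, together with the noise coefficient $\B(\cdot,\cdot)$, fits the abstract framework of Theorem \ref{thrm2}, i.e. to verify Hypotheses \ref{hyp1} and \ref{hyp2}, and then to apply Theorem \ref{thrm2}. We keep the Gelfand triplet $\V=\mathbb{W}_0^{1,\beta}(\1)\hookrightarrow\H=\mathbb{L}^2(\1)\hookrightarrow\V^*$, with the additional standing requirement $\beta\geq2$ so that $\V$ is a separable $2$-smooth Banach space and the stochastic integral bound \eqref{112} over $\gamma(\U,\V)$ is available; the Rellich--Kondrachov theorem then supplies the dense and compact embedding $\V\hookrightarrow\H$. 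Much of Hypothesis \ref{hyp1} is already contained in the discussion preceding the theorem: hemicontinuity (H.1) follows from the Carath\'eodory condition (A.1) and the growth bound \eqref{3.6} by dominated convergence; the growth estimate (H.4) is \eqref{3.12}, deduced from \eqref{3.6} and the Gagliardo--Nirenberg inequality \eqref{3.11}; coercivity (H.3) follows from \eqref{3.7} and \eqref{3.11}; and the local-monotonicity bound (H.2), with $\rho,\eta$ as in \eqref{3.15}, follows from Hypothesis \ref{hyp4} (A.4)$^*$.

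What remains is to upgrade (H.2) to (H.2)$^*$ and to record (H.6). For (H.2)$^*$ the new feature is the coercive term $\xi\|\u_1-\u_2\|_\V^\beta$ on the left-hand side of \eqref{1.9}; I would obtain it by strengthening \eqref{3.14} to a quantitative strict monotonicity, requiring $\sum_{j=1}^n[\a_j(t,x,u^1,z^1)-\a_j(t,x,u^2,z^2)](z^1_j-z^2_j)$ to dominate $\xi_0|z^1-z^2|^\beta-c(f_3(t)+|u^1|^\gamma+|u^2|^\gamma)|u^1-u^2|^2$, then integrating over $\1$ and using the Poincar\'e inequality to replace $\|\nabla(\u_1-\u_2)\|_{\mathbb{L}^\beta}^\beta$ by a constant multiple of $\|\u_1-\u_2\|_\V^\beta$; combining this with \eqref{1.11} produces \eqref{1.9}. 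Such a quantitative monotonicity holds, for example, for the $p$-Laplacian \eqref{3.13} with $p=\beta\geq2$. The growth conditions (H.6), i.e. \eqref{1.09}--\eqref{1.11}, are imposed directly on $\B(\cdot,\cdot)$; relative to the $\L_2$-bound \eqref{1.7} the only genuinely new item is the $\gamma$-radonifying bound \eqref{1.10}, which we assume.

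Granting Hypotheses \ref{hyp1} and \ref{hyp2}, Theorem \ref{thrm1} furnishes the unique probabilistically strong solution $\mathrm{Y}(\cdot)\in\C([0,T];\H)\cap\L^\beta(0,T;\V)$ with the energy estimate \eqref{1.8}, and Theorem \ref{thrm2}, applied to the rescaled process $\mathrm{Y}^\e(\cdot)=\mathrm{Y}(\e\cdot)$, gives the LDP for its law $\varrho_{u_0}^\e$ on $\C([0,T];\H)$ with the rate function $\I(\cdot)$ of \eqref{1.12}. The one substantive point is the verification of (H.2)$^*$: the bare strict monotonicity of \eqref{3.8} and \eqref{3.14} does not suffice, one needs the explicit lower bound $\xi_0|z^1-z^2|^\beta$ so that after integration and Poincar\'e one controls $\|\u_1-\u_2\|_\V^\beta$; the remaining ingredients (the restriction $\beta\geq2$ guaranteeing $2$-smoothness of $\V$, and the $\gamma$-radonifying growth \eqref{1.10} on $\B$) are structural assumptions rather than claims to be derived.
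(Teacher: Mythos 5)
Your proposal takes essentially the same route as the paper: cast the quasilinear operator of \eqref{3.10} in the Gelfand triplet $\V=\mathbb{W}_0^{1,\beta}(\1)\hookrightarrow\H=\mathbb{L}^2(\1)\hookrightarrow\V^*$, verify Hypotheses \ref{hyp1} and \ref{hyp2}, and then cite Theorem \ref{thrm1} for well-posedness and Theorem \ref{thrm2} for the LDP. The paper, however, treats Theorem \ref{thrm5} purely conditionally --- ``Let Hypotheses \ref{hyp1} and \ref{hyp2} be satisfied'' --- and in the surrounding discussion only derives (H.1), (H.3), (H.4) from (A.1)--(A.3), and the \emph{weaker} local monotonicity (H.2) from (A.4)$^*$ via \eqref{3.15}; it does not trace (H.2)$^*$ or (H.6) back to structural conditions on $\a_j$ and $\B$. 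Your proposal correctly points out that (A.4)$^*$ by itself does not produce the coercive term $\xi\|\u_1-\u_2\|_\V^\beta$ in \eqref{1.9}, and your fix --- a quantitative pointwise strict-monotonicity lower bound $\xi_0|z^1-z^2|^\beta$, integrated over $\1$ and combined with the Poincar\'e inequality on $\mathbb{W}_0^{1,\beta}(\1)$ --- does yield (H.2)$^*$ for operators such as the $p$-Laplacian with $p=\beta$. You also surface the restriction $\beta\geq2$ required for $\mathbb{W}_0^{1,\beta}$ to be $2$-smooth (so that the maximal inequality \eqref{112} is available), which the paper only flags implicitly through the $p$-Laplacian example. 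In short, your argument is correct, follows the paper's strategy, and actually supplies the verification of (H.2)$^*$ that the paper leaves as an assumption.
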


\subsection{Convection-diffusion equation}\label{ss3.4} The convection-diffusion equation describes the physical phenomena where  particles, energy or any other physical quantities injected into a physical system due to two processes: \textsf{diffusion} and  \textsf{convection}. It has a variety of applications in fluid dynamics, mass and heat transfer, etc. We consider the following system on the $n$-dimensional torus $\mathbb{T}^n$:
\begin{equation}\label{3.16}
	\left\{	\begin{aligned}
		\partial_t u(t)&=\nabla\cdot\big(\a(u)\nabla u+\b(u)\big), \ \text{ on } \ (0,T)\times \mathbb{T}^n,\\
		u(0)&=u_0\in\H,
	\end{aligned}
	\right.
\end{equation}where $u:[0,T]\times \mathbb{T}^n\to\R$ is the flux function, $\b=(b_1,\ldots,b_n):\R\to\R^n$, and the diffusion matrix $\a=(a_{ij})_{i,j=1}^{n}:\R^n\to\mathbb{M}_{n\times n}$ here $\mathbb{M}_{n\times n}$ is the set of all real $n\times n$ matrices. The coefficients $\a(\cdot)$ and $\b(\cdot)$ satisfy the following conditions:
\begin{enumerate}
	\item Both $\a(\cdot)$ and $\b(\cdot)$ are continuous;
	\item $\b(\cdot)$ has linear growth;
	\item $\a(\cdot)$ is bounded and uniformly positive definite, that is, there exist positive constants $\delta$ and $C$ such that for any $u\in\R$ and $\z\in\R^n$, 
	\begin{align}\label{3.17}
		\delta|\z|^2\leq \langle \a(u)\z,\z\rangle\leq C|\z|^2.
	\end{align} 
\end{enumerate}

Let $\H:=\mathbb{L}^2(\mathbb{T}^n)$ and $\V:=\mathbb{W}^{1,2}(\mathbb{T}^n)$.  Then, we have the Gelfand triplet $\V\hookrightarrow\H\hookrightarrow\V^*$ with the compact embedding $\V\hookrightarrow\H$. For any $u,v\in\V$, define the operator $\A(\cdot)$ as 
\begin{align}\label{3.18}
	\langle\A(u),v\rangle =-\int_{\mathbb{T}^n}(\a(u(x))\nabla u(x)+\b(u(x)),\nabla v(x))\d x.
\end{align}
Under the above conditions, the coefficients in the system  \eqref{3.16} satisfies Hypothesis \ref{hyp3}, which implies Hypothesis  \ref{hyp1} holds except local monotonicity condition (H.2). Also, the operator $\A(\cdot)$ defined in \eqref{3.18} is a pseudo-monotone operator (cf. Example 4.2, \cite{MRSSTZ}). 

For the stochastic counterpart of \eqref{3.16}, we assume that the diffusion coefficient is globally Lipschitz in $\H$.  By Corollary 2.7, \cite{MRSSTZ}, we deduce the existence of \textsf{probabilistically weak solutions}, for the corresponding stochastic equation of the problem \eqref{3.16}. With an additional assumption, that is, the coefficients $\a(\cdot)$ and $\b(\cdot)$ are Lipschitz, we obtain the existence and uniquenss of \textsf{probabilistically strong solution} for the corresponding stochastic equation to the problem \eqref{3.16} (cf. Example 4.2, \cite{MRSSTZ} and Theorem 3.1, \cite{MHTZ}).

Let $\varrho^\e$ be the law of small time process $\mathrm{Y}(\e\cdot)$ on $\C([0,T];\H)$. Then, using Theorem \ref{thrm2}, we deduce the small time LDP for the corresponding stochastic equation to \eqref{3.16} with Hypothesis \ref{hyp2} (H.6) for the noise  coefficient $\B(\cdot,\cdot)$. Thus, we have the following theorem:
\begin{theorem}\label{thrm6}
	Under assumptions \ref{hyp1} and \ref{hyp2}, there exists a unique solution $\mathrm{Y}(\cdot)$ of the corresponding stochastic equation to \eqref{3.13} for the initial data $u_0\in\H$ and $\varrho_{u_0}^\e$ satisfies the LDP with the rate function $\I(\cdot)$ given in \eqref{1.12}. 
\end{theorem}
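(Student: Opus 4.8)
The plan is to check that the convection--diffusion system \eqref{3.16} falls under the hypotheses of Theorem \ref{thrm2} and then to invoke that theorem directly. Since $\H=\mathbb{L}^2(\mathbb{T}^n)$ and $\V=\mathbb{W}^{1,2}(\mathbb{T}^n)$ are separable Hilbert spaces, $\V$ is automatically $2$-smooth, $\gamma(\U,\V)=\L_2(\U,\V)$ with $\|\cdot\|_\gamma=\|\cdot\|_{\L_2(\U,\V)}$, and the whole abstract set-up of subsection \ref{ss1.4} applies with $\beta=2$ and the compact embedding $\V\hookrightarrow\H$ in force. As recorded after \eqref{3.18}, the continuity and linear growth of $\b$ together with the boundedness and uniform ellipticity \eqref{3.17} of $\a$ show that the operator $\A$ of \eqref{3.18} fulfills Hypothesis \ref{hyp3}, hence is hemicontinuous (H.1), coercive (H.3) (from $\sum_{i,j}a_{ij}\partial_iu\,\partial_ju\geq\delta|\nabla u|^2$ and Young's inequality applied to the $\b(u)\cdot\nabla u$ term, which gives $L_\A=\delta/2$), obeys the growth bound (H.4), and is pseudo-monotone on $\V$ for a.e.\ $t\in[0,T]$ (cf.\ Example 4.2, \cite{MRSSTZ}).

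The one assumption that needs the extra Lipschitz regularity of $\a$ and $\b$ is the local monotonicity. For $u_1,u_2\in\V$ put $w=u_1-u_2$ and split
\begin{align*}
\langle\A(u_1)-\A(u_2),w\rangle
&=-\int_{\mathbb{T}^n}\big(\a(u_1(x))\nabla w(x),\nabla w(x)\big)\,\d x
-\int_{\mathbb{T}^n}\big((\a(u_1(x))-\a(u_2(x)))\nabla u_2(x),\nabla w(x)\big)\,\d x\\
&\quad-\int_{\mathbb{T}^n}\big(\b(u_1(x))-\b(u_2(x)),\nabla w(x)\big)\,\d x .
\end{align*}
By \eqref{3.17} the first term is $\leq-\delta\|\nabla w\|_{\mathbb{L}^2}^2$; using the Lipschitz bounds $|\a(u_1)-\a(u_2)|\leq C|w|$ and $|\b(u_1)-\b(u_2)|\leq C|w|$, Hölder's inequality, the Gagliardo--Nirenberg inequality (cf.\ \eqref{3.11}), and Young's inequality, the remaining two terms are bounded by $\tfrac{\delta}{2}\|\nabla w\|_{\mathbb{L}^2}^2+\big(C+\rho(u_1)+\eta(u_2)\big)\|w\|_\H^2$ for suitable $\rho,\eta$ satisfying \eqref{1.3}. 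Combining this with $\|\B(t,u_1)-\B(t,u_2)\|_{\L_2}^2\leq L_\B\|w\|_\H^2$ (globally Lipschitz noise from $\H$) gives Hypothesis \ref{hyp2} (H.2)$^*$ with any $\xi\in(0,\delta/2)$ — hence also Hypothesis \ref{hyp1} (H.2); and (H.5), (H.6) (i.e.\ \eqref{1.09}, \eqref{1.10}, \eqref{1.11}) follow at once from $\B$ being globally Lipschitz and of linear growth from $\H$ into $\L_2(\U,\H)$ and from $\V$ into $\L_2(\U,\V)=\gamma(\U,\V)$, with constant (hence $\L^\infty$) functions $g_1,h,L_\B$.

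With Hypotheses \ref{hyp1} and \ref{hyp2} verified and $\V\hookrightarrow\H$ compact, Theorem \ref{thrm1} (equivalently Example 4.2, \cite{MRSSTZ} together with Theorem 3.1, \cite{MHTZ}) yields the unique probabilistically strong solution $\mathrm{Y}(\cdot)$ with the energy estimate \eqref{1.8}; applying Theorem \ref{thrm2} to $\mathrm{Y}(\cdot)$ shows that the law $\varrho_{u_0}^\e$ of the rescaled process $\mathrm{Y}(\e\cdot)$ on $\C([0,T];\H)$ satisfies the LDP with rate function $\I(\cdot)$ of \eqref{1.12}.

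The hard part is the verification of (H.2)$^*$: one must produce the coercive gap $\xi\|w\|_\V^\beta$ while dominating the cross term $(\a(u_1)-\a(u_2))\nabla u_2$ and the drift-difference term $\b(u_1)-\b(u_2)$ by an expression of the admissible form $(f(t)+\rho(u_1)+\eta(u_2))\|w\|_\H^2$ with $\rho,\eta$ obeying \eqref{1.3}; once that balancing is done, everything else is a routine transcription of the scalar conditions on $\a,\b$ through the Sobolev embeddings, exactly as in Example 4.2 of \cite{MRSSTZ}, and the conclusion is immediate from Theorem \ref{thrm2}.
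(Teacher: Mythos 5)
Your proposal follows the same high-level route as the paper: verify that the convection--diffusion operator satisfies Hypotheses \ref{hyp1} and \ref{hyp2}, then invoke Theorem \ref{thrm2}, using that $\V=\mathbb{W}^{1,2}(\mathbb{T}^n)$ is a Hilbert (hence $2$-smooth) space, that $\gamma(\U,\V)=\L_2(\U,\V)$, and that the noise hypotheses follow from global Lipschitz continuity. This part matches the paper, which itself disposes of (H.1), (H.3), (H.4), pseudo-monotonicity and solvability by citing Example 4.2 of \cite{MRSSTZ} and Theorem 3.1 of \cite{MHTZ}.

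Where your argument has a genuine gap is exactly where you flag it and then wave at: the verification of (H.2)$^*$. After the decomposition you write down, the cross term
\[
\int_{\mathbb{T}^n}\big((\a(u_1)-\a(u_2))\nabla u_2,\nabla w\big)\,\d x,
\]
whose integrand is bounded by $C\,|w|\,|\nabla u_2|\,|\nabla w|$, must be dominated by $\tfrac{\delta}{2}\|\nabla w\|_{\mathbb{L}^2}^2+(f(t)+\rho(u_1)+\eta(u_2))\|w\|_\H^2$ with $\rho,\eta$ obeying \eqref{1.3}. With $u_2\in\V$ we only control $\nabla u_2\in\mathbb{L}^2$, so any H\"older split that reserves $\|\nabla w\|_{\mathbb{L}^2}$ for Young's inequality forces either $\nabla u_2\in\mathbb{L}^q$ with $q>2$ (unavailable when $n\geq 2$) or $w\in\mathbb{L}^\infty$ (for $n=1$); in the latter case Gagliardo--Nirenberg plus Young produces $\eta(u_2)\sim\|u_2\|_\V^4$, which is \emph{not} admissible since here $\beta=2$ and \eqref{1.3} caps $\eta$ at $C(1+\|u_2\|_\V^2)(1+\|u_2\|_\H^\zeta)$. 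The claim that "Gagliardo--Nirenberg and Young" close this estimate is therefore not correct as stated; it goes through only if $\a$ is constant (so the cross term vanishes and only the $\b$-difference, controlled by $\epsilon\|\nabla w\|_{\mathbb{L}^2}^2+C_\epsilon\|w\|_\H^2$, remains). The paper does not verify (H.2)$^*$ explicitly either — it leaves the strong solvability to the external references and tacitly assumes (H.2)$^*$ when applying Theorem \ref{thrm2} — so your proposal is not at odds with the paper's route, but the step you single out as "the hard part" is in fact an unresolved gap, not a routine transcription.
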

\subsection{2D Liquid crystal model}\label{ss3.5} The elementary form of the hydrodynamics of liquid crystals is a simplified version of Ericksen-Leslie system with Ginzburg-Landau approximation, which is discussed in \cite{FLCL}. We consider the following model in the bounded domain $\1$ of $\R^2$ with smooth boundary $\partial\1$, 
\begin{equation}\label{3.19}
	\left\{	\begin{aligned}
		\partial_t \u(t)&=\Delta\u(t)-(\u(t)\cdot\nabla)\u(t)-\nabla p-\nabla \cdot(\nabla \n(t)\otimes\nabla\n(t)),    \text{ in }  (0,T)\times\1,\\
		\nabla\cdot\u(t)&=0,\ \text{ on } \ (0,T)\times\partial\1,\\
		\partial_t\n(t)&=\Delta\n(t)-(\u(t)\cdot\nabla)\n(t)-\Phi(\n(t)), \ \text{ in } \ (0,T)\times\1,\\
		\u&=0, \ \text{ on } [0,T)\times\partial\1,\\ \frac{\partial \n}{\partial \nu }&=0,\text{  on  } [0,T)\times\partial \1,\\
		\big(\u(0),&\n(0)\big)=\big(\u_0,\n_0\big), \ \text{ in } \ \1,
	\end{aligned}
	\right.
\end{equation}where $\u:[0,T]\times\1\to\R^2$ is the velocity, $p:[0,T]\times\1\to\R$ is the pressure, $\n:[0,T]\times\1\to\R^3$ is the director field of the liquid crystal molecules and $\nu$ is the outward unit normal vector on the boundary $\partial\1$. The symbol $\nabla \n\otimes\nabla\n$ denotes a $2\times2$ matrix whose entries are given by 
\begin{align*}
	(\nabla \n\otimes\nabla\n)_{i,j}=\sum_{k=1}^3(\partial_in_k)(\partial_jn_k).
\end{align*}Furthermore, we assume that $\Phi:\R^2\to\R^3$ satisfies the following conditions: there exists a $k$ degree polynomial $\psi:[0,\infty)\to\R$ for some $k\in\N$, such that 
\begin{align*}
	\Phi(\n)=\psi(|\n|^2)\n=\bigg(\sum_{i=0}^ka_i|\n|^{2i}\bigg)\n,
\end{align*}where $a_i\in \R$ for $i=1,\ldots,k-1$ and $a_k>0$. 

Let $\V=\{\u\in\H^1(\1)^2:\nabla\cdot\u=0,\ \u\big|_{\partial\mathcal{O}}=0\}$ and $\H$ be closure of $\V$ under the $\mathbb{L}^2$-norm $\|\u\|_\H^2:=\displaystyle\int_\1|\u(x)|^2\d x$. Set
\begin{align*}
	\mathcal{H}:=\H\times\H^1(\1)^3 \ \text{ and } \ \mathcal{V} :=\V\times \left\{\n\in\H^2(\1)^3:\frac{\partial \n}{\partial \nu }=0\right\},
\end{align*}with the norms in $\mathcal{H}$ and $\mathcal{V}$ denoted  by 
\begin{align*}
	\|\Y\|_\mathcal{H}^2:=\|\u\|_\H^2+\|\n\|_{\H^1}^2,\ \text{ and } \  	\|\Y\|_\mathcal{V}^2:=\|\u\|_\V^2+\|\n\|_{\H^2}^2, 
\end{align*}for $\Y=(\u,\n)$, respectively and $\langle\cdot,\cdot\rangle_{\mathcal{V}^*\times\mathcal{V}}$ denotes the duality pairing between $\mathcal{V}$ and $\mathcal{V}^*$. Thus, we have a Gelfand triplet $\mathcal{V}\hookrightarrow\mathcal{H}\hookrightarrow\mathcal{V}^*$ with the compact embedding $\mathcal{V}\hookrightarrow\mathcal{H}$. 

One should note that 
\begin{align}\label{3.20}
	\nabla\cdot(\nabla \n\otimes\nabla\n)=\frac{1}{2}\nabla(|\nabla\n|^2)+\nabla\n\cdot\Delta\n.
\end{align}Let us define the Helmholtz-Leray projection $\mathrm{P}_\H$ from $\mathbb{L}^2(\1)^2$ to $\H$, and define
\begin{equation}\label{3.21}
	\A(\Y):=\left( \begin{aligned}\mathrm{P}_\H\big[&\Delta \u-(\u\cdot\nabla)\u-\nabla\n\cdot\Delta\n\big]\\ &\Delta\n-(\u\cdot\nabla)\u-\psi(\n)
		\end{aligned}\right).
\end{equation}The operator $\A:\mathcal{V}\to\mathcal{V}^*$ satisfies Hypothesis \ref{hyp1}  (H.1)-(H.4), since (cf. Example 4.5, \cite{MRSSTZ})
\begin{align*}
	\|\A(\Y)\|_{\mathcal{V}^*}^2\leq C\big(1+\|\Y\|_{\mathcal{H}}^{4k+2}\big)\|\Y\|_{\mathcal{V}^*}^2, \ \text{ and } \ 	\langle \A(\Y),\Y\rangle_{\mathcal{V}^*\times\mathcal{V}}  \leq -\|\Y\|_\mathcal{V}^2+C\|\Y\|_\mathcal{H}^2.
\end{align*}Also, we have
\begin{align*}
	&	\langle \A(\Y_1)-\A(\Y_2),\Y_1-\Y_2\rangle_{\mathcal{V}^*\times\mathcal{V}} \leq -\frac{1}{2}\|\Y_1-\Y_2\|_\mathcal{V}^2+C\big(1+\rho(\Y_1)+\eta(\Y_2)\big)\|\Y_1-\Y_2\|_\mathcal{H}^2,
\end{align*}where $\rho(\cdot)$ and $\eta(\cdot)$ are such that 
\begin{align*}
	|\rho(\Y_1)|\leq \|\Y_1\|_\mathcal{H}^{4k}, \ \ \text{ and } \ \ |\eta(\Y_2)|\leq \|\Y_2\|_\mathcal{H}^{4k}+\|\Y_2\|_\mathcal{H}^2\|\Y_2\|_\mathcal{V}^2.
\end{align*}
Thus, the stochastic counterpart to the system \eqref{3.19} has a unique \textsf{probabilistically strong solution} under Hypothesis \ref{hyp1} (for more details see Example 4.5, \cite{MRSSTZ}).
\begin{remark}[Remark 4.6, \cite{MRSSTZ}]\begin{enumerate}\item 
	The authors in the work \cite{ZBEHPAR} considered the stochastic counterpart to the system \eqref{3.19} with the noise in the equation for $\u$ depending only on $\u$  in the It\^o sense, with linear multiplicative noise only depending on $\n$ in the Stratonovich sense in the equation for $\n$. Using Theorem 2.6, \cite{MRSSTZ}, we obtain the existence and uniqueness of a \textsf{probabilistically strong solution} to stochastic 2D liquid crystal equations perturbed by multiplicative noise which can depend on both $\u$ and $\n$.
	\item  If we assume $n:\1\to\R$ and $\Phi:\R\to\R$ are scalar functions appearing in the system \eqref{3.19}, then the corresponding system is called \emph{Allen-Cahn-Navier-Stokes equations (ACNSEs)}, which describe the motion of mixture of two incompressible viscous fluids. For more details one can see \cite{CLJS,TTM}, etc. and references therein. ACNSEs is also related to the magneto-hydrodynamics equations (MHDEs), that is the Navier-Stokes equations coupled with the Maxwell equations. In particular, for two dimensions and the nonlinear term $\Phi(n)=0$, the resultant model is equivalent to MHDEs (see \cite{XXLZCL}). The well-posedness of both models, that is, ACNSEs and MHDEs are covered by the formulation in \cite{MRSSTZ}, and hence it falls into the framework discussed in this paper. 
	\end{enumerate}
\end{remark} 

Let $\varrho^\e$ be the law of the small time process $\Y(\e\cdot)$ on $\C([0,T];\mathcal{H})$. Then, using our main theorem (Theorem \ref{thrm2}), we obtain the small time LDP for the corresponding stochastic equation to \eqref{3.19} with Hypothesis \ref{hyp2} (H.6) for the noise  coefficients $\B(\cdot,\cdot)$. Thus, we have the following theorem:
\begin{theorem}\label{thrm7}
	Assume that Hypotheses \ref{1.1} and \ref{hyp2} hold and the embedding $\mathcal{V}\hookrightarrow\mathcal{H}$ is compact. Then, there exists a unique solution $\Y(\cdot)$ of the corresponding stochastic equation to \eqref{3.19} for the initial data $(\u_0,\n_0)\in\mathcal{H}$ and $\varrho_{\u_0,\n_0}^\e$ satisfy the LDP with the rate function $\I(\cdot)$ given in \eqref{1.12}. 
\end{theorem}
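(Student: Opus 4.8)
The plan is to obtain Theorem \ref{thrm7} as a direct consequence of the abstract result Theorem \ref{thrm2}, so the task reduces to checking that the concrete operator $\A$ defined in \eqref{3.21} together with an admissible noise coefficient $\B$ fit into the framework of Section \ref{Sec2}. First I would observe that $\mathcal{V}$ is a separable Hilbert space and therefore automatically a $2$-smooth Banach space, so that $\gamma(\U,\mathcal{V})$ is precisely the space of Hilbert--Schmidt operators from $\U$ to $\mathcal{V}$ with $\|\cdot\|_\gamma=\|\cdot\|_{\L_2}$; moreover $\mathcal{V}\hookrightarrow\mathcal{H}\hookrightarrow\mathcal{V}^*$ is a Gelfand triple with dense and compact embeddings. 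Thus the hypotheses of Theorems \ref{thrm1} and \ref{thrm2} are meaningful here, with the monotonicity/coercivity exponent taken to be $\beta=2$.

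Next I would verify Hypotheses \ref{hyp1} and \ref{hyp2} for the liquid crystal operator. Hemicontinuity (H.1) is immediate from the bilinear structure of the Navier--Stokes and transport terms together with the polynomial nonlinearity $\Phi$. The coercivity estimate $\langle\A(\Y),\Y\rangle_{\mathcal{V}^*\times\mathcal{V}}\leq-\|\Y\|_\mathcal{V}^2+C\|\Y\|_\mathcal{H}^2$ and the growth estimate $\|\A(\Y)\|_{\mathcal{V}^*}^2\leq C(1+\|\Y\|_\mathcal{H}^{4k+2})\|\Y\|_\mathcal{V}^2$, quoted from Example~4.5 of \cite{MRSSTZ}, give (H.3) and (H.4) with $\beta=2$, $L_\A=1$ and $\alpha=4k+2$, while the local monotonicity estimate displayed just before the statement yields (H.2) and (H.5) with $\rho,\eta$ as recorded there and $\B$ assumed Lipschitz from $\mathcal{H}$ to $\L_2(\U,\mathcal{H})$. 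For the strengthened condition (H.2)$^*$ I would keep the coercive gain, which after multiplication by $2$ appears as $-\|\Y_1-\Y_2\|_\mathcal{V}^2$, and note that for any $\xi\in(0,1)$ one has $-\|\Y_1-\Y_2\|_\mathcal{V}^2+\xi\|\Y_1-\Y_2\|_\mathcal{V}^2\leq0$, so that the additional term $\xi\|\Y_1-\Y_2\|_\mathcal{V}^\beta$ with $\beta=2$ is absorbed; condition (H.6), i.e.\ \eqref{1.09}--\eqref{1.11} with $\gamma=\L_2$, is then simply a standing assumption on $\B$.

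Finally, with Hypotheses \ref{hyp1} and \ref{hyp2} in hand, Theorem \ref{thrm1} supplies the unique probabilistically strong solution $\Y(\cdot)$ for any $(\u_0,\n_0)\in\mathcal{H}$ and the energy estimate \eqref{1.8}, and Theorem \ref{thrm2} then yields the small time LDP for $\varrho_{\u_0,\n_0}^\e$ on $\C([0,T];\mathcal{H})$ with rate function $\I(\cdot)$ given in \eqref{1.12}. Since the theorem already assumes the abstract hypotheses, there is no genuine obstacle beyond this bookkeeping; the only delicate point worth flagging is that the growth bound \eqref{1.10}, $\|\B(t,\u)\|_\gamma^2\leq h(t)(1+\|\u\|_\mathcal{V}^2)$, forces the noise to act with sufficient regularity in the $\mathcal{V}$-norm and hence restricts the admissible class of $\B$, and that the heavy analytic input, namely the pseudo-monotonicity of $\A$ with these particular functions $\rho,\eta$, is carried out in \cite{MRSSTZ} and is only invoked here.
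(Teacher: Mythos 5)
Your proposal is correct and follows the same route the paper takes: identify $\mathcal{V}$ as a separable Hilbert (hence $2$-smooth) space with $\gamma=\L_2$, read off $\beta=2$, $\alpha=4k+2$, $\zeta=4k$ from the coercivity, growth and local-monotonicity bounds quoted from Example~4.5 of \cite{MRSSTZ}, note that the coercive gain $-\frac{1}{2}\|\cdot\|_\mathcal{V}^2$ absorbs the extra $\xi\|\cdot\|_\mathcal{V}^\beta$ term needed for (H.2)$^*$, and then invoke Theorem~\ref{thrm1} for well-posedness and Theorem~\ref{thrm2} for the small-time LDP. The paper leaves the verification of (H.2)$^*$ implicit whereas you spell it out, but this is a difference of exposition, not of method.
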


\medskip\noindent
\textbf{Acknowledgments:} The first author would like to thank Ministry of Education, Government of India - MHRD for financial assistance. M. T. Mohan would  like to thank the Department of Science and Technology (DST), India for Innovation in Science Pursuit for Inspired Research (INSPIRE) Faculty Award (IFA17-MA110).

\medskip\noindent	{\bf  Declarations:} 

\noindent 	{\bf  Ethical Approval:}   Not applicable 

\noindent  {\bf   Competing interests: } The authors declare no competing interests. 

\noindent 	{\bf   Authors' contributions: } All authors have contributed equally. 

\noindent 	{\bf   Funding: } DST, India, IFA17-MA110 (M. T. Mohan).

\noindent 	{\bf   Availability of data and materials: } Not applicable.

\end{document}